\crefname{equation}{}{}
\newtheorem{lemma}{Lemma}[section]
\newtheorem{theorem}[lemma]{Theorem}
\newtheorem{corollary}[lemma]{Corollary}
\newtheorem{setting}[lemma]{Setting}
\newcommand{\1}{\ensuremath{\mathbbm{1}}}
\providecommand{\N}{{\ensuremath{\mathbbm{N}}}}
\providecommand{\Z}{{\ensuremath{\mathbbm{Z}}}}
\providecommand{\R}{{\ensuremath{\mathbbm{R}}}}
\providecommand{\E}{{\ensuremath{\mathbb{E}}}}
\renewcommand{\P}{{\ensuremath{\mathbb{P}}}}
\newcommand{\funcF}{F}
\newcommand{\LipConst}{L} 
\newcommand{\uniform}{\ensuremath{\mathcal{R}}}
\newcommand{\unif}{\ensuremath{\mathfrak{r}}}
\newcommand{\cost}{\ensuremath{\operatorname{Cost}}}
\newcommand{\Exp}[1]{ \E \! \left[ #1 \right]}
\newcommand{\EXP}[1]{ \E  [ #1 ]}
\newcommand{\EXPP}[1]{ \E \big[ #1 \big]}
\newcommand{\EXPPP}[1]{ \E \Big[ #1 \Big]}
\newcommand{\start}{\ensuremath{\xi}}
\newcommand{\fwBM}{\ensuremath{\xi+\mathbf{W}}}
\newcommand{\fwbm}{\ensuremath{\mathbf{W}}} 
\newcommand{\crbm}{\ensuremath{\mathbb{W}}} 
\newcommand{\RN}{\operatorname{RV}}
\title{Overcoming the curse of dimensionality in\\ 
the numerical approximation of semilinear\\ parabolic partial differential equations}
\author{Martin Hutzenthaler, Arnulf Jentzen, Thomas Kruse,\\
Tuan Anh Nguyen, and Philippe von Wurstemberger}
\begin{document}

\maketitle
\makeatletter
\let\@makefnmark\relax
\let\@thefnmark\relax
\@footnotetext{\emph{AMS 2010 subject classification:} 60H35; 65C30; 65M75}
\@footnotetext{\emph{Key words and phrases:} curse of dimensionality, high-dimensional PDEs, information based complexity, 
tractability of multivariate problems, high-dimensional semilinear BSDEs, multilevel Picard approximations, multilevel Monte Carlo method}
\makeatother

\begin{abstract}
For a long time it is well-known that high-dimensional linear parabolic partial differential equations (PDEs) can be approximated by Monte Carlo methods with a computational effort which grows polynomially both in the dimension and in the reciprocal of the prescribed accuracy. In other words, linear PDEs do not suffer from the curse of dimensionality. For general semilinear PDEs with Lipschitz coefficients, however, it remained an open question whether these suffer from the curse of dimensionality. In this paper we partially solve this open problem. More precisely, we prove in the case of semilinear heat equations with gradient-independent and globally Lipschitz continuous nonlinearities that the computational effort of a variant of the recently introduced multilevel Picard approximations grows polynomially both in the dimension and in the reciprocal of the required accuracy.
\end{abstract}

\tableofcontents

\section{Introduction and main results}

Parabolic partial differential equations (PDEs) are a fundamental tool
in applied mathematics for modelling phenomena in engineering, natural sciences, and man-made complex systems.
For instance, semilinear PDEs appear in derivative pricing models
which incorporate nonlinear risks such as default risks, interest rate risks, or liquidity risks,
and PDEs are employed to model reaction diffusion systems in chemical engineering.
The PDEs appearing in the above examples are often
high-dimensional
where the dimension corresponds to the number of financial assets such as stocks, commodities, interest rates, or exchange rates in the involved hedging portfolio.

In the literature, there exists no result which shows that
essentially any of
the high-dimensional semilinear PDEs appearing in the above mentioned applications 
can efficiently be solved approximately.
More precisely,
to the best of our knowledge,
there exists no result in the literature which shows in the case of general semilinear PDEs with
globally Lipschitz continuous coefficients that
the computational effort of an approximation algorithm grows at most polynomially
in both the PDE dimension and the reciprocal of the prescribed approximation accuracy.
In this sense no numerical algorithm is known to not suffer from the so-called curse of dimensionality,
see also the discussion after Theorem~\ref{intro_thm} below for details.

In this work we overcome the curse of dimensionality in the numerical approximation of semilinear heat equations with gradient-independent and globally Lipschitz continuous nonlinearities. As approximation algorithm we analyze a variant of the recently introduced multilevel Picard (MLP) approximations in E et al.~\cite{EHutzenthalerJentzenKruse2016}, see \eqref{intro_thm:ass2} below for the method and the paragraph after \cref{intro_thm} below for a motivation hereof.  
The main result of this article (\cref{intro_thm} below) 
shows in the case of general semilinear heat equations with gradient-independent and globally Lipschitz continuous nonlinearities
that the computational effort of the proposed approximation algorithm grows at most polynomially in both the PDE dimension $ d \in \N $ 
and the reciprocal of the required approximation accuracy $ \varepsilon > 0 $. 
More specifically, Theorem~\ref{thm1} below proves for every arbitrarily small $ \delta \in (0,\infty)$ 
that there exists $ C \in (0,\infty) $
such that for every PDE dimension $ d \in \N $ 
we have that 
the computational cost of the proposed approximation algorithm 
(see \cref{intro_thm:ass2} below) 
to achieve an approximation accuracy of size $ \varepsilon > 0 $ 
is bounded by $ C d^{1 + p(1+\delta) } \varepsilon^{-2(1+\delta)} $,
where the parameter $p \in [0,\infty)$ corresponds to the polynomial growth of the terminal 
condition and the nonlinearity of the PDE under consideration (see \cref{intro_thm} below for details).
This is essentially (up to an arbitrarily small real number $\delta \in (0,\infty)$)
the same computational complexity as the 
plain vanilla Monte Carlo algorithm 
(see, e.g., \cite{g08a,h98,h01,cdmr09,GrahamTalay2013Stochastic_simulation_and_Monte_Carlo_methods}) 
achieves in the case of \emph{linear} heat equations. 
%
%
In particular, in the language of 
information-based complexity 
this work proves, for the first time,
that general semilinear heat equations with gradient-independent 
and globally Lipschitz continuous nonlinearities
and polynomially growing terminal conditions 
are polynomially tractable 
in the setting of stochastic approximation algorithms 
(cf., e.g., Novak \& Wozniakowski~\cite[Chapter~1]{NovakWozniakowski2008} and  Novak \& Wozniakowski~\cite[Chapter~9]{NovakWozniakowski2010})
To illustrate the contribution
of this article, we now present 
in the following result, 
\cref{intro_thm} below, 
a special case of Theorem~\ref{thm1} below, which is 
the main result of article.
\begin{theorem}
\label{intro_thm}
Let $T \in (0,\infty)$, $L, p \in [0, \infty)$, $\Theta = \cup_{n = 1}^\infty \Z^n$, 
let $\xi_d \in \R^d$, $d \in \N$, satisfy 
$ \sup_{ d \in \N } \| \xi_d \|_{ \R^d } < \infty $, 
let $g_d  \colon \R^d \to \R$, $d \in \N$, and
$f_d \colon [0,T] \times \R^d \times \R \to \R$, $d \in \N$, be continuous functions which satisfy 
for all $ t \in [0, T]$, $d \in \N$, $ x \in \R^d$, $v, w \in \R$ that 
$
  |f_d(t,x,0)| 
  +  
  | g_d(x) |
\leq 
  L (1 + \| x \|_{\R^d}^p)
$
and
$
  |f_d(t, x, v) - f_d(t, x, w)|
\leq
  L
  | v - w |
$,
let $ ( \Omega, \mathcal{F}, \P ) $ be a probability space, 
let $W^{d,\theta} \colon [0,T] \times \Omega \to \R^d$, $\theta \in \Theta$, $d \in \N$, be independent standard Brownian motions,
let 
$
  \uniform^\theta \colon [0,T] \times \Omega\to \R
$, $\theta\in\Theta$, 
be i.i.d.\  continuous stochastic processes which satisfy 
for all $t\in [0,T]$, $\theta\in \Theta$ that $\uniform^\theta _t \in [t, T]$ is uniformly distributed on $[t, T]$, 
assume that
$(\uniform^{\theta})_{\theta \in \Theta}$ and $(W^{d,\theta})_{\theta \in \Theta, d \in \N}$ are independent,
let
$ 
  {U}_{ n,M}^{d, \theta} \colon [0,T] \times \R^d \times \Omega \to \R
$, $n, M\in\Z$, $\theta\in\Theta$, $d \in \N$,
satisfy
for all $d, n,M \in \N$, $\theta\in\Theta $, 
$ t \in [0,T]$, $x\in\R^d $
that $U_{-1,M}^{d, \theta}(t,x)=
U_{0,M}^{d, \theta}(t,x)=0$ and 
\begin{align}
\label{intro_thm:ass2}
&
  U_{n,M}^{d, \theta}(t,x)
  =
  \Bigg[
  \sum_{l=0}^{n-1}\tfrac{(T-t)}{M^{n-l}}\sum_{i=1}^{M^{n-l}}
  f_d \big(
    \uniform_t^{(\theta,l,i)},
    x+W_{\uniform_t^{(\theta,l,i)} - t}^{d, (\theta,l,i)}, 
    U_{l,M}^{d, (\theta,l,i)}(\uniform_t^{(\theta,l,i)},x+W_{\uniform_t^{(\theta,l,i)} - t}^{d, (\theta,l,i)} )
  \big) 
\\
&
  -
  \1_{\N}(l)
  f_d \big(
    \uniform_t^{(\theta,l,i)},
    x+W_{\uniform_t^{(\theta,l,i)} - t}^{d, (\theta,l,i)}, 
    U_{l-1,M}^{d, (\theta,-l,i)}(\uniform_t^{(\theta,l,i)},x+W_{\uniform_t^{(\theta,l,i)} - t}^{d, (\theta,l,i)})
  \big)
  \Bigg]
  +
  \sum_{i=1}^{M^n} \frac{g_d(x+W^{d, (\theta,0,-i)}_{T-t})}{M^n},
\nonumber
\end{align}
and 
for every $d, n \in \N$, $\theta \in \Theta$, $t \in [0, T]$, $x \in \R^d$ 
let $\cost_{d, n} \in \N$ be the number of realizations of scalar standard normal random variables
which are used to compute one realization of $U_{n,n}^{d, \theta}(t,x)$ (see \cref{thm1:ass1} below for a precise definition).
Then
\begin{enumerate}[(i)]
\item
for every $d \in \N$ there exists a unique at most polynomially growing continuous function $u_d \colon [0,T] \times \R^d \to \R$ 
which is a viscosity solution of
\begin{equation}
\label{intro_thm:concl1}
\begin{split}
	(\tfrac{\partial }{\partial t} u_d)(t,x) 
	+\tfrac{1}{2} 
	(\Delta_x u_d )(t,x)
	+
	f_d(t, x, u_d(t, x))
=
	0
\end{split}
\end{equation}
with
$
  u_d(T,x) = g_d(x)
$
for $ t \in (0,T) $, $ x \in \R^d $
and

\item
for every $\delta \in (0,\infty)$ there exist $n \colon \N \times (0, 1] \to \N$ and $C \in (0,\infty)$ such that
for all $d \in \N$, $\varepsilon \in (0,1]$ it holds that
$
  \cost_{d, n_{d, \varepsilon}}
\leq
  C d^{1 + p(1+\delta) } \varepsilon^{-2(1+\delta)}
$
and
\begin{equation}
\label{intro_thm:concl2}
    \big(
      \E \big[ | u_d(0, \xi_d)  -  U^{d,0}_{n_{d, \varepsilon}, n_{d, \varepsilon}} (0, \xi_d)|^2  \big]
    \big)^{\nicefrac{1}{2}}
\leq 
  \varepsilon.
\end{equation}
\end{enumerate}
\end{theorem}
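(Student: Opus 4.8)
The plan is to prove the two assertions separately, with the bulk of the work devoted to the complexity-and-error estimate in part~(ii).

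For part~(i), existence and uniqueness of an at most polynomially growing continuous viscosity solution $u_d$ of the semilinear heat equation \eqref{intro_thm:concl1} with terminal condition $g_d$ follows from the classical stochastic representation: under the standing assumptions on $f_d$ and $g_d$ (global Lipschitz continuity in the $v$-variable and polynomial growth of $f_d(\cdot,\cdot,0)$ and $g_d$), the associated backward stochastic differential equation, or equivalently the fixed point equation
\begin{equation}
\label{proof:fixedpoint}
  u_d(t,x)
=
  \Exp{ g_d(x + W^{d,0}_{T-t}) }
  +
  \int_t^T \Exp{ f_d\big( s, x + W^{d,0}_{s-t}, u_d(s, x + W^{d,0}_{s-t}) \big) } \, ds
  ,
\end{equation}
admits a unique solution in the space of at most polynomially growing continuous functions by a Banach fixed point argument (in a suitably weighted norm, or via Picard iteration with a Gronwall-type estimate to absorb the Lipschitz constant). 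That this fixed point is the viscosity solution of \eqref{intro_thm:concl1} is a standard Feynman–Kac / BSDE result; I would cite the corresponding statement from the earlier part of the paper (or from \cite{EHutzenthalerJentzenKruse2016}) rather than reprove it.

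For part~(ii), the strategy is a bias–variance decomposition of the multilevel Picard scheme \eqref{intro_thm:ass2} combined with a careful bookkeeping of computational cost. First I would establish, by induction on $n$ using \eqref{proof:fixedpoint}, that $U_{n,M}^{d,\theta}(t,x)$ is an unbiased-in-the-telescoping-sense approximation whose $L^2$-distance to $u_d(t,x)$ satisfies a recursive inequality of the form
\begin{equation}
\label{proof:recursion}
  \sup_{t \in [0,T], x \in \R^d}
  \frac{ \big( \Exp{ | u_d(t,x) - U_{n,M}^{d,\theta}(t,x) |^2 } \big)^{\nicefrac12} }{ 1 + \| x \|_{\R^d}^p + \ldots }
\leq
  \sum_{l=0}^{n-1} \frac{ c \, \LipConst }{ M^{(n-l)/2} }
  \Big( \text{error at level } l \Big)
  +
  \frac{ c \, \LipConst\, (1 + \sqrt d \,)^{\ldots} }{ M^{n/2} }
  ,
\end{equation}
where the $M^{-(n-l)/2}$ factors come from the Monte Carlo averaging over $M^{n-l}$ i.i.d.\ samples and the independence of the increments across the tree of indices $\Theta$, and the $\sqrt d$-type terms come from the Gaussian moments $\Exp{ \| W^{d}_{T-t} \|_{\R^d}^{2p} }$ controlling the polynomial growth. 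Solving this recursion (this is the technical heart, and I expect it to mirror the generating-function / explicit-bound computations in \cite{EHutzenthalerJentzenKruse2016}) gives a bound of the form $\big( \Exp{|u_d(t,x) - U_{n,n}^{d,\theta}(t,x)|^2} \big)^{\nicefrac12} \le C_\delta\, d^{(1+\delta)p/2}\, (1 + \| x \|^p)\, n^{-n/2}(1+\ldots)$, so that choosing $n = n_{d,\varepsilon}$ of order roughly $\log(\varepsilon^{-1}) + \log d$ makes the right-hand side at most $\varepsilon$. Second, I would bound $\cost_{d,n}$ by solving the cost recursion implicit in \eqref{intro_thm:ass2}: one evaluation of $U_{n,n}^{d,\theta}$ triggers $\sum_{l=0}^{n-1} n^{n-l}$ pairs of evaluations of $U_{l,n}$ and $U_{l-1,n}$, plus $n^n$ evaluations of $g_d$, each consuming $O(d)$ standard normals; unwinding this yields $\cost_{d,n} \le c\, d\, n^n\, (1 + n)^{\ldots} = c\, d\, (cn)^n$ up to polynomial-in-$n$ factors.

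Finally I would combine the two estimates: with $n_{d,\varepsilon} \asymp \log(d\,\varepsilon^{-1})$, the cost $c\,d\,(c n_{d,\varepsilon})^{n_{d,\varepsilon}}$ and the bias–variance bound $d^{(1+\delta)p/2}(\text{something})^{-n_{d,\varepsilon}/2} \le \varepsilon$ have to be matched; the key elementary fact is that for any $\delta \in (0,\infty)$ there is $C \in (0,\infty)$ with $(c n)^n \le C\, \alpha^{2n}$ whenever $\alpha$ is chosen large enough relative to $n$ — more precisely one exploits that $n^{n} = \exp(n \log n)$ grows slower than any fixed exponential $\exp(\kappa n^2)$, so that the super-exolynomial-in-$n$ cost, once $n$ is only logarithmic in $d$ and $\varepsilon^{-1}$, translates into a genuinely polynomial-in-$d$-and-$\varepsilon^{-1}$ bound $C d^{1+p(1+\delta)} \varepsilon^{-2(1+\delta)}$. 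The main obstacle, and where the real care is needed, is the sharp resolution of the recursion \eqref{proof:recursion}: one must track simultaneously the propagation of the level-$l$ errors into the level-$n$ error, the exact powers of $d$ entering through the polynomial-growth weights (these must come out as $d^{p}$, not $d^{2p}$ or worse, to land the exponent $1 + p(1+\delta)$), and the interplay between the number of samples $M = n$ and the number of levels $n$ so that the final choice of $n_{d,\varepsilon}$ is only logarithmic. All other steps — the fixed point argument, the cost count, the final arithmetic — are comparatively routine once this recursion is under control.
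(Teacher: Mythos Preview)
Your outline for part~(i) is fine and matches the paper: existence and uniqueness of a polynomially growing continuous fixed point $u_d$ of \eqref{proof:fixedpoint} is exactly \cref{result:exist}, and the identification with the viscosity solution is the Feynman--Kac formula, which the paper also cites without reproving.

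For part~(ii), the overall architecture (bias--variance split, cost recursion, combine) is right, but the error recursion you wrote down has a genuine gap. In your display \eqref{proof:recursion} you take a sup over $t\in[0,T]$ and $x\in\R^d$ and claim every level-$l$ contribution carries a factor $M^{-(n-l)/2}$. That is true for the \emph{variance} part $U_{n,M}-\E[U_{n,M}]$, but the \emph{bias} $\E[U_{n,M}](t,x)-u(t,x)=\int_t^T\E[(F(U_{n-1,M})-F(u))(s,x+W_{s-t})]\,ds$ carries no Monte Carlo factor at all. In a space--time sup-norm this term gives only
\[
  e_n^{\text{bias}}\;\le\; L\,T\,c_{d,p}\,e_{n-1},
\]
where $c_{d,p}\asymp 1+(dT)^{p/2}$ comes from the Gaussian moment. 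Since $LT c_{d,p}$ is not $<1$ in general, the recursion you wrote cannot be solved to give a bound that decays with $n$; the reference to generating-function computations in \cite{EHutzenthalerJentzenKruse2016} does not help, because the input inequality is already too weak.

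The paper's key device, which you are missing, is a family of semi-norms
\[
  \|V\|_k^2=\frac{1}{T^k}\int_0^T\frac{t^{k-1}}{(k-1)!}\,\E\big[|V(t,\xi+\fwbm_t)|^2\big]\,dt,\qquad k\in\N_0,
\]
built from Brownian paths started at the single point $\xi$ (no sup over $x$). The crucial property (\cref{a26}) is that the Monte Carlo time-integral operator shifts the index: applying $(T-t)\,(\,\cdot\,)(\uniform_t,x+\crbm_{\uniform_t}-\crbm_t)$ sends $\|\cdot\|_k$ to at most $T\|\cdot\|_{k+1}$. Both bias and variance contributions therefore land in $\|\cdot\|_{k+1}$, and one obtains the clean recursion
\[
  \|U_{n,M}^0-u\|_k\;\le\;\tfrac{1}{\sqrt{k!\,M^n}}\,[\cdots]+\sum_{l=0}^{n-1}\tfrac{2LT}{\sqrt{M^{n-l-1}}}\,\|U_{l,M}^0-u\|_{k+1}.
\]
Introducing $\varepsilon_n=\sup\{M^{-j/2}\|U_{n,M}^0-u\|_k:j+n+k=N\}$ collapses this to a discrete Gronwall inequality $\varepsilon_n\le a_1+2LT\sum_{l<n}\varepsilon_l$, and the estimate $\sup_{k\le N}(k!\,M^{N-k})^{-1}\le e^M M^{-N}$ turns $\varepsilon_0$ and $a_1$ into $e^{M/2}M^{-N/2}$ times a constant. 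No generating functions are needed; the factorial weights hidden in $\|\cdot\|_k$ do all the work.

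Two smaller remarks. First, the paper only ever estimates the error at the single point $(0,\xi_d)$, which is exactly what the semi-norm $\|\cdot\|_0$ captures; your global weighted sup-norm is more than what is required and, as explained, does not close. Second, the final matching of cost and accuracy is done more directly in the paper than via an explicit logarithmic choice of $n$: one bounds $\cost_{d,N}\le d(5N)^N$ and multiplies by $\varepsilon_N^{2+\delta}\varepsilon_N^{-(2+\delta)}$, then uses the error bound $\varepsilon_N\le c\kappa^N N^{-N/2}$ to see that $d(5N)^N\varepsilon_N^{2+\delta}\le d\,c^{2+\delta}(5\kappa^{2+\delta})^N N^{-N\delta/2}$, which is bounded uniformly in $N$. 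This gives the exponent $d^{1+p(1+\delta)}\varepsilon^{-2(1+\delta)}$ without ever solving for $n_{d,\varepsilon}$.
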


\cref{intro_thm} is an immediate consequence of \cref{thm1} and Beck et al.\  \cite[Corollary~3.9]{beck2020nonlinear}. In \cref{intro_thm} and in the following presentations of this article we frequently use the standard norms on $\R^d$, $d\in \N$. In particular, we note that for all $d\in \N$, $v=(v_1,v_2,\ldots,v_d)\in \R^d$ it holds that $\|v\|_{\R^d}=\big[|v_1|^2+|v_2|^2+\ldots+|v_d|^2\big]^{1/2}$.
We now motivate the multilevel Picard approximations in \eqref{intro_thm:ass2}.
For this assume the setting of Theorem~\ref{intro_thm} and let $d\in\N$.
The Feynman-Kac formula then implies
that the exact solution $u_d$ of the PDE~\eqref{intro_thm:concl1} 
satisfies for all $t\in(0,T)$, $x\in\R^d$ that
\begin{equation}  
\label{eq:motivate_0}
\begin{split}
&
  u_d(t,x)
  =\E\!\left[g_d(x + W_{ T - t }^{d,0} )\right] 
  + \int_t^T\E\!\left[f_d(s,x+W_{s-t}^{d,0},u_d(s,x+W_{s-t}^{d,0}))\right] ds .
\end{split}     
\end{equation}
This is a fixed-point equation for $u_d$.
To this fixed-point equation
we apply the well-known Picard approximation method and a telescope sum
and let $u_{d,n}\colon[0,T]\times\R^d\to\R$, $n\in\Z$,
be
functions which satisfy for all $t\in[0,T]$, $x\in\R^d$, $n\in\N$ that
$u_{d,-1}(t,x)=u_{d,0}(t,x)=0$ and that
\begin{equation}  
\begin{split}
\label{eq:udn}
  &u_{d,n}(t,x)
  -\E\!\left[g_d(x+W_{T-t}^{d,0})\right] 
  \\&=
  \int_t^T\E\!\left[f_d(s,x+W_{s-t}^{d,0},u_{d,n-1}(s,x+W_{s-t}^{d,0}))\right]\,ds.
  \\&=
  \sum_{l=0}^{n-1}\int_t^T\E\Big[
  f_d(s,x+W_{s-t}^{d,0},u_{d,l}(s,x+W_{s-t}^{d,0}))
   -
  \1_{\N}(l)
  f_d(s,x+W_{s-t}^{d,0},u_{d,l-1}(s,x+W_{s-t}^{d,0}))
  \Big]\,ds.
\end{split}     
\end{equation}
Next we apply a multilevel Monte Carlo approach to the non-discrete expectations
and time integrals.
The crucial idea for this is that the summands on the right-hand side
of~\eqref{eq:udn} are cheap to calculate for small $l\in\N_0$
and are small for large $l\in\N_0$  since then $u_{d,l}-u_{d,l-1}$
is small. For this reason, for every $n\in\N$ we approximate the expectation
and the time integral on level $l\in\N_0$ with an average over
$M^{n-l}$ independent copies for the $n$-th approximation.
This motivates the
multilevel Picard approximations~\eqref{intro_thm:ass2}.
For more details on the derivation of the multilevel Picard approximations
see E et al.~\cite{EHutzenthalerJentzenKruse2016}.
The main difference between the approximation algorithms in~\eqref{intro_thm:ass2}
above and the approximation algorithms introduced in~\cite{EHutzenthalerJentzenKruse2016}
is that in this article we approximate time integrals by the Monte Carlo method
(inspired by~\cite{HenryLabordereOudjaneTanTouziWarin2016,Warin2018a})
instead of deterministic quadrature rules with fixed deterministic time grids
and this modification considerably simplifies the analysis
and allows us to establish \eqref{intro_thm:concl2} under merely Lipschitz continuity assumptions in the generality of Theorem~\ref{intro_thm} above. Roughly speaking, a key advantage of employing the Monte Carlo method instead of deterministic quadrature rules as in~\cite{EHutzenthalerJentzenKruse2016} is that the proposed approximation algorithms in~\eqref{intro_thm:ass2} are somehow unbiased with respect to the temporal variable in the sense that the biases of the proposed approximation algorithms in~\eqref{intro_thm:ass2} do not involve any temporal discretization error any more.

Next we relate Theorem~\ref{intro_thm} to results in the literature.
Classical deterministic methods such as finite elements or sparse grid
methods suffer from the curse of dimensionality.
Also methods based on backward stochastic differential equations
(introduced in~Pardoux \& Peng~\cite{PardouxPeng1990})
such as the Malliavin calculus based regression method
(introduced in~Bouchard \& Touzi~\cite{BouchardTouzi2004}),
the projection on function spaces method
(introduced in Gobet et al.~\cite{GobetLemorWarin2005}), cubature on Wiener space
(introduced in Crisan \& Manolarakis~\cite{CrisanManolarakis2010}),
or the Wiener chaos decomposition method 
(introduced in Briand \& Labart~\cite{BriandLabart2014})
have not been shown to
not suffer from the curse of dimensionality,
see Subsections 4.3--4.6 in E et al.~\cite{EHutzenthalerJentzenKruse2017}
for a more detailed discussion.
Moreover, recently a nested Monte Carlo method has been proposed
in Warin~\cite{Warin2018a,Warin2018b}. Simulations show that the nested
Monte Carlo method is efficient for non-large $T$ but
the method has not been shown to not suffer from the curse of dimensionality.
Branching diffusion methods
(cf., e.g., \cite{HenryLabordere2012,HenryLabordereTanTouzi2014,
HenryLabordereOudjaneTanTouziWarin2016, BouchardTanWarinZou2017})
exploit that solutions of semilinear PDEs with polynomial nonlinearities
are equal to expectations of certain functionals of
branching diffusion processes and these expectations are then
approximated by the Monte Carlo method.
Branching diffusion methods have been shown to not suffer from the curse
of dimensionality under
restrictive conditions on the initial value,
on the time horizon and on the nonlinearity; see, e.g., 
Henry-Labordere et al.~\cite[Theorem~3.12]{HenryLabordereOudjaneTanTouziWarin2016}.
If these conditions are not satisfied, then the approximations
have not been shown to not suffer from the curse of dimensionality
and simulations, e.g., for Allen-Cahn equations, indicate that the method fails to converge 
in this case.
Moreover, the multilevel Picard approximations introduced in E et al.~\cite{EHutzenthalerJentzenKruse2016} 
have been shown to not suffer from the curse of dimensionality
under very restrictive assumptions on the regularity of the exact solution;
see \cite{EHutzenthalerJentzenKruse2016,HutzenthalerKruse2017}. 
In addition, numerical simulations for 
deep learning based numerical approximation methods for PDEs 
(cf., for example, \cite{EHanJentzen2017CMStat,BeckEJentzen2017,EYu2018,
FujiiTakahashiTakahashi2017,HanJentzenE2017,HenryLabordere2017,Raissi2018,
SirignanoSpiliopoulos2017,BeckerCheriditoJentzen2018arXiv,Becketal2018}) 
indicate that such approximation methods 
seem to overcome 
the curse of dimensionality in the numerical approximation of nonlinear PDEs 
but there exist no rigorous mathematical results which demonstrate this conjecture. 
To the best of our knowledge, 
the scheme~\cref{intro_thm:ass2} in \cref{intro_thm} above
is the first numerical approximation scheme 
in the scientific literature for which it has been proven that it overcomes the curse of 
dimensionality in the numerical approximation of 
general gradient-independent semilinear heat PDEs.

After the preprint version of this article has been published, several follow-up research articles which are based on this work have appeared.
In particular, we refer to \cite{hutzenthaler2019bovercoming} for MLP approximations of the form \eqref{intro_thm:ass2} in the case of semilinear Kolmogorov PDEs involving a second order differential operator with varying coefficients instead of just the Laplacian, we refer to \cite{beck2019overcoming} for MLP approximations of the form \eqref{intro_thm:ass2} in the case of semilinear PDEs with non-globally Lipschitz continuous nonlinearities, we refer to \cite{hutzenthaler2019overcoming} for MLP approximations of the form \eqref{intro_thm:ass2} in the case of semilinear PDEs with gradient-dependent nonlinearities, we refer to \cite{beck2020overcoming} for MLP approximations of the form \eqref{intro_thm:ass2} in the case of semilinear elliptic PDEs, and we also refer to \cite{becker2020numerical} for several numerical simulations for MLP approximations of the form \eqref{intro_thm:ass2} in the case of Allen-Cahn PDEs (see \cite[Subsection~3.1]{becker2020numerical}), in the case of Sine-Gordon type PDEs (see \cite[Subsection~3.2]{becker2020numerical}), in the case of systems of semilinear heat PDEs (see \cite[Subsection~3.3]{becker2020numerical}), and in the case of semilinear Black-Scholes PDEs (see \cite[Subsection~3.4]{becker2020numerical}).

The remainder of this article is organized as follows. In Section~\ref{sec:semi-norm} we introduce a family of suitable semi-norms for a certain class of random fields and we also reveal several basic properties of these semi-norms. Note that the exact solutions $u_d\colon [0,T]\times \R^d \to \R$, $d\in \N$, of the PDEs in~\eqref{intro_thm:concl1} are deterministic functions while the numerical approximations ${U}_{ n,M}^{d, \theta} \colon [0,T] \times \R^d \times \Omega \to \R$, $n, M\in\Z$, $\theta\in\Theta$, $d \in \N$, in \eqref{intro_thm:ass2} are random fields on $[0,T] \times \R^d$. The in Section~\ref{sec:semi-norm} introduced semi-norms for random fields are exploited to estimate the difference between the exact solutions of the PDEs in~\eqref{intro_thm:concl1} and the numerical approximations proposed in this work. In Section~\ref{b34} we subsequently develop the overall complexity analysis for the proposed numerical approximation algorithms to establish Theorem~\ref{thm1} in Subsection~\ref{t5} below and, thereby, to prove Theorem~\ref{intro_thm} in this introductory section. More formally, in Subsection~\ref{t1} we formulate the MLP approximation algorithms proposed in this work and the framework which we employ in our error analysis for the proposed MLP approximation algorithms.
In Subsection~\ref{t2} we establish several basic properties of the proposed MLP approximation algorithms and in Subsection~\ref{t3} we prove a priori estimates for the exact solutions of the PDEs under consideration. Our error analysis for the proposed MLP approximation algorithms can be found in Subsection~\ref{t4}. In Subsection~\ref{t5} we combine this error analysis with a computational cost analysis for the proposed MLP approximation algorithms to accomplish the overall complexity analysis for the proposed MLP approximation algorithms.

\section{Analysis of semi-norms}\label{sec:semi-norm}
In this section we introduce in Subsection~\ref{s1} a family of suitable semi-norms for a certain class of random fields. In Subsection~\ref{s2} we formulate a few basic consequences of Fubini's theorem. In Subsection~\ref{s3} we establish several basic properties of the in Subsection~\ref{s1} introduced semi-norms which we employ in our error analysis for the proposed MLP approximation algorithms in Section~\ref{b34} below.

\subsection{Setting}\label{s1}
Throughout this section we frequently consider the following setting.
\begin{setting}
\label{a00}
Let $d\in \N$, $T \in (0,\infty)$, $L \in [0,\infty)$, $\start\in \R^d$,
let 
$F\colon C([0, T] \times \R^d, \R) \to C([0, T] \times \R^d, \R) $
satisfy for all $u,v\in C([0, T] \times \R^d, \R)$, $t\in [0,T]$, 
$x\in  \R^d$ that
\begin{equation}  \begin{split}
\label{a01}
    |(F(u))(t,x) -(F(v))(t,x)|\leq 
    \LipConst\left|u(t,x)-v(t,x)\right|,
\end{split}     \end{equation}
let
$(\Omega, \mathcal{F}, \P)$
be a probability space,
let $\fwbm \colon [0,T] \times \Omega \to \R^d$
be a standard Brownian motion
with continuous sample paths, 
and
for every $k \in \N_0$ and 
every $(\mathcal{B}([0, T] \times \R^d) \otimes \mathcal{F}) / \mathcal{B}(\R)$-measurable function 
$V \colon [0, T] \times \R^d \times \Omega \to \R$ let
$
\left\|V\right\|_{k} \in [0,\infty]
$
be the extended real number given by
\begin{align}
\begin{aligned}
  \left\|V\right\|_{k}^2
  =
  \begin{cases}\displaystyle
 \E\!\left[\left|V(0,\start)\right|^2\vphantom{\big|}\right]&\qquad 
\colon  k= 0\\[10pt] \displaystyle
   \frac{1}{T^{k}} \int_0^{T}
\tfrac{t^{k-1}}{(k-1)!}\,  \E\!\left[
  \left|V(t,\fwBM_t)\right|^2\vphantom{\big|}\right]dt&\qquad 
\colon
  k\geq 1.
  \end{cases}
\end{aligned}\label{a03}
\end{align}
\end{setting}

Observe that Setting~\ref{a00} specifies in \eqref{a03} for every $k \in \N_0$ and 
every $(\mathcal{B}([0, T] \times \R^d) \otimes \mathcal{F}) / \mathcal{B}(\R)$-measurable function 
$V \colon [0, T] \times \R^d \times \Omega \to \R$ the quantity $\|V\|_k$. Note that for every topological space $(X,\mathcal X)$ it holds that the Borel sigma-algebra $\mathcal B(X)$ is the smallest sigma-algebra that contains $\mathcal X$. 

\subsection{Expectations of random fields}\label{s2}

In this subsection we formulate in Lemma~\ref{prodRF_eval_nonneg}, Lemma~\ref{contRF_eval_nonneg}, 
Lemma~\ref{contRF_eval_integrable}, and Corollary~\ref{id_RF} below 
some elementary consequences of Fubini's theorem.
For the formulations of Lemma~\ref{prodRF_eval_nonneg}, Lemma~\ref{contRF_eval_nonneg}, and
Lemma~\ref{contRF_eval_integrable}  we recall that for every probability space $ ( \Omega, \mathcal{F}, \P ) $, every measurable space $ ( S , \mathcal{S} ) $, every $ \mathcal{F} $/$ \mathcal{S} $-measurable function $Y \colon \Omega \to S$, and every $A\in \mathcal S$ it holds that $(Y ( \P )_{ \mathcal{S} })(A)=\P(Y\in A)=\P(Y^{-1}(A))$ (pushforward measure).

\begin{lemma}
\label{prodRF_eval_nonneg}
Let $ ( \Omega, \mathcal{F}, \P ) $ be a probability space, 
let $\mathcal{G} \subseteq \mathcal{F}$ be a sigma-algebra on $ \Omega $,
let $ ( S , \mathcal{S} ) $ be a measurable space, 
let  $ U =  (U(s))_{s \in S} = (U(s, \omega))_{s \in S, \omega \in \Omega} \colon S \times \Omega \to [0,\infty)$
be an $(\mathcal{S} \otimes \mathcal{G}) / \mathcal{B}([0,\infty))$-measurable function, 
let $Y \colon \Omega \to S$ be a $ \mathcal{F} $/$ \mathcal{S} $-measurable function, 
assume that 
$Y$ 
and 
$
  \mathcal{G}
$
are independent,
and let $\Phi \colon S \to [0,\infty]$ satisfy
for all $s \in S$ that
$
  \Phi(s) = \Exp{U(s)}
$.
Then
\begin{enumerate}[(i)]
\item \label{prodRF_eval_nonneg:item1}
it holds that $U(Y) = ( \Omega \ni \omega \mapsto U(Y(\omega), \omega) \in [0,\infty))$ is an $\mathcal{F} / \mathcal{B}([0,\infty))$-measurable function
and

\item \label{prodRF_eval_nonneg:item2}
it holds that
$
  \Exp{U(Y)} 
= 
  \Exp{\Phi(Y)} 
= 
  \int_{S}  \Exp{U(s)} (Y ( \P )_{ \mathcal{S} })(ds)
$.
\end{enumerate}
\end{lemma}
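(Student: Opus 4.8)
The plan is to obtain both assertions from the measurability of compositions of measurable maps, the independence hypothesis, and Tonelli's theorem. Write $\psi\colon\Omega\to S\times\Omega$ for the map $\psi(\omega)=(Y(\omega),\omega)$. First I would check that $\psi$ is $\mathcal{F}/(\mathcal{S}\otimes\mathcal{G})$-measurable: its first component $Y$ is $\mathcal{F}/\mathcal{S}$-measurable by hypothesis, and its second component is the identity, which is $\mathcal{F}/\mathcal{G}$-measurable because $\mathcal{G}\subseteq\mathcal{F}$. Since $U$ is $(\mathcal{S}\otimes\mathcal{G})/\mathcal{B}([0,\infty))$-measurable, the composition $U\circ\psi = U(Y)$ is $\mathcal{F}/\mathcal{B}([0,\infty))$-measurable, which is exactly item~(i).

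For item~(ii) the key step is to identify the image measure of $\P$ under $\psi$. Let $\mu = Y(\P)_{\mathcal{S}}$ be the law of $Y$ on $(S,\mathcal{S})$, and let $\nu$ be the restriction of $\P$ to $\mathcal{G}$; both are probability measures, so the product measure $\mu\otimes\nu$ on $(S\times\Omega,\mathcal{S}\otimes\mathcal{G})$ is well defined. For all $A\in\mathcal{S}$, $B\in\mathcal{G}$ the independence of $Y$ and $\mathcal{G}$ gives
\begin{equation*}
  (\psi(\P))(A\times B) = \P(\{Y\in A\}\cap B) = \P(Y\in A)\,\P(B) = \mu(A)\,\nu(B) = (\mu\otimes\nu)(A\times B).
\end{equation*}
Since the measurable rectangles form a generating $\pi$-system for $\mathcal{S}\otimes\mathcal{G}$ and $\psi(\P)$ and $\mu\otimes\nu$ are both probability measures, the uniqueness theorem for measures yields $\psi(\P)=\mu\otimes\nu$.

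It then remains to combine this with the image measure (change of variables) formula and Tonelli's theorem. Because $U\geq 0$ is $(\mathcal{S}\otimes\mathcal{G})/\mathcal{B}([0,\infty))$-measurable, the change of variables formula gives $\E[U(Y)] = \int_{S\times\Omega}U\,d(\psi(\P)) = \int_{S\times\Omega}U\,d(\mu\otimes\nu)$, and Tonelli's theorem rewrites this as $\int_S\big(\int_\Omega U(s,\omega)\,\nu(d\omega)\big)\mu(ds)$. For each fixed $s\in S$ the section $U(s,\cdot)$ is $\mathcal{G}$-measurable, so $\int_\Omega U(s,\omega)\,\nu(d\omega) = \int_\Omega U(s,\omega)\,\P(d\omega) = \E[U(s)] = \Phi(s)$; Tonelli also shows that $\Phi$ is $\mathcal{S}/\mathcal{B}([0,\infty])$-measurable, so $\Phi(Y)$ is well defined, and a further application of the change of variables formula gives $\int_S\Phi(s)\,\mu(ds) = \E[\Phi(Y)]$. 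Putting these together with the trivial identity $\int_S\Phi(s)\,\mu(ds) = \int_S\E[U(s)]\,(Y(\P)_{\mathcal{S}})(ds)$ yields item~(ii).

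The only genuinely delicate point is the identification $\psi(\P)=\mu\otimes\nu$, where it is essential to use the sub-sigma-algebra $\mathcal{G}$ (not $\mathcal{F}$) on the second factor: the independence assumption is between $\sigma(Y)$ and $\mathcal{G}$, and, dually, the joint $\mathcal{S}\otimes\mathcal{G}$-measurability of $U$ is precisely what makes the inner Tonelli integral against $\nu$ agree with $\E[U(s)]$. Everything else is routine bookkeeping with measurable maps and nonnegative integrals.
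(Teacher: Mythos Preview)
Your proof is correct and follows essentially the same approach as the paper: the map you call $\psi$ is the paper's $(Y,Z)$ with $Z=\mathrm{id}_\Omega$, the identification $\psi(\P)=\mu\otimes\nu$ is exactly the paper's $((Y,Z))(\P)_{\mathcal{S}\otimes\mathcal{G}}=(Y(\P)_{\mathcal{S}})\otimes(Z(\P)_{\mathcal{G}})$, and both then apply change of variables plus Tonelli/Fubini. Your write-up is in fact slightly more complete, since you explicitly justify the middle term $\E[\Phi(Y)]$ via the $\mathcal{S}$-measurability of $\Phi$ and a second change of variables, whereas the paper's proof only spells out the equality of the outer two expressions.
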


\begin{lemma}
\label{contRF_eval_nonneg}
Let $ ( \Omega, \mathcal{F}, \P ) $ be a probability space, 
let $ ( S , \delta ) $ be a separable metric space, 
let  $ U =  (U(s))_{s \in S} = (U(s, \omega))_{s \in S, \omega \in \Omega} \colon S \times \Omega \to [0,\infty)$
be a continuous random field, 
let $Y \colon \Omega \to S$ be a random variable, 
assume that $U$ and $Y$ are independent,
and let $\Phi \colon S \to [0,\infty]$ satisfy
for all $s \in S$ that
$
  \Phi(s) = \Exp{U(s)}
$.
Then
\begin{enumerate}[(i)]
\item \label{contRF_eval_nonneg:item1}
it holds that $U(Y) = ( \Omega \ni \omega \mapsto U(Y(\omega), \omega) \in [0,\infty))$ is an $\mathcal{F} / \mathcal{B}([0,\infty))$-measurable function
and

\item \label{contRF_eval_nonneg:item2}
it holds that
$
  \Exp{U(Y)} 
= 
  \Exp{\Phi(Y)} 
= 
  \int_{S}  \Exp{U(s)} (Y ( \P )_{\mathcal{B}(S)})(ds)
$.
\end{enumerate}
\end{lemma}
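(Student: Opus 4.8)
The plan is to reduce the assertion to \cref{prodRF_eval_nonneg} by supplying a sub-sigma-algebra $\mathcal{G}\subseteq\mathcal{F}$ for which $U$ is jointly measurable and with respect to which $Y$ is independent. Since $(S,\delta)$ is separable, I fix a countable dense set $D=\{s_k\colon k\in\N\}\subseteq S$ and take $\mathcal{G}=\sigma(\{U(s,\cdot)\colon s\in D\})$. Each $U(s,\cdot)$ is $\mathcal{F}/\mathcal{B}([0,\infty))$-measurable (as $U$ is a random field), so $\mathcal{G}\subseteq\mathcal{F}$, and the independence of $U$ and $Y$ yields that $Y$ and $\mathcal{G}$ are independent. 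Moreover $Y$, being a random variable in $S$, is $\mathcal{F}/\mathcal{B}(S)$-measurable. Thus the only genuinely nonroutine point is to verify that $U$ is $(\mathcal{B}(S)\otimes\mathcal{G})/\mathcal{B}([0,\infty))$-measurable.

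To establish this joint measurability, for every $n\in\N$ I introduce the sets $A_{n,k}=\{s\in S\colon\delta(s,s_k)<\tfrac1n\text{ and }\delta(s,s_j)\geq\tfrac1n\text{ for all }j\in\{1,\dots,k-1\}\}$, $k\in\N$; these are Borel subsets of $S$, they are pairwise disjoint, and density of $D$ ensures $\cup_{k\in\N}A_{n,k}=S$. I then set $U_n(s,\omega)=\sum_{k\in\N}\1_{A_{n,k}}(s)\,U(s_k,\omega)$ for $(s,\omega)\in S\times\Omega$, which is well defined since the $A_{n,k}$ are pairwise disjoint and which is $(\mathcal{B}(S)\otimes\mathcal{G})/\mathcal{B}([0,\infty))$-measurable as a countable sum of products of a $\mathcal{B}(S)/\mathcal{B}(\R)$-measurable function and a $\mathcal{G}/\mathcal{B}(\R)$-measurable function. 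Since for each $(s,\omega)$ and each $n$ the point $s$ lies in a unique $A_{n,k}$ with $\delta(s,s_k)<\tfrac1n$, the continuity of $S\ni r\mapsto U(r,\omega)$ gives $\lim_{n\to\infty}U_n(s,\omega)=U(s,\omega)$; hence $U$ is a pointwise limit of $(\mathcal{B}(S)\otimes\mathcal{G})/\mathcal{B}([0,\infty))$-measurable functions and therefore is itself $(\mathcal{B}(S)\otimes\mathcal{G})/\mathcal{B}([0,\infty))$-measurable.

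With this, all hypotheses of \cref{prodRF_eval_nonneg} hold for $(S,\mathcal{S})=(S,\mathcal{B}(S))$, the chosen $\mathcal{G}$, and the given $U$, $Y$, $\Phi$, so I conclude Item~\eqref{contRF_eval_nonneg:item1} and the identities $\E[U(Y)]=\E[\Phi(Y)]=\int_S\E[U(s)]\,(Y(\P)_{\mathcal{B}(S)})(ds)$ of Item~\eqref{contRF_eval_nonneg:item2} directly from \cref{prodRF_eval_nonneg}. The main obstacle is the joint-measurability step of the second paragraph; the rest is just matching the present separable-metric continuous-random-field setting to the abstract setting of \cref{prodRF_eval_nonneg}.
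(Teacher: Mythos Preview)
Your proof is correct and follows essentially the same route as the paper: reduce to \cref{prodRF_eval_nonneg} by exhibiting a sub-sigma-algebra $\mathcal{G}$ with respect to which $U$ is jointly measurable and which is independent of $Y$. The only difference is that the paper takes $\mathcal{G}=\sigma_\Omega(U)$ and cites an external lemma (Beck et al.) for the joint measurability of a continuous random field on a separable metric space, whereas you choose the smaller $\mathcal{G}$ generated by $U$ on a countable dense set and supply the standard approximation argument yourself; both lead to the same invocation of \cref{prodRF_eval_nonneg}.
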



\begin{lemma}
\label{contRF_eval_integrable}
Let $ ( \Omega, \mathcal{F}, \P ) $ be a probability space, 
let $ ( S , \delta ) $ be a separable metric space, 
let  $ U =  (U(s))_{s \in S} = (U(s, \omega))_{s \in S, \omega \in \Omega} \colon S \times \Omega \to \R$
be a continuous random field, 
let $Y \colon \Omega \to S$ be a random variable, 
assume that $U$ and $Y$ are independent, 
and assume that 
$
  \int_{S}  \Exp{|U(s)|} (Y ( \P )_{\mathcal{B}(S)})(ds) < \infty
$.
Then
\begin{enumerate}[(i)]
\item \label{contRF_eval_integrable:item1}
it holds that $U(Y) = ( \Omega \ni \omega \mapsto U(Y(\omega), \omega) \in \R)$ is an $\mathcal{F} / \mathcal{B}(\R)$-measurable function
and

\item \label{contRF_eval_integrable:item2}
it holds that $\Exp{|U(Y)|} < \infty$ and 
$
  \Exp{U(Y)} 
= 
  \int_{S}  
    \mathbbm{1}_{ \{ \mathfrak{s} \in S \colon \Exp{|U(\mathfrak{s})|} < \infty \} } (s) \,
    \Exp{U(s)} 
  (Y ( \P )_{\mathcal{B}(S)})(ds)
$.
\end{enumerate}
\end{lemma}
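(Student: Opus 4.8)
The plan is to reduce the assertion to the already-established nonnegative case in \cref{contRF_eval_nonneg} by splitting $U$ into its positive and negative parts. I would define $U^{+}=\max\{U,0\}\colon S\times\Omega\to[0,\infty)$ and $U^{-}=\max\{-U,0\}\colon S\times\Omega\to[0,\infty)$. Since the maps $\R\ni r\mapsto\max\{r,0\}\in[0,\infty)$ and $\R\ni r\mapsto\max\{-r,0\}\in[0,\infty)$ are continuous and $U$ is a continuous random field, both $U^{+}$ and $U^{-}$ are continuous random fields, and since $U^{+}$ and $U^{-}$ are pointwise functions of $U$ (so that $\sigma_\Omega(U^{+})\cup\sigma_\Omega(U^{-})\subseteq\sigma_\Omega(U)$), the hypothesis that $U$ and $Y$ are independent ensures that $U^{+}$ and $Y$ are independent and that $U^{-}$ and $Y$ are independent. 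Hence \cref{contRF_eval_nonneg} (applied once with $U^{+}$ and once with $U^{-}$) shows that $U^{+}(Y)$ and $U^{-}(Y)$ are $\mathcal{F}/\mathcal{B}([0,\infty))$-measurable and that
\begin{equation*}
  \Exp{U^{+}(Y)}=\int_{S}\Exp{U^{+}(s)}\,(Y(\P)_{\mathcal{B}(S)})(ds)
  \qquad\text{and}\qquad
  \Exp{U^{-}(Y)}=\int_{S}\Exp{U^{-}(s)}\,(Y(\P)_{\mathcal{B}(S)})(ds).
\end{equation*}

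Next I would deduce Item~\eqref{contRF_eval_integrable:item1} and the integrability claim in Item~\eqref{contRF_eval_integrable:item2}. As $U$ is $\R$-valued, $U(Y)=U^{+}(Y)-U^{-}(Y)$ pointwise on $\Omega$ with both summands real-valued and $\mathcal{F}/\mathcal{B}(\R)$-measurable, whence $U(Y)$ is $\mathcal{F}/\mathcal{B}(\R)$-measurable; this proves Item~\eqref{contRF_eval_integrable:item1}. Moreover $|U|=U^{+}+U^{-}$ on $S\times\Omega$, so by linearity of the expectation for $[0,\infty]$-valued random variables $\Exp{|U(s)|}=\Exp{U^{+}(s)}+\Exp{U^{-}(s)}$ for every $s\in S$; integrating this against $Y(\P)_{\mathcal{B}(S)}$ and using the two displayed identities gives $\Exp{U^{+}(Y)}+\Exp{U^{-}(Y)}=\int_{S}\Exp{|U(s)|}\,(Y(\P)_{\mathcal{B}(S)})(ds)$, which is finite by hypothesis. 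Since $|U(Y)|=U^{+}(Y)+U^{-}(Y)$, this yields $\Exp{|U(Y)|}<\infty$, hence $U^{+}(Y),U^{-}(Y)\in L^{1}(\P)$ and $\Exp{U(Y)}=\Exp{U^{+}(Y)}-\Exp{U^{-}(Y)}$.

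Finally, for the identity in Item~\eqref{contRF_eval_integrable:item2}, I would set $A=\{\mathfrak{s}\in S\colon\Exp{|U(\mathfrak{s})|}<\infty\}$ (a Borel subset of $S$, since $s\mapsto\Exp{|U(s)|}$ is measurable by the joint measurability of $U$ and Tonelli's theorem) and note that finiteness of $\int_{S}\Exp{|U(s)|}\,(Y(\P)_{\mathcal{B}(S)})(ds)$ forces $(Y(\P)_{\mathcal{B}(S)})(S\setminus A)=0$, while $\int_{S}\Exp{U^{+}(s)}\,(Y(\P)_{\mathcal{B}(S)})(ds)$ and $\int_{S}\Exp{U^{-}(s)}\,(Y(\P)_{\mathcal{B}(S)})(ds)$ are both finite because each is bounded by $\int_{S}\Exp{|U(s)|}\,(Y(\P)_{\mathcal{B}(S)})(ds)$. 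Combining the two displayed identities with the preceding step and then discarding the $(Y(\P)_{\mathcal{B}(S)})$-null set $S\setminus A$ before subtracting the two finite integrals, I obtain $\Exp{U(Y)}=\int_{S}\1_{A}(s)\,(\Exp{U^{+}(s)}-\Exp{U^{-}(s)})\,(Y(\P)_{\mathcal{B}(S)})(ds)$, and since $\Exp{U^{+}(s)}-\Exp{U^{-}(s)}=\Exp{U(s)}$ for every $s\in A$ this is exactly $\int_{S}\1_{A}(s)\,\Exp{U(s)}\,(Y(\P)_{\mathcal{B}(S)})(ds)$, as claimed. I expect the only delicate point to be this last bookkeeping step: on $S\setminus A$ the quantity $\Exp{U(s)}$ need not be well-defined (one of $\Exp{U^{+}(s)},\Exp{U^{-}(s)}$ may equal $+\infty$ there), which is precisely why the indicator $\1_{A}$ appears in the statement and why one must split off this null set before combining the positive and negative contributions; everything else is routine given \cref{contRF_eval_nonneg} and elementary integration theory.
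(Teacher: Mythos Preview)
Your proposal is correct and follows essentially the same approach as the paper: decompose $U$ into its positive and negative parts, apply \cref{contRF_eval_nonneg} to each, and then recombine using the null set where $\E[|U(s)|]=\infty$. The paper's proof is virtually identical (with the notation $\mathcal{U},\mathfrak{U}$ in place of your $U^{+},U^{-}$), though you add a few justifications (continuity of $U^{\pm}$, independence from $Y$, Borel measurability of $A$) that the paper leaves implicit.
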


\begin{corollary}
\label{id_RF}
Let $ ( \Omega, \mathcal{F}, \P ) $ be a probability space, 
let $ ( S , \delta ) $ be a separable metric space, 
let $(E, \mathcal{E})$ be measurable space,
let  $ U_1, U_2  \colon S \times \Omega \to \R$
be continuous random fields, 
let $Y_1, Y_2 \colon E \times \Omega \to S$ be random fields, 
assume for all $i \in \{ 1, 2 \}$ that $U_i$ and $Y_i$ are independent, 
assume that $U_1$ and $U_2$ are identically distributed, and
assume that $Y_1$ and $Y_2$ are identically distributed.
Then
it holds that
$U_1(Y_1) =  (E \times \Omega \ni (e, \omega) \mapsto U_1(Y_1(e), \omega) \in \R)$ 
and
$U_2(Y_2) = (E \times \Omega \ni (e, \omega) \mapsto U_2(Y_2(e), \omega) \in \R)$ 
are identically distributed random fields.
\end{corollary}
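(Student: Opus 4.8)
The plan is to reduce the assertion to an equality of finite-dimensional distributions and to evaluate each of these by disintegrating along the independent pair $(U_i,Y_i)$. Recall that the law of an $\R$-valued random field indexed by $E$ (regarded as a random variable with values in $\R^E$ equipped with the product sigma-algebra) is uniquely determined by its finite-dimensional distributions, since the finite cylinder sets form a $\pi$-system which generates the product sigma-algebra; moreover, the distribution of an $\R^m$-valued random variable is determined by the expectations of bounded continuous test functions applied to it. Hence, by a standard application of Dynkin's $\pi$-$\lambda$ theorem, it suffices to show for every $m\in\N$, all $e_1,\dots,e_m\in E$, and every bounded continuous $\varphi\colon\R^m\to\R$ that
\begin{equation}
  \Exp{ \varphi\big( U_1(Y_1(e_1)),\dots,U_1(Y_1(e_m)) \big) }
  =
  \Exp{ \varphi\big( U_2(Y_2(e_1)),\dots,U_2(Y_2(e_m)) \big) }.
\end{equation}

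To this end I would fix such $m$, $e_1,\dots,e_m$, and $\varphi$ and work on the separable metric space $S^m$ (equipped, say, with the maximum metric). For $i\in\{1,2\}$ I define the random field $V_i\colon S^m\times\Omega\to\R$ by $V_i(s_1,\dots,s_m,\omega)=\varphi(U_i(s_1,\omega),\dots,U_i(s_m,\omega))$; continuity of each $U_i$ in the spatial variable and continuity of $\varphi$ ensure that $V_i$ is a continuous random field, and $V_i$ is bounded because $\varphi$ is bounded. Since every evaluation $V_i(\mathfrak{s},\cdot)$, $\mathfrak{s}\in S^m$, is $\sigma_\Omega(U_i)$-measurable and $U_i$ is independent of $Y_i$, the random field $V_i$ is independent of the random variable $(Y_i(e_1),\dots,Y_i(e_m))\colon\Omega\to S^m$. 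Applying Lemma~\ref{contRF_eval_integrable} with $U=V_i$ and $Y=(Y_i(e_1),\dots,Y_i(e_m))$ --- whose integrability hypothesis is automatic since $V_i$ is bounded and $\E[|V_i(\mathfrak{s})|]<\infty$ for all $\mathfrak{s}\in S^m$ --- then yields
\begin{equation}
  \Exp{ \varphi\big( U_i(Y_i(e_1)),\dots,U_i(Y_i(e_m)) \big) }
  =
  \int_{S^m} \Psi_i(s_1,\dots,s_m) \, \big( (Y_i(e_1),\dots,Y_i(e_m))(\P)_{\mathcal{B}(S^m)} \big)(ds_1,\dots,ds_m),
\end{equation}
where $\Psi_i\colon S^m\to\R$ is the (well-defined) function given by $\Psi_i(s_1,\dots,s_m)=\Exp{ V_i(s_1,\dots,s_m) }$.

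It then remains to compare the two sides factor by factor. For fixed $s_1,\dots,s_m\in S$ the quantity $\Psi_i(s_1,\dots,s_m)$ equals the expectation of a bounded measurable functional (namely $\varphi$ composed with the evaluation maps at $s_1,\dots,s_m$) applied to the random field $U_i$, so the hypothesis that $U_1$ and $U_2$ are identically distributed gives $\Psi_1=\Psi_2$. Likewise, $(Y_i(e_1),\dots,Y_i(e_m))$ is obtained from the random field $Y_i$ by a fixed measurable coordinate projection, so the hypothesis that $Y_1$ and $Y_2$ are identically distributed gives $(Y_1(e_1),\dots,Y_1(e_m))(\P)_{\mathcal{B}(S^m)}=(Y_2(e_1),\dots,Y_2(e_m))(\P)_{\mathcal{B}(S^m)}$. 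Inserting these two identities into the displayed disintegration establishes the required equality of finite-dimensional distributions and hence the corollary. I expect the only genuinely delicate point to be the joint measurability and well-definedness of $\Psi_i$ together with the passage from equality in law of the fields $U_i$ (resp.\ $Y_i$) to equality in law of their finite tuples of evaluations; this is precisely where the continuity of the random fields and the measurability result underlying Lemma~\ref{contRF_eval_nonneg} enter.
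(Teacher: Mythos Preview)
Your proposal is correct and follows essentially the same strategy as the paper: reduce to equality of finite-dimensional distributions, pass to the product space $S^m$, disintegrate along the independent pair $(U_i,(Y_i(e_1),\dots,Y_i(e_m)))$ using the paper's Fubini-type lemmas, and then match the two factors separately via the identical-distribution hypotheses. The only cosmetic difference is that the paper tests against indicators of measurable rectangles $\mathbbm{1}_{B_1}\cdots\mathbbm{1}_{B_m}$ (and invokes Lemma~\ref{prodRF_eval_nonneg} directly), whereas you test against bounded continuous $\varphi$ (and invoke Lemma~\ref{contRF_eval_integrable}); both classes of test functions determine the law on $\R^m$, so the arguments are interchangeable.
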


\subsection{Properties of the semi-norms}\label{s3}

In this subsection we establish 
in \cref{a07a}, \cref{a07}, \cref{a08}, \cref{a14}, \cref{a26}, 
and \cref{a36} a few basic properties for the quantities in \eqref{a03} 
in \cref{a00} above.
The proof of \cref{a07a} is clear and therefore omitted.

\begin{lemma}[Semi-norm property]
\label{a07a}
Assume \cref{a00}, let $k\in\N_0$, $\lambda \in \R$, and let $U, V \colon [0, T] \times \R^d \times \Omega \to \R$ be $(\mathcal{B}([0, T] \times \R^d) \otimes \mathcal{F}) / \mathcal{B}(\R)$-measurable functions.
Then  
\begin{enumerate}[(i)]
\item \label{a07a:item1} 
it holds that
$\| U + V \|_{k}\leq \|U\|_{k}+\|V\|_{k}$ and
\item \label{a07a:item2} 
it holds that
$\|\lambda U \|_{k}= |\lambda| \| U \|_k$.
\end{enumerate}
\end{lemma}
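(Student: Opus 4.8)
The plan is to observe that for every $k \in \N_0$ the quantity $\| \cdot \|_k$ from \eqref{a03} is the square root of the integral of the squared modulus of a fixed \emph{linear} transform of the argument against a finite measure, so that Items~\eqref{a07a:item1} and~\eqref{a07a:item2} become nothing more than the Minkowski inequality and the absolute homogeneity of $L^2$-type seminorms (with the usual bookkeeping for the value $+\infty$).

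First I would make the underlying measure spaces and linear maps explicit. In the case $k = 0$ the map $V \mapsto (\Omega \ni \omega \mapsto V(0,\start,\omega) \in \R)$ is $\R$-linear and $\|V\|_0 = (\Exp{|V(0,\start)|^2})^{1/2}$ is precisely the $L^2(\P)$-seminorm of its image. In the case $k \geq 1$ I would introduce the measure $\mu_k$ on $([0,T] \times \Omega, \mathcal{B}([0,T]) \otimes \F)$ having density $(t,\omega) \mapsto \tfrac{1}{T^k}\tfrac{t^{k-1}}{(k-1)!}$ with respect to the product of Lebesgue measure on $[0,T]$ and $\P$; since $\tfrac{1}{T^k}\int_0^T \tfrac{t^{k-1}}{(k-1)!}\,dt = \tfrac{1}{k!} < \infty$, $\mu_k$ is a finite measure. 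Because $\fwbm$ has continuous sample paths, the map $[0,T] \times \Omega \ni (t,\omega) \mapsto \fwBM_t(\omega) \in \R^d$ is jointly measurable, so together with the measurability hypothesis on $V$ the assignment $V \mapsto \Phi_k(V)$, where $\Phi_k(V)(t,\omega) = V(t,\fwBM_t(\omega),\omega)$, is $\R$-linear and takes values in the $(\mathcal{B}([0,T]) \otimes \F)/\mathcal{B}(\R)$-measurable functions, and by construction $\|V\|_k = (\int_{[0,T]\times\Omega} |\Phi_k(V)|^2\,d\mu_k)^{1/2} \in [0,\infty]$ is the $L^2(\mu_k)$-seminorm of $\Phi_k(V)$.

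Given this, Item~\eqref{a07a:item2} follows immediately from $\lambda \Phi_k(U) = \Phi_k(\lambda U)$ (resp.\ $\lambda\, U(0,\start,\cdot) = (\lambda U)(0,\start,\cdot)$) together with the identity $\|\lambda h\|_{L^2} = |\lambda|\,\|h\|_{L^2}$, valid in $[0,\infty]$ with the convention $0 \cdot \infty = 0$; and Item~\eqref{a07a:item1} follows from $\Phi_k(U+V) = \Phi_k(U) + \Phi_k(V)$ (resp.\ additivity of evaluation at $(0,\start)$) together with the Minkowski inequality $\|h_1 + h_2\|_{L^2} \leq \|h_1\|_{L^2} + \|h_2\|_{L^2}$, which holds in $[0,\infty]$ and is trivial when the right-hand side is infinite. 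Since each of these ingredients is entirely standard, there is in truth no real obstacle — this is presumably why the authors declare the proof clear; the only points that warrant a word of care are the joint measurability of the composition $\Phi_k(V)$, which is handled by continuity of the Brownian sample paths, and the need to carry the computations in $[0,\infty]$ so that both assertions remain meaningful even when $\|U\|_k$ or $\|V\|_k$ equals $+\infty$.
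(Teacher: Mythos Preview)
Your argument is correct and is exactly the natural justification the authors have in mind: for each $k\in\N_0$ the quantity $\|\cdot\|_k$ is the $L^2$-seminorm of a linear image of the argument with respect to a fixed (finite) measure, whence the triangle inequality and absolute homogeneity are immediate from Minkowski. The paper in fact omits the proof entirely, declaring it ``clear'', so there is nothing further to compare.
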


\begin{lemma}[Expectations]\label{a07}
Assume \cref{a00}, let $k\in\N_0$, 
let $U \colon [0, T] \times \R^d \times \Omega \to \R$ be a continuous random field, 
assume that $U$ and $\fwbm$ are independent, 
and assume for all $t\in [0,T]$, $x\in  \R^d$ that $\E[|U(t,x)|]<\infty$.
Then it holds that
\begin{equation}  
\label{a07:concl1}
    \left\|  
      [0,T] \times \R^d\times \Omega\ni (t,x,\omega) \mapsto \E[U(t, x)] \in \R
    \right\|_{k}
    =
    \left\|\E[U]\right\|_{k}
    \leq
    \left\|U\right\|_{k}.
\end{equation}
\end{lemma}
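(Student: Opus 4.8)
The plan is to reduce~\eqref{a07:concl1}, for each fixed $k\in\N_0$, to a pointwise application of Jensen's inequality together with the disintegration identity provided by \cref{contRF_eval_nonneg}. Throughout, write $\Phi\colon[0,T]\times\R^d\to\R$ for the function given by $\Phi(t,x)=\E[U(t,x)]$, which is real-valued by the assumed integrability of $U$; since a continuous random field is jointly measurable (by \cite[Lemma~2.4]{Becketal2018}), Fubini's theorem shows that $\Phi$ is $\mathcal{B}([0,T]\times\R^d)$-measurable, so that the (deterministic, and hence $(\mathcal{B}([0,T]\times\R^d)\otimes\mathcal{F})/\mathcal{B}(\R)$-measurable) random field $[0,T]\times\R^d\times\Omega\ni(t,x,\omega)\mapsto\Phi(t,x)\in\R$ has a well-defined semi-norm, namely $\|\E[U]\|_k$. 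The case $k=0$ is then immediate: since $\start$ is deterministic, Jensen's inequality gives $\|\E[U]\|_0^2=|\Phi(0,\start)|^2=|\E[U(0,\start)]|^2\le\E[|U(0,\start)|^2]=\|U\|_0^2$.

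For $k\ge1$ the core of the argument is the pointwise inequality
\[
  \E\!\left[|\Phi(t,\fwBM_t)|^2\right]\le\E\!\left[|U(t,\fwBM_t)|^2\right],\qquad t\in[0,T].
\]
To establish it, fix $t\in[0,T]$ and let $\mu_t$ denote the law of $\fwBM_t$ on $(\R^d,\mathcal{B}(\R^d))$. I apply \cref{contRF_eval_nonneg} with $S=\R^d$, $Y=\fwBM_t$, and the nonnegative continuous random field $(x,\omega)\mapsto|U(t,x,\omega)|^2$; the $\sigma$-algebra this random field generates on $\Omega$ is contained in $\sigma_\Omega(U)$, whereas $\fwBM_t=\start+\fwbm_t$ is $\sigma_\Omega(\fwbm)$-measurable, so the required independence follows from the hypothesis that $U$ and $\fwbm$ are independent. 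This yields $\E[|U(t,\fwBM_t)|^2]=\int_{\R^d}\E[|U(t,x)|^2]\,\mu_t(dx)$. Since $\Phi(t,\cdot)$ is Borel measurable, the integral transformation theorem gives $\E[|\Phi(t,\fwBM_t)|^2]=\int_{\R^d}|\Phi(t,x)|^2\,\mu_t(dx)$. As Jensen's inequality shows $|\Phi(t,x)|^2=|\E[U(t,x)]|^2\le\E[|U(t,x)|^2]$ for every $x\in\R^d$ (with the right-hand side possibly equal to $+\infty$), integrating this bound against $\mu_t$ proves the displayed inequality. Multiplying it by the nonnegative weight $\tfrac{t^{k-1}}{(k-1)!\,T^k}$, integrating over $t\in(0,T)$, and taking square roots then gives $\|\E[U]\|_k\le\|U\|_k$ for $k\ge1$ as well.

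I expect the only point requiring genuine care — rather than a real obstacle — to be the measurability and independence bookkeeping needed to legitimately invoke \cref{contRF_eval_nonneg}: that for each fixed $t$ the map $(x,\omega)\mapsto|U(t,x,\omega)|^2$ really is a continuous random field on $\R^d\times\Omega$, that the sub-$\sigma$-algebra it generates on $\Omega$ is independent of $\sigma_\Omega(\fwBM_t)\subseteq\sigma_\Omega(\fwbm)$, and that $\Phi(t,\cdot)$ is Borel so that the change-of-variables step is valid. Beyond this, the proof is a routine combination of Jensen's inequality, Fubini's theorem, and \cref{contRF_eval_nonneg}.
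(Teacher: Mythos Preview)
Your proposal is correct and follows essentially the same route as the paper's proof: both handle $k=0$ by a direct application of Jensen's inequality at the deterministic point $\start$, and for $k\ge1$ both disintegrate over the law $\mu_t$ of $\fwBM_t$, apply Jensen pointwise to obtain $|\E[U(t,x)]|^2\le\E[|U(t,x)|^2]$, and invoke \cref{contRF_eval_nonneg} (using the independence of $U$ and $\fwbm$) to rewrite $\int_{\R^d}\E[|U(t,x)|^2]\,\mu_t(dx)$ as $\E[|U(t,\fwBM_t)|^2]$. Your measurability bookkeeping for $\Phi$ is slightly more explicit than the paper's, but the arguments are otherwise identical.
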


\begin{proof}[Proof of \cref{a07}]
Throughout this proof let $v \colon [0, T] \times \R^d \to  \R $ satisfy 
for all $t\in [0,T]$, $x\in  \R^d$ that
$
  v(t, x)
=
  \E[U(t, x)]
$
and let
$\mu_{t} \colon \mathcal{B}(\R^d) \to [0,1]$, $t \in [0,T]$, be the probability measures which satisfy 
for all $t \in [0,T]$, $B \in \mathcal{B}(\R^d)$ that
$
  \mu_t(B) 
=
  \P( \start + \fwbm_t \in B )
$.
Note that Jensen's inequality and \eqref{a03} assure that
\begin{equation}
\label{a07:eq0}
\begin{split}
  \left\|\E[U]\right\|_{0}^2
=
  \left\| v \right\|_{0}^2
=
  \Exp{ | v(0, \start) |^2  }
=
  | v(0, \start) |^2
=  
  | \Exp{ U(0, \start) } |^2
\leq
  \Exp{ | U(0, \start) |^2 }
=
  \left\| U \right\|_{0}^2.
\end{split}
\end{equation}
Next observe that \eqref{a03} ensures that
for all $l \in \N$ it holds that
\begin{equation}
\label{a07:eq1}
\begin{split}
  \left\|\E[U]\right\|_{l}^2
=
  \left\| v \right\|_{l}^2
=
  \frac{1}{T^{l}} 
  \int_0^{T}
    \tfrac{t^{l-1}}{(l-1)!}\,  
    \Exp{ \left| v(t, \xi + \fwbm_t)\right|^2 }
  dt.
\end{split}
\end{equation}
Moreover, note that the integral transformation theorem, Jensen's inequality, \cref{contRF_eval_nonneg}, the hypothesis that $U$ is a continuous random field, and the hypothesis that $U$ and $\fwbm$ are independent ensure that
for all $t \in [0, T]$ it holds that
\begin{equation}
\begin{split}
  \Exp{ \left| v(t, \xi + \fwbm_t)\right|^2 }
&=
  \int_{\R^d} \left| v(t, x)\right|^2 \, \mu_t( d x )
=
  \int_{\R^d} \left| \E[U(t, x)] \right|^2 \, \mu_t( d x ) \\
&\leq
  \int_{\R^d} \Exp{ \left| U(t, x) \right|^2 } \, \mu_t( d x )
=
  \Exp{ \left| U(t, \xi + \fwbm_t) \right|^2 }.
\end{split}
\end{equation}
This and \eqref{a07:eq1} imply that 
for all $l \in \N$ it holds that
\begin{equation}
\begin{split}
  \left\|\E[U]\right\|_{l}^2
\leq
  \frac{1}{T^{l}} 
  \int_0^{T}
    \tfrac{t^{l-1}}{(l-1)!}\,  
    \Exp{ \left| U(t, \xi + \fwbm_t) \right|^2 }
  dt
=
  \left\| U \right\|_{l}^2.
\end{split}
\end{equation}
Combining this and \eqref{a07:eq0} establishes \eqref{a07:concl1}.
The proof of \cref{a07} is thus completed.
\end{proof}
\begin{lemma}[Linear combinations of i.i.d.\ random variables]\label{a08}
Assume \cref{a00}, let $k\in\N_0$, $n\in\N$,  $r_1,\ldots,r_n\in\R$, 
let
$U_1, \ldots,U_n \colon [0, T] \times \R^d \times \Omega \to \R$ be continuous i.i.d.\ random fields,
assume that
$(U_i)_{i \in \{1, 2, \ldots, n\}}$ and $\fwbm$ are independent, 
and assume  
for all $t \in [0,T]$, $x\in \R^d$ that
$\E[|U_1(t,x)|]<\infty$.
Then it holds that
\begin{align}
    \left\|\sum_{i=1}^nr_i\left(U_i-\E[U_i]\right)\right\|_{k}
    &= \left\|U_1-\E[U_1]\right\|_{k}
\left[\sum_{i=1}^n|r_i|^2\right]^{\!\nicefrac{1}{2}}    \leq \left\|U_1\right\|_{k}
\left[\sum_{i=1}^n|r_i|^2\right]^{\!\nicefrac{1}{2}}.\label{a09}
\end{align}     
\end{lemma}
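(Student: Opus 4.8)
The plan is to compute $\|\sum_{i=1}^n r_i(U_i - \E[U_i])\|_k^2$ directly from the definition \eqref{a03} by expanding the square and using independence of the i.i.d.\ fields to kill the cross terms. First I would handle the measurability and integrability: since each $U_i$ is a continuous random field, \cref{contRF_eval_nonneg} (applied with $Y = \xi + \fwbm_t$ for fixed $t$, or more precisely to the whole random field against $\fwbm$) gives that $U_i(t, \xi + \fwbm_t)$ is measurable, and the assumed finiteness $\E[|U_1(t,x)|] < \infty$ together with the i.i.d.\ hypothesis lets me define $v_i(t,x) = \E[U_i(t,x)] = \E[U_1(t,x)] =: v(t,x)$, independent of $i$. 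So $\sum_i r_i(U_i - \E[U_i])$ has the same centered-expectation structure for every $i$.

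Next, fix $k \geq 1$ (the case $k = 0$ is analogous with $\E[|\cdot(0,\xi)|^2]$ in place of the time integral). For each $t \in [0,T]$ I would condition on $\fwbm$: by the independence of $(U_i)_i$ and $\fwbm$, and by \cref{contRF_eval_nonneg} (or \cref{contRF_eval_integrable}) applied with $Y = \xi + \fwbm_t$, I get
\begin{equation}
\E\!\left[\Big| \textstyle\sum_{i=1}^n r_i (U_i(t, \xi + \fwbm_t) - v(t, \xi + \fwbm_t)) \Big|^2\right]
= \int_{\R^d} \E\!\left[\Big| \textstyle\sum_{i=1}^n r_i (U_i(t, x) - v(t, x)) \Big|^2\right] \mu_t(dx),
\end{equation}
where $\mu_t$ is the law of $\xi + \fwbm_t$. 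Inside the integral, expanding the square gives $\sum_{i,j} r_i r_j \,\E[(U_i(t,x) - v(t,x))(U_j(t,x) - v(t,x))]$; since $U_i, U_j$ are independent for $i \neq j$ and both have mean $v(t,x)$, the off-diagonal terms vanish, leaving $\sum_{i=1}^n |r_i|^2 \,\E[|U_1(t,x) - v(t,x)|^2]$, using that the $U_i$ are identically distributed. Then I would integrate back: $\int_{\R^d} \E[|U_1(t,x) - v(t,x)|^2] \mu_t(dx) = \E[|U_1(t,\xi+\fwbm_t) - \E[U_1](t,\xi+\fwbm_t)|^2]$ by \cref{contRF_eval_nonneg} again, multiply by $\tfrac{t^{k-1}}{(k-1)! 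T^k}$, integrate over $t \in [0,T]$, and recognize the result as $\left[\sum_{i=1}^n |r_i|^2\right] \|U_1 - \E[U_1]\|_k^2$. Taking square roots gives the equality in \eqref{a09}.

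For the final inequality $\|U_1 - \E[U_1]\|_k \leq \|U_1\|_k$, I would apply the triangle inequality (\cref{a07a}\eqref{a07a:item1}) to write $\|U_1 - \E[U_1]\|_k \le \|U_1\|_k + \|\E[U_1]\|_k$ — but that is the wrong direction. Instead the clean route is: $U_1 - \E[U_1]$ is itself the field $U_1$ minus its expectation, and applying \cref{a07} to $U_1$ shows $\|\E[U_1]\|_k \le \|U_1\|_k$; then the bias–variance-type identity $\|U_1 - \E[U_1]\|_k^2 = \|U_1\|_k^2 - \|\E[U_1]\|_k^2$ — which follows from the same expand-the-square computation since the cross term $\E[(U_1 - v)\bar v]$ integrates to zero pointwise in $(t,x)$ — immediately yields $\|U_1 - \E[U_1]\|_k \le \|U_1\|_k$. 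So I would actually prove this orthogonality identity as a small internal step and read off both the equality and the inequality from it.

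**Main obstacle.** The only real subtlety is justifying the conditioning/Fubini steps rigorously: one must check measurability of the composed random fields and legitimacy of pulling the expectation over $(U_i)_i$ inside the $dt$-integral and inside $\mu_t(dx)$. This is precisely what \cref{prodRF_eval_nonneg}, \cref{contRF_eval_nonneg}, and \cref{contRF_eval_integrable} were set up to handle — the nonnegative versions for the squared terms, the integrable version for the (real-valued) cross terms — so the work is really just invoking them with the right choice of $Y$ (namely $\xi + \fwbm_t$, or the full Brownian path) and $S$. Everything else is the elementary variance-of-a-sum-of-independent-mean-equal-terms calculation.
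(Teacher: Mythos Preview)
Your proposal is correct and follows essentially the same route as the paper: condition on $\fwbm$ via the Fubini-type lemmas to reduce to a pointwise-in-$(t,x)$ variance computation, expand the square and kill the cross terms by independence and mean-zero, then use the pointwise identity $\E[|U_1(t,x)-v(t,x)|^2]=\E[|U_1(t,x)|^2]-|v(t,x)|^2$ for the final inequality. One small technical remark: the paper invokes \cref{prodRF_eval_nonneg} (with $\mathcal{G}=\sigma_\Omega((U_i)_i)$) rather than \cref{contRF_eval_nonneg}, since continuity of $(t,x)\mapsto\E[U_1(t,x)]$ is not assumed and hence $v_i=U_i-\E[U_i]$ is a priori only product-measurable, not a continuous random field; you already list \cref{prodRF_eval_nonneg} among your tools, so this is just a matter of picking the right one.
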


\begin{proof}[Proof of Lemma~\ref{a08}]
Throughout this proof let 
$\mathcal{G} \subseteq \mathcal{F}$ satisfy that $\mathcal{G} = \sigma_{\Omega}((U_i)_{i \in \{1, 2, \ldots, n\}})$,
let
$v_i \colon [0, T] \times \R^d \times \Omega \to \R$, $i \in \{1, 2, \ldots, n \}$, satisfy 
for all $i \in \{1, 2, \ldots, n \}$, $t \in [0,T]$, $x\in \R^d$ that
$
  v_i(t, x)
=
  U_i(t, x) - \EXP{ U_i(t, x)}
$,
and let $\mu_{t} \colon \mathcal{B}(\R^d) \to [0,1]$, $t \in [0,T]$, be the probability measures which satisfy 
for all $t \in [0,T]$, $B \in \mathcal{B}(\R^d)$ that
$
  \mu_t(B) 
=
  \P( \start + \fwbm_t \in B )
$.
Note that the fact that $U_1, \ldots, U_n$ are continuous random fields, Beck et al.~\cite[Lemma 2.4]{Becketal2018}, and Fubini's theorem imply that 
for every $i \in \{1, 2, \ldots, n \}$ it holds that 
$v_i$ is a $(\mathcal{B}([0, T] \times \R^d) \otimes \mathcal{G} ) / \mathcal{B}(\R)$-measurable function.
The hypothesis that $\mathcal{G}$ and $\fwbm$ are independent, \cref{prodRF_eval_nonneg}, 
the fact that 
for all $t \in [0,T]$, $x\in \R^d$ it holds that 
$v_1(t, x), v_2(t, x), \ldots,  v_n(t, x)$ are i.i.d.\ random variables with 
$\Exp{ | v_1(t, x)|} < \infty$ and $\Exp{  v_1(t, x)} = 0$,
and Klenke~\cite[Theorem~5.4]{k08b}
therefore demonstrate that
for all $t \in [0, T]$ it holds that
\begin{equation}
\label{a08:eq1}
\begin{split}
  \Exp{ \big| 
    \textstyle \sum_{i=1}^n
      r_i v_i(t, \xi + \fwbm_t)
  \big|^2 }
&=
  \int_{\R^d}
    \Exp{
      \big| 
        \textstyle \sum_{i=1}^n
          r_i v_i(t, x)
      \big|^2 
    }
  \, \mu_t( d x ) =
  \int_{\R^d}
    \textstyle \sum_{i,j=1}^n
    \Exp{\,
          r_i r_j v_i(t, x) v_j(t, x)
    }
  \, \mu_t( d x ) \\
&=
  \int_{\R^d}
      \textstyle \sum_{i=1}^n
        | r_i |^2 \, \Exp{ |  v_i(t, x) |^2 }
  \, \mu_t( d x )  =
  \textstyle \sum_{i=1}^n \left(
    | r_i |^2 
    \int_{\R^d}
      \Exp{ |v_i(t, x) |^2 }
    \, \mu_t( d x )  
  \right) \\
&=
  \Big[ \textstyle \sum_{i=1}^n  | r_i |^2 \Big]
    \int_{\R^d}
      \Exp{ |v_1(t, x) |^2 }
    \, \mu_t( d x )  =
  \Big[ \textstyle \sum_{i=1}^n  | r_i |^2 \Big]
  \Exp{ |v_1(t, \xi + \fwbm_t) |^2 }.
\end{split}
\end{equation}
This and \eqref{a03} imply that
\begin{equation}
\label{a08:eq2}
\begin{split}
  \big\| \textstyle \sum_{i=1}^n r_i\left(U_i-\E[U_i]\right) \! \big\|_{0}^2
&=
  \big\| \textstyle \sum_{i=1}^n r_iv_i\big\|_{0}^2
=
  \Exp{\big| \textstyle \sum_{i=1}^n r_i v_i (0, \xi) \big|^2} =
  \Big[ \textstyle \sum_{i=1}^n  | r_i |^2 \Big]
  \Exp{ |v_1(0, \xi ) |^2 }\\
&
=
  \Big[ \textstyle \sum_{i=1}^n  | r_i |^2 \Big]
  \| v_1 \|_0^2 =
  \Big[ \textstyle \sum_{i=1}^n  | r_i |^2 \Big]
  \| U_1 - \Exp{U_1} \! \|_0^2.
\end{split}
\end{equation}
Moreover, observe that \eqref{a03} and \eqref{a08:eq1} show that 
for all $l \in \N$ it holds that
\begin{equation}
\label{a08:eq3}
\begin{split}
  \big\| \textstyle \sum_{i=1}^n r_i\left(U_i-\E[U_i]\right) \! \big\|_{l}^2
&=
  \big\| \textstyle \sum_{i=1}^n r_iv_i\big\|_{l}^2 =
  \frac{1}{T^{l}} 
  \int_0^{T}
    \tfrac{t^{l-1}}{(l-1)!}\,  
    \Exp{ \big| \textstyle \sum_{i=1}^n r_iv_i (t, \xi + \fwbm_t) \big|^2 }
  dt \\
&=
  \frac{1}{T^{l}} 
  \int_0^{T}
    \tfrac{t^{l-1}}{(l-1)!}\,  
    \Big[ \textstyle \sum_{i=1}^n  | r_i |^2 \Big]
    \EXPP{ |v_1(t, \xi + \fwbm_t) |^2 }
  dt \\
&=
  \Big[ \textstyle \sum_{i=1}^n  | r_i |^2 \Big]
  \| v_1 \|_l^2 
=
  \Big[ \textstyle \sum_{i=1}^n  | r_i |^2 \Big]
  \| U_1 - \Exp{U_1} \! \|_l^2.
\end{split}
\end{equation}
Next observe that \eqref{a03} assures that
\begin{equation}
\begin{split}
\label{a08:eq4}
  \left\|U_1-\E[U_1]\right\|_0^2 
&= 
  \| v_1\|_0^2
=
  \E\!\left[| v_1(0,\start)|^2\right]
=
  \E\!\left[|U_1(0,\start)-\E\!\left[U_1(0,\start)\right]|^2\right] \\
&= 
  \E\!\left[|U_1(0,\start)|^2\right] - \left| \E\!\left[U_1(0,\start)\right] \right|^2 
\leq 
  \E\!\left[|U_1(0,\start)|^2\right]
= 
  \|U_1\|_0^2.
\end{split}
\end{equation}
Furthermore, note that the hypothesis that $\mathcal{G}$ and $\fwbm$ are independent and \cref{prodRF_eval_nonneg} assure that
for all $t \in [0, T]$ it holds that
\begin{equation}
\begin{split}
  \Exp{ |v_1(t, \xi + \fwbm_t) |^2 }
&=
  \int_{\R^d}
    \Exp{ |v_1(t, x) |^2 }
  \, \mu_t( d x ) =
  \int_{\R^d}
    \Exp{ |U_1(t, x) - \Exp{U_1(t, x)} |^2 }
  \, \mu_t( d x ) \\
&=
  \int_{\R^d}
     \Exp{ |U_1(t, x)|^2} - | \EXP{U_1(t, x)} |^2
  \, \mu_t( d x )\\
& \leq
  \int_{\R^d}
   \Exp{ | U_1(t, x) |^2 }
  \, \mu_t( d x ) 
=
  \Exp{ | U_1(t, \xi + \fwbm_t) |^2 }.
\end{split}
\end{equation}
This and \eqref{a03} demonstrate that 
for all $l \in \N$ it holds that
\begin{equation}
\begin{split}
  \| U_1-\E[U_1] \|_{l}^2
=
  \| v_1 \|_{l}^2
&=
  \frac{1}{T^{l}} 
  \int_0^{T}
    \tfrac{t^{l-1}}{(l-1)!}\,  
    \Exp{ |v_1(t, \xi + \fwbm_t) |^2 }
  dt \leq
  \frac{1}{T^{l}} 
  \int_0^{T}
    \tfrac{t^{l-1}}{(l-1)!}\,  
    \Exp{ | U_1(t, \xi + \fwbm_t) |^2 }
  dt
=
  \| U_1 \|_l^2.
\end{split}
\end{equation}
Combining this, \eqref{a08:eq2}, \eqref{a08:eq3}, and \eqref{a08:eq4} establishes that
\begin{equation}
  \big\| \textstyle \sum_{i=1}^nr_i\left(U_i-\E[U_i]\right) \! \big\|_{k}^2
= 
  \Big[ \textstyle\sum_{i=1}^n |r_i|^2 \Big]
  \left\|U_1-\E[U_1]\right\|_{k}^2
\leq 
  \Big[ \textstyle\sum_{i=1}^n|r_i|^2 \Big]
  \left\|U_1\right\|_{k}^2.
\end{equation}
This completes the proof of \cref{a08}.
\end{proof}

\begin{lemma}[Lipschitz property of $F$]
\label{a14}
Assume \cref{a00}, 
let $k\in\N_0$, 
and let $U,V \colon [0, T] \times \R^d \times \Omega \to \R$ be continuous random fields. 
Then 
\begin{enumerate}[(i)]
\item \label{a14:item1}
it holds that 
$
  F(U)
= 
  \big( 
    [0, T] \times \R^d \times \Omega  \ni (t, x, \omega) \mapsto 
     \big[ 
       F \big( [0, T] \times \R^d \ni (s, z) \mapsto U(s, z, \omega) \in \R \big) 
     \big] (t, x) 
     \in \R
  \big)
$
is a continuous random field 
and
\item \label{a14:item2}
it holds that
$
\|F(U)-F(V)\|_{k}
  \leq 
\LipConst\|U-V\|_{k}
$.
\end{enumerate}
\end{lemma}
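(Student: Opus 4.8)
The plan is to establish Item~\eqref{a14:item1} first and then obtain Item~\eqref{a14:item2} from a pointwise application of the Lipschitz estimate~\eqref{a01}. For Item~\eqref{a14:item1}, continuity in $(t,x)$ for every fixed $\omega\in\Omega$ is immediate: since $U$ is a continuous random field, the path $[0,T]\times\R^d\ni(s,z)\mapsto U(s,z,\omega)\in\R$ belongs to $C([0,T]\times\R^d,\R)$, and by assumption $F$ maps $C([0,T]\times\R^d,\R)$ into itself, so $(t,x)\mapsto F(U)(t,x,\omega)=[F(U(\cdot,\cdot,\omega))](t,x)$ is continuous. The substantive point is $\mathcal{F}/\mathcal{B}(\R)$-measurability of $\omega\mapsto F(U)(t,x,\omega)$ for fixed $(t,x)$. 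Here I would exploit that~\eqref{a01} forces $(F(u))(t,x)$ to depend on $u$ only through the single value $u(t,x)$: if $u,v\in C([0,T]\times\R^d,\R)$ satisfy $u(t,x)=v(t,x)$ then~\eqref{a01} gives $|(F(u))(t,x)-(F(v))(t,x)|\le\LipConst|u(t,x)-v(t,x)|=0$. Consequently there is a function $\phi\colon[0,T]\times\R^d\times\R\to\R$ with $(F(u))(t,x)=\phi(t,x,u(t,x))$ for all $u\in C([0,T]\times\R^d,\R)$, $t\in[0,T]$, $x\in\R^d$ (for instance, let $\phi(t,x,r)$ be the value of $F$ applied to the constant function $\equiv r$, evaluated at $(t,x)$), and applying~\eqref{a01} to constant functions shows $|\phi(t,x,r)-\phi(t,x,s)|\le\LipConst|r-s|$ for all $r,s\in\R$. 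Hence for fixed $(t,x)$ the map $\omega\mapsto F(U)(t,x,\omega)=\phi(t,x,U(t,x,\omega))$ is the composition of the random variable $U(t,x,\cdot)$ with the continuous (hence Borel measurable) function $\phi(t,x,\cdot)$, and is therefore a random variable. This proves that $F(U)$ is a continuous random field, and the identical argument applies to $F(V)$.

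For Item~\eqref{a14:item2}, I would first record the pointwise bound obtained by applying~\eqref{a01} with $u=U(\cdot,\cdot,\omega)$, $v=V(\cdot,\cdot,\omega)$ for each $\omega\in\Omega$, namely $|F(U)(t,x,\omega)-F(V)(t,x,\omega)|\le\LipConst|U(t,x,\omega)-V(t,x,\omega)|$ for all $(t,x,\omega)\in[0,T]\times\R^d\times\Omega$. Since, by Item~\eqref{a14:item1} and Beck et al.~\cite[Lemma 2.4]{Becketal2018}, $F(U)$ and $F(V)$ are jointly measurable, the random variables $F(U)(0,\start)-F(V)(0,\start)$ and, for each $t\in[0,T]$, $F(U)(t,\fwBM_t)-F(V)(t,\fwBM_t)$ are well defined and measurable, and the pointwise bound yields $|F(U)(0,\start)-F(V)(0,\start)|^2\le\LipConst^2|U(0,\start)-V(0,\start)|^2$ as well as $|F(U)(t,\fwBM_t)-F(V)(t,\fwBM_t)|^2\le\LipConst^2|U(t,\fwBM_t)-V(t,\fwBM_t)|^2$. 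Taking expectations, using monotonicity of the expectation, then (for $k\ge1$) integrating $\tfrac{t^{k-1}}{(k-1)!\,T^{k}}$ times the latter inequality over $t\in[0,T]$, and finally invoking the definition~\eqref{a03} of $\|\cdot\|_k$ applied both to $F(U)-F(V)$ and to $U-V$, gives $\|F(U)-F(V)\|_k^2\le\LipConst^2\|U-V\|_k^2$, which is the claim. (Joint measurability also ensures that $t\mapsto\E[|F(U)(t,\fwBM_t)-F(V)(t,\fwBM_t)|^2]$ is measurable, so the integral in~\eqref{a03} is meaningful.)

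The only genuinely nontrivial step is the measurability assertion inside Item~\eqref{a14:item1}; the ``$(F(u))(t,x)$ depends only on $u(t,x)$'' observation above disposes of it cleanly, and an alternative would be to note that $u\mapsto(F(u))(t,x)$ is continuous for the topology of locally uniform convergence on $C([0,T]\times\R^d,\R)$ and to compose with the $C([0,T]\times\R^d,\R)$-valued random variable associated to $U$ via~\cite[Lemma 2.4]{Becketal2018}. Everything else is pointwise estimation together with unwinding the definition~\eqref{a03}.
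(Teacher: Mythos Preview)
Your proof is correct. For Item~\eqref{a14:item2} your argument is essentially identical to the paper's: apply~\eqref{a01} pointwise and plug into the definition~\eqref{a03}, treating $k=0$ and $k\ge 1$ separately.

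For the measurability part of Item~\eqref{a14:item1} you take a genuinely different route. The paper argues at the level of the function space: it views the path map $\mathfrak{U}\colon\Omega\to C([0,T]\times\R^d,\R)$, $\omega\mapsto U(\cdot,\cdot,\omega)$, as an $\mathcal{F}/\mathcal{B}(C([0,T]\times\R^d,\R))$-measurable random variable, notes that~\eqref{a01} makes $F$ Lipschitz (hence Borel) as a self-map of $C([0,T]\times\R^d,\R)$, and obtains $\omega\mapsto (F(U))(t,x,\omega)$ as the composition $\pi_{t,x}\circ F\circ\mathfrak{U}$ of measurable maps. You instead exploit the \emph{pointwise} structure of~\eqref{a01} to factor $(F(u))(t,x)=\phi(t,x,u(t,x))$ through a scalar function $\phi$ that is Lipschitz in its last argument, reducing the question to composing the single random variable $U(t,x,\cdot)$ with the continuous map $\phi(t,x,\cdot)$. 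Your approach is more elementary and makes explicit that $F$ is really a Nemytskii operator here; the paper's approach is cleaner to state and would still work for operators $F$ that are continuous on $C([0,T]\times\R^d,\R)$ without being pointwise. Amusingly, the ``alternative'' you sketch in your final paragraph is precisely the paper's argument.
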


\begin{proof}[Proof of \cref{a14}]
Throughout this proof let $\pi_{t, x} \colon C([0, T] \times \R^d, \R) \to \R$, $t \in [0, T]$, $x \in \R^d$, satisfy 
for all $t \in [0, T]$, $x \in \R^d$, $v \in C([0, T] \times \R^d, \R)$ that
$
  \pi_{t, x} (v) = v(t, x)
$
and let $\mathfrak{U} \colon \Omega \to C([0, T] \times \R^d, \R)$ satisfy 
for all $\omega \in \Omega$, $t \in [0, T]$, $x \in \R^d$ that 
$
  (\mathfrak{U}(\omega)) (t, x) 
=
  U(t, x, \omega)
$.
Note that
for all $t \in [0, T]$, $x \in \R^d$, $\omega \in \Omega$ it holds that
$
  (F(U))(t, x, \omega) 
=
  \big[ F(\mathfrak{U}(\omega)) \big] (t, x)
=
  \pi_{t, x} \big[ F(\mathfrak{U}(\omega))\big]
$.
Hence, we obtain 
for all $t \in [0, T]$, $x \in \R^d$ that
$
  \big(
    \Omega \ni \omega \mapsto (F(U))(t, x, \omega) 
  \big)
=
  \pi_{t, x} \circ  F \circ \mathfrak{U}
$.
The fact that $\mathfrak{U}$ is $\mathcal{F} $/$ \mathcal{B}(C([0, T] \times \R^d, \R))$-measurable,
the fact that for all $t \in [0, T]$, $x \in \R^d$ it holds that $\pi_{t, x}$ is $\mathcal{B}(C([0, T] \times \R^d, \R)) $/$ \mathcal{B}(\R)$-measurable, 
and 
the fact that $F$ is 
$\mathcal{B}(C([0, T] \times \R^d, \R)) $/$ \mathcal{B}(C([0, T] \times \R^d, \R))$-measurable (cf.\ \eqref{a01}) 
hence assure that for all $t \in [0, T]$, $x \in \R^d$ it holds that 
$ 
  (
    \Omega \ni \omega \mapsto (F(U))(t, x, \omega) 
  )
$
is $ \mathcal{F} $/$ \mathcal{B}(\R)$-measurable.
Combining this with the fact that 
for all $v \in C([0, T] \times \R^d, \R)$
it holds that $F(v) \in C([0, T] \times \R^d, \R)$ demonstrates that $F(U)$ is a continuous random field. 
This establishes Item~\eqref{a14:item1}.
Next observe that \eqref{a01} and \eqref{a03} show that
\begin{equation}
\label{a18}
\begin{split}
\left\|F(U)-F(V)\right\|_{0}^2&=
\E\!\left[\left|(F(U)-F(V))(0,\start)\right|^2\right] \leq  
\E\!\left[ \LipConst^2 
\left|(U-V)(0,\start)\right|^2\right]=\LipConst^2\left\|U-V\right\|_{0}^2.
\end{split}
\end{equation}
Moreover, note that  \eqref{a01} and \eqref{a03} imply that 
for all $l \in \N$ it holds that
\begin{equation}
\begin{split}
\|F(U)-F(V)\|_{l}^2&=
\frac{1}{T^l}\int_0^{T}\tfrac{t^{l-1}}{(l-1)!}\,\E\!\left[\left|\big(F(U)-F(V)\big)(t,\fwBM_{t}))\right|^2\right]\,dt
\\
 & \leq
  \frac{1}{T^l}\int_0^{T}\tfrac{t^{l-1}}{(l-1)!}\,
  \E\!\left[\LipConst^2\big|(U-V)(t,\fwBM_{t}))\big|^2\right]\,dt
=  L^2
    \|U-V\|_{l}^2.
\end{split}
\end{equation} 
Combining this and \eqref{a18} establishes Item~\eqref{a14:item2}. 
The proof of \cref{a14} is thus completed.
\end{proof}

\begin{lemma}[Monte Carlo time integrals]\label{a26}
Assume \cref{a00},
let $k\in\N_0$,
let $U \colon [0, T] \times \R^d \times \Omega \to \R$ be a continuous random field, 
let $ \unif\colon \Omega\to [0,1]$ be a $\mathcal{U}_{[0,1]}$-distributed random variable, 
let $\uniform\colon [0,T]\times \Omega\to \R$ satisfy 
for all $t\in [0,T]$ that 
$\uniform _t = t+ (T-t)\unif$,
let $\crbm\colon [0,T]\times\Omega\to\R^d$ be a standard Brownian motion with continuous sample paths, 
and assume that $U, \fwbm, \unif$, and $\crbm$ are independent.   
Then it holds that
\begin{align}  
\label{a27}
  \left\| [0,T]\times\R^d\times \Omega\ni (t,x,\omega) \mapsto (T-t) \big[U(\uniform_t,x+ \crbm_{\uniform_t}-\crbm _{t})\big](\omega)\in\R\right\|_{k}\leq T \|U\|_{k+1}.
\end{align}
\end{lemma}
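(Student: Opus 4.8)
The plan is to compute the left-hand side of \eqref{a27} exactly and then reduce the claim to an elementary inequality between polynomials. I abbreviate
\[
  V(t,x,\omega) = (T-t)\,U\big(\uniform_t(\omega),\,x + \crbm_{\uniform_t(\omega)}(\omega) - \crbm_t(\omega),\,\omega\big)
\]
and, for $s\in[0,T]$, I set $g(s) = \Exp{|U(s,\fwBM_s)|^2}\in[0,\infty]$, so that $T^{j}\|U\|_{j}^2 = \int_0^T \tfrac{s^{j-1}}{(j-1)!}\,g(s)\,ds$ for every $j\in\N$. First I would record that $V$ is a continuous random field (a composition of the continuous random field $U$ with the sample-path-continuous process $t\mapsto(\uniform_t,\,\crbm_{\uniform_t}-\crbm_t)$, joint measurability via \cite[Lemma~2.4]{Becketal2018}), so that $\|V\|_k$ in \eqref{a03} is well defined; if $\|U\|_{k+1}=\infty$ there is nothing to prove, so one may also assume $\|U\|_{k+1}<\infty$.

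The crucial step will be the identity
\[
  \Exp{|V(t,\fwBM_t)|^2} = (T-t)\int_t^T g(s)\,ds
  \qquad\text{for all }t\in[0,T]
\]
(with the convention $\fwBM_0=\start$). To establish it I fix $t\in[0,T)$ (the case $t=T$ being trivial since then $V(T,\cdot)\equiv 0$), put $Y=(\uniform_t,\,\fwBM_t + \crbm_{\uniform_t}-\crbm_t)$, and observe that $|U|^2$ is a nonnegative continuous random field which, by the hypothesis that $U,\fwbm,\unif,\crbm$ are independent, is independent of $Y$; hence \cref{contRF_eval_nonneg} applied to $|U|^2$ yields $\Exp{|V(t,\fwBM_t)|^2} = (T-t)^2\Exp{|U(Y)|^2} = (T-t)^2\int_{[0,T]\times\R^d}\Exp{|U(s,x)|^2}\,(Y(\P))(d(s,x))$. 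Next I would compute the law of $Y$ by conditioning on $\unif$: since $\fwbm,\crbm,\unif$ are independent and $\crbm$ has independent increments, conditionally on $\{\unif=u\}$ the vector $\fwbm_t + (\crbm_{t+(T-t)u}-\crbm_t)$ is the sum of two independent centered Gaussian vectors with covariance matrices $t\,I_d$ and $(T-t)u\,I_d$, hence is $N(0,(t+(T-t)u)I_d)$-distributed; therefore, conditionally on $\{\unif=u\}$, $Y$ has the law $\delta_{t+(T-t)u}\otimes N(\start,(t+(T-t)u)I_d)$, i.e.\ the same law as $(s,\fwBM_s)$ with $s=\uniform_t$. Consequently $\Exp{|U(Y)|^2} = \int_0^1 g\big(t+(T-t)u\big)\,du = \tfrac{1}{T-t}\int_t^T g(s)\,ds$ (the last equality being the substitution $s=t+(T-t)u$), and multiplying by $(T-t)^2$ gives the displayed identity.

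It then remains to feed this identity into \eqref{a03}. For $k=0$ one obtains directly $\|V\|_0^2 = \Exp{|V(0,\start)|^2} = T\int_0^T g(s)\,ds = T^2\|U\|_1^2$, i.e.\ $\|V\|_0 = T\|U\|_1$. For $k\ge 1$, Tonelli's theorem applied on the region $\{0\le t\le s\le T\}$ gives
\[
  \|V\|_k^2 = \frac{1}{T^k}\int_0^T \frac{t^{k-1}}{(k-1)!}\,(T-t)\!\left(\int_t^T g(s)\,ds\right)dt
  = \frac{1}{T^k}\int_0^T g(s)\!\left(\int_0^s \frac{t^{k-1}}{(k-1)!}\,(T-t)\,dt\right)ds,
\]
and the inner integral equals $\tfrac{T s^k}{k!} - \tfrac{s^{k+1}}{(k+1)(k-1)!}\le \tfrac{T s^k}{k!}$, whence $\|V\|_k^2 \le \tfrac{1}{T^{k-1}}\int_0^T \tfrac{s^k}{k!}\,g(s)\,ds = T^2\|U\|_{k+1}^2$, which is \eqref{a27} (in the case $k=0$ it is in fact an equality).

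The one genuinely delicate point is the conditional distributional identity for $Y$: it rests on the full mutual independence of $U,\fwbm,\unif,\crbm$ together with the independent-increments property of $\crbm$, which is exactly what makes ``resampling the time by $\uniform_t$ and restarting the auxiliary Brownian path at time $t$'' reproduce a genuine $N(0,\uniform_t I_d)$ displacement in the spatial argument of $U$. Everything else is Tonelli's theorem plus the elementary estimate $\int_0^s \tfrac{t^{k-1}}{(k-1)!}(T-t)\,dt \le \tfrac{T s^k}{k!}$.
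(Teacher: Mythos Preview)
Your proof is correct and follows essentially the same route as the paper: both arguments establish the key identity $\Exp{|V(t,\fwBM_t)|^2}=(T-t)\int_t^T g(s)\,ds$ by exploiting that, after integrating out the uniform time $\uniform_t$, the spatial displacement $\fwbm_t+\crbm_{\uniform_t}-\crbm_t$ has the same law as $\fwbm_s$ with $s=\uniform_t$, and then both conclude via Tonelli on $\{0\le t\le s\le T\}$ together with the elementary bound $(T-t)\le T$ (which is exactly your inequality $\int_0^s \tfrac{t^{k-1}}{(k-1)!}(T-t)\,dt\le \tfrac{T s^k}{k!}$). The only cosmetic difference is that the paper separates the two integrations by introducing the auxiliary fields $V^{(t)}_s=U(s,\xi+\fwbm_t+\crbm_s-\crbm_t)$ and applying \cref{contRF_eval_nonneg} twice (once to identify $\Exp{|V^{(t)}_s|^2}=g(s)$, once to integrate over $\uniform_t$), whereas you compute the law of $Y=(\uniform_t,\fwBM_t+\crbm_{\uniform_t}-\crbm_t)$ in one step by conditioning on $\unif$.
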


\begin{proof}[Proof of Lemma~\ref{a26}]
Throughout this proof let 
$V^{(t)} = (V^{(t)}_s(\omega))_{s \in [t, T], \omega \in \Omega} \colon [t, T] \times  \Omega \to \R$, $t \in [0, T]$, 
be the random fields which satisfy 
for all $t \in [0, T]$, $s \in [t, T]$ that
$
  V^{(t)}_s
=
  U(s,\xi+ \fwbm_t + \crbm_{s}-\crbm_{t})
$.
Observe that the fact that $\fwbm$, $\crbm$, and $U$ are independent, the hypothesis that $U$ is a continuous random field, Lemma~\ref{contRF_eval_nonneg}, and the fact that 
for all $t \in [0, T]$, $s \in [t, T]$ it holds that
$
  \fwbm_t + \crbm_{s}-\crbm_{t} 
$
and
$
  \fwbm_s
$
are identically distributed ensure that
for all $t \in [0, T]$, $s \in [t, T]$ it holds that
\begin{equation}
\label{a26:eq0}
\begin{split}
  \Exp{| V^{(t)}_s|^2}
&=
  \Exp{|  U(s,\xi+ \fwbm_t + \crbm_{s}-\crbm_{t})|^2} =
  \int_{\R^d}
    \Exp{|  U(s,\xi+ x)|^2} 
  ((\fwbm_t + \crbm_{s}-\crbm_{t} )(\P)_{\mathcal{B}(\R^d)})(dx)  \\
&=
  \int_{\R^d}
    \Exp{|  U(s,\xi+ x)|^2} 
  ((\fwbm_s )(\P)_{\mathcal{B}(\R^d)})(dx)  
=
  \Exp{|  U(s,\xi+ \fwbm_s)|^2}.
\end{split}
\end{equation}
The fact that $V^{(0)}$ is a continuous random field, the fact that $V^{(0)}$ and $\uniform_0$ are independent, Lemma~\ref{contRF_eval_nonneg}, the fact that $\uniform_0$ is uniformly distributed on $[0,T]$, and \eqref{a03} hence establish that
\begin{equation}
\label{a26:eq1}
\begin{split}
  &\left\| 
    [0,T]\times\R^d\times \Omega\ni (t,x,\omega) 
    \mapsto 
    (T-t) \big[U(\uniform_t,x+ \crbm_{\uniform_t}-\crbm _{t})\big](\omega) \in \R
  \right\|_{0}^2 =
  \Exp{
    |T U(\uniform_0, \xi + \crbm_{\uniform_0})|^2
  } \\
  &
=
  T^2 \,
  \EXPP{
    | V^{(0)}_{\uniform_0}|^2
  } 
=
  \frac{T^2}{T}
  \int_0^T
    \EXPP{
      | V^{(0)}_{t}|^2
    } 
  \, dt =
  \frac{T^2}{T}
  \int_0^T
    \Exp{|  U(t,\xi+ \fwbm_t)|^2}
  \, dt
=
   T^2 \|U\|_{1}^2.
\end{split}
\end{equation}
In addition, observe that
the fact that $(V^{(t)})_{t \in [0, T]}$ and $\uniform$ are independent, 
the fact that $V^{(t)}$, $t \in [0, T]$, are continuous random fields,
the fact that for all $t \in [0, T]$ it holds that 
$\uniform_t$ is uniformly distributed on $[t, T]$,
Lemma~\ref{contRF_eval_nonneg},
Tonelli's theorem,
and \eqref{a26:eq0}
demonstrate that
for all $l \in \N$ it holds that
\begin{equation}
\begin{split}
  &\left\|[0,T]\times\R^d\times \Omega\ni (t,x,\omega)\mapsto (T-t) \left[U(\uniform_t,x+\crbm_{\uniform_t}-\crbm_{t})\right](\omega)\in\R\right\|_{l}^2\\
&=
  \frac{1}{T^{l}} 
  \int_0^{T}
    \tfrac{t^{l-1}}{(l-1)!}  \,
    \Exp{
      \left|  (T-t)U(\uniform_t,\fwBM _t+\crbm_{\uniform_t}-\crbm_{t}) \right|^2
    }
  \, dt\\
&=
  \frac{1}{T^{l}} 
  \int_0^{T}
    \tfrac{t^{l-1}}{(l-1)!}  \,
    (T-t)^2 \,
    \EXPP{
      \big|  V^{(t)}_{\uniform_t}\big|^2
    }
  \, dt\\
&=
  \frac{1}{T^{l}} 
  \int_0^{T}
    \tfrac{t^{l-1}}{(l-1)!}  \,
    (T-t)^2 \,
    \tfrac{1}{(T-t)}
    \int_t^{T}
      \EXPP{
        \big|  V^{(t)}_{s}\big|^2
      }
    \, ds
  \, dt\\
&=
  \frac{1}{T^{l}} 
  \int_0^{T}
    \int_0^{T}
      \mathbbm{1}_{ \{  (\mathfrak{t}, \mathfrak{s}) \in [0,T]^2 \colon \mathfrak{t} \leq \mathfrak{s}  \}}(t, s)
      \tfrac{t^{l-1}}{(l-1)!}  \,
      (T-t)\,
        \Exp{|  U(s,\xi+ \fwbm_s)|^2}
    \, dt
  \, ds\\
&\leq
  \frac{T}{T^{l}} 
  \int_0^{T}
    \int_0^{s}
      \tfrac{t^{l-1}}{(l-1)!}  \,
    \, dt \,
      \Exp{|  U(s,\xi+ \fwbm_s)|^2}
  \, ds\\
&=
  \frac{T^2}{T^{l+1}} 
  \int_0^{T}
      \tfrac{s^{l}}{l!}  \,
      \Exp{|  U(s,\xi+ \fwbm_s)|^2}
  \, ds
=
  T^2 \| U\|_{l+1}.
\end{split}
\end{equation}
Combining this and \eqref{a26:eq1} establishes \eqref{a27}.
The proof of  \cref{a26} is thus completed.%
\end{proof}
\begin{lemma}\label{a36}
Assume \cref{a00}, let $k\in\N_0$,  
let
$g \colon \R^d \to \R$ be a $\mathcal{B}(\R^d) /\mathcal{B}(\R)$-measurable function, 
let
$v \colon [0, T] \times \R^d \to \R$ be a $\mathcal{B}([0, T] \times \R^d) /\mathcal{B}(\R)$-measurable function,
let $\crbm\colon[0,T]\times \Omega\to\R^d$ be a standard Brownian motion with continuous sample paths,
and
 assume that $\crbm$ and $\fwbm$ are independent.
Then it holds that
\begin{enumerate}[(i)]
\item \label{a36:item1}

$
\left\|[0,T]\times\R^d\times\Omega\ni (t,x,\omega)\mapsto g(x+\crbm_T(\omega)-\crbm_t(\omega))\in\R\right\|_{k}^2 
    =\tfrac{1}{k!}\E\!\left[\left|g(\fwBM _T)\right|^2\right]\label{a37}
$
and 
\item \label{a36:item2}
$
\left\|v\right\|_{k}\leq \tfrac{1}{\sqrt{k!}}
\left(
  \sup_{t\in[0,T]} \left(\E\!\left[\left|v(t,\fwBM_t)\right|^2\right]\right)^{\! \nicefrac{1}{2}}
\right)
$.
\end{enumerate}
\end{lemma}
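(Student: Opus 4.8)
The plan is to evaluate both semi-norms directly from the definition \eqref{a03}, where for \cref{a36:item1} the one nontrivial ingredient is the elementary distributional identity $\fwbm_t+(\crbm_T-\crbm_t)\overset{d}{=}\fwbm_T$, which follows from the independence of $\fwbm$ and $\crbm$ together with the fact that the convolution of an $N(0,t\operatorname{Id}_{\R^d})$-distribution with an independent $N(0,(T-t)\operatorname{Id}_{\R^d})$-distribution equals an $N(0,T\operatorname{Id}_{\R^d})$-distribution.

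For \cref{a36:item1} I would first set $V\colon[0,T]\times\R^d\times\Omega\to\R$ to be the random field given by $V(t,x,\omega)=g(x+\crbm_T(\omega)-\crbm_t(\omega))$, and observe, applying \cite[Lemma 2.4]{Becketal2018} to the continuous random field $(t,x,\omega)\mapsto x+\crbm_T(\omega)-\crbm_t(\omega)$ and composing with the $\mathcal{B}(\R^d)/\mathcal{B}(\R)$-measurable function $g$, that $V$ is $(\mathcal{B}([0,T]\times\R^d)\otimes\mathcal{F})/\mathcal{B}(\R)$-measurable, so that $\|V\|_k$ is well defined. The key step is then to note that for every $t\in[0,T]$ the random vector $\fwbm_t$ (which is $\sigma(\fwbm)$-measurable and $N(0,t\operatorname{Id}_{\R^d})$-distributed) is independent of $\crbm_T-\crbm_t$ (which is $\sigma(\crbm)$-measurable and $N(0,(T-t)\operatorname{Id}_{\R^d})$-distributed), whence $\fwbm_t+\crbm_T-\crbm_t$ has the same law as $\fwbm_T$; consequently $V(t,\fwBM_t)=g(\start+\fwbm_t+\crbm_T-\crbm_t)$ has the same law as $g(\start+\fwbm_T)=g(\fwBM_T)$, so that $\E[|V(t,\fwBM_t)|^2]=\E[|g(\fwBM_T)|^2]$ for all $t\in[0,T]$ (in particular, at $t=0$, using $\fwbm_0=0$, one has $\E[|V(0,\start)|^2]=\E[|g(\fwBM_T)|^2]$). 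Plugging this into \eqref{a03} settles the case $k=0$ immediately, and for $k\geq1$ gives $\|V\|_k^2=\tfrac{1}{T^k}\bigl(\int_0^T\tfrac{t^{k-1}}{(k-1)!}\,dt\bigr)\E[|g(\fwBM_T)|^2]=\tfrac{1}{k!}\E[|g(\fwBM_T)|^2]$, using $\int_0^T\tfrac{t^{k-1}}{(k-1)!}\,dt=\tfrac{T^k}{k!}$.

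For \cref{a36:item2} I would simply bound the integrand in \eqref{a03}: for $k\geq1$ one has $\E[|v(t,\fwBM_t)|^2]\leq\sup_{s\in[0,T]}\E[|v(s,\fwBM_s)|^2]$ for Lebesgue-a.e.\ $t\in(0,T)$, hence $\|v\|_k^2\leq\tfrac{1}{T^k}\bigl(\int_0^T\tfrac{t^{k-1}}{(k-1)!}\,dt\bigr)\sup_{s\in[0,T]}\E[|v(s,\fwBM_s)|^2]=\tfrac{1}{k!}\sup_{s\in[0,T]}\E[|v(s,\fwBM_s)|^2]$; and for $k=0$, since $v$ is deterministic, $\|v\|_0^2=|v(0,\start)|^2=\E[|v(0,\fwBM_0)|^2]\leq\sup_{s\in[0,T]}\E[|v(s,\fwBM_s)|^2]$. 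Taking square roots in $[0,\infty]$ yields the claim.

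I do not expect a serious obstacle: beyond checking the joint measurability of the map in \cref{a36:item1} so that its semi-norm is meaningful, the whole argument reduces to the Gaussian convolution identity $\fwbm_t+(\crbm_T-\crbm_t)\overset{d}{=}\fwbm_T$ and the evaluation of the elementary integral $\int_0^T\tfrac{t^{k-1}}{(k-1)!}\,dt=\tfrac{T^k}{k!}$.
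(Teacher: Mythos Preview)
Your proposal is correct and follows essentially the same approach as the paper's own proof: both reduce \cref{a36:item1} to the distributional identity $\fwbm_t+(\crbm_T-\crbm_t)\overset{d}{=}\fwbm_T$ and then evaluate $\int_0^T\tfrac{t^{k-1}}{(k-1)!}\,dt=\tfrac{T^k}{k!}$, and both handle \cref{a36:item2} by bounding the integrand uniformly and computing the same integral. Your explicit measurability check for $V$ is a small refinement the paper leaves implicit.
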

\begin{proof}[Proof of \cref{a36}]
First, observe that \cref{a03} and the fact that 
$\crbm_T - \crbm_0 = \crbm_T$ and $\fwbm_T$ are identically distributed
ensure that 
\begin{equation}
 \label{a39}
\begin{split}
&\left\|[0,T]\times\R^d\times\Omega\ni (t,x,\omega)\mapsto g(x+\crbm_T(\omega)-\crbm_t(\omega))\in\R\right\|_{0}^2=\E\!\left[\left|g(\start+\crbm_T - \crbm_0)\right|^2\right]
    =\E\!\left[\left|g(\start+\fwbm_T)\right|^2\right].
\end{split}
\end{equation}%
Next note that the fact that $\fwbm$ and $\crbm$ are independent standard Brownian motions assures that
for all $t \in [0,T]$ the random variables
$\fwbm_T = \fwbm_{t}+ \fwbm_T- \fwbm_t$ and $
\fwbm _{t}+ \crbm_T- \crbm_{t}$ are identically distributed. 
The definition of the semi-norm in \cref{a03} therefore shows that 
for all $l \in \N$ it holds that
\begin{align}
&\left\|[0,T]\times\R^d\times\Omega\ni (t,x,\omega)\mapsto g(x+\crbm_T(\omega)-\crbm_t(\omega))\in\R\right\|_{l}^2
 =
\frac{1}{T^{l}}
      \int_0^{T}\tfrac{t^{l-1}}{(l-1)!}\;
      \E\!\left[\left|g(\fwBM _{t}+\crbm_T-\crbm_{t})\right|^2\right]dt \nonumber
      \\
&=\left[
\frac{1}{T^{l}}
      \int_0^{T}\tfrac{t^{l-1}}{(l-1)!}\,dt\right]\E\!\left[\left|g(\fwBM_T)\right|^2\right]=
  \left[\frac{T^l }{T^{l} l!} \right]\E\!\left[\left|g(\fwBM_T)\right|^2\right]
=
   \frac{\E\!\left[\left|g(\fwBM _T)\right|^2\right]}{l!} .
\end{align}
Combining this and \eqref{a39} proves Item~\eqref{a36:item1}.
Next note that \eqref{a03} implies that
\begin{align}
\label{a36:eq1}
\|v\|_0^2 
=\E\!\left[\left|v(0,\start)\right|^2\right]
=
  \E\!\left[\left| v(0,\fwBM_0)\right|^2\right]
\leq \sup_{t\in[0,T]}\E\!\left[\left|v(t, \start + \fwbm_t)\right|^2\right].
\end{align}
Furthermore, observe that \cref{a03} ensures that 
for all $l \in \N$ it holds that
\begin{align}
\begin{aligned}
\|v\|_{l}^2
&=\frac{1}{T^{l}}\int_0^T \frac{t^{l-1}}{(l-1)!}\;\E\!\left[\left|v(t,\fwBM_t)\right|^2 \right]dt
\leq \left[
\frac{1}{T^{l}}\int_0^T \frac{t^{l-1}}{(l-1)!}\,dt\right]
\sup_{t\in[0,T]}\E\!\left[\left|v(t,\fwBM_t)\right|^2\right] \\
&= 
  \left[\frac{T^l }{T^{l} l!} \right]
  \sup_{t\in[0,T]}\E\!\left[\left|v(t,\fwBM_t)\right|^2\right]
=
\frac{1}{l!}
 \left(\sup_{t\in[0,T]} \E\!\left[\left|v(t,\fwBM_t)\right|^2\right] \right).
\end{aligned}
\end{align}
This and \eqref{a36:eq1} establish Item~\eqref{a36:item2}.
The proof of \cref{a36} is thus completed.
\end{proof}
\section[Multilevel Picard approximations]{Convergence rates for multilevel Picard approximations for semilinear heat equations}
\label{b34}
In this section we develop the overall complexity analysis for the proposed numerical approximation algorithms to establish Theorem~\ref{thm1} in Subsection~\ref{t5} below. More formally, in Subsection~\ref{t1} we formulate the MLP approximation algorithms proposed in this work and the framework which we employ in our error analysis for the proposed MLP approximation algorithms.
In Subsection~\ref{t2} we establish several basic properties of the proposed MLP approximation algorithms and in Subsection~\ref{t3} we prove a priori estimates for the solutions of the PDEs under consideration. Our error analysis for the proposed MLP approximation algorithms can be found in Subsection~\ref{t4}. In Subsection~\ref{t5} we combine this error analysis with a computational cost analysis for the proposed MLP approximation algorithms to accomplish the overall complexity analysis for the proposed MLP approximation algorithms.

%
%
%
\subsection{Setting}\label{t1}
In this subsection we formulate the MLP approximation algorithms and introduce the framework which we employ in our error analysis for the proposed MLP approximation algorithms.

\begin{setting}\label{b35}
Assume \cref{a00}, let
$g \in C( \R^d, \R)$, $u \in C( [0,T]\times \R^d , \R)$ satisfy
for all $t\in[0,T]$, $x\in\R^d$ that
\begin{equation}
\label{b36b}
\begin{split}
  &\E\big[
    |g(x+\fwbm_{t})|
  \big]
+
  \int_0^T 
    \left(\E\!\left[\left|u(s,\xi+\fwbm_s)\right|^2\right]\right)^{\!\nicefrac{1}{2}}
  \,ds \\
&\quad
  +
  \int_t^T 
  \E\big[
    \left|
      (\funcF(u))(s,x+\fwbm_{s-t})
    \right|
    +
    \left|
      (\funcF(0))(s,x+\fwbm_{s-t})
    \right|
  \big] \, ds
  < \infty
\end{split}
\end{equation}
\begin{equation}
\label{b36}
\text{and} \qquad
  u(t,x)
= 
  \Exp{
    g(x+\fwbm_{T-t}) 
    +
    \int_t^{T}
      (\funcF(u))(s,x+\fwbm_{s-t})
    \,ds
  },
\end{equation}
let
$
  \Theta = \cup_{ n \in \N } \Z^n
$,
let
$
  W^{ \theta } \colon [0,T] \times \Omega \to \R^d 
$,
$ \theta \in \Theta $,
be independent standard Brownian motions
with continuous sample paths,
let $\unif^\theta\colon \Omega\to[0,1]$, $\theta\in \Theta$, be independent $\mathcal{U}_{[0,1]}$-distributed random variables, 
assume that $(W^\theta)_{\theta \in \Theta}$, $(\unif^\theta)_{\theta \in \Theta}$, and $\fwbm$ are independent,
let $\uniform^\theta\colon [0,T]\times \Omega\to [0, T]$, $\theta\in\Theta$, satisfy 
for all $t\in [0,T]$, $\theta\in \Theta$ that 
$\uniform^\theta _t = t+ (T-t)\unif^\theta$,
and
let
$ 
  { U}_{ n,M}^{\theta } \colon [0, T] \times \R^d \times \Omega \to \R
$, $n,M\in\Z$, $\theta\in\Theta$,
satisfy
for all $n,M \in \N$, $\theta\in\Theta $, 
$ t \in [0,T]$, $x\in\R^d $
that $U_{-1,M}^{\theta}(t,x)=
U_{0,M}^{\theta}(t,x)=0$ and 
\begin{equation}
\begin{split}\label{b37}
  &U_{n,M}^{\theta}(t,x)
=
  \frac{1}{M^n}
  \left[ \sum_{i=1}^{M^n} 
      g\big(x+W^{(\theta,0,-i)}_T-W^{(\theta,0,-i)}_t\big)
  \right] \\
&  +
  \sum_{l=0}^{n-1} \frac{(T-t)}{M^{n-l}}
    \left[\sum_{i=1}^{M^{n-l}}
      \big(\funcF(U_{l,M}^{(\theta,l,i)})-\1_{\N}(l)\funcF( U_{l-1,M}^{(\theta,-l,i)})\big)
      \big(\uniform_t^{(\theta,l,i)},x+W_{\uniform_t^{(\theta,l,i)}}^{(\theta,l,i)}-W_t^{(\theta,l,i)}\big)
    \right].
\end{split}
\end{equation}
\end{setting}
\subsection{Properties of the approximations}\label{t2}

In this subsection we establish several basic properties of the in Subsection~\ref{t1} introduced MLP approximation algorithms.

\begin{lemma}
\label{properties_approx}
Assume \cref{b35}. 
Then
\begin{enumerate}[(i)]
\item  \label{properties_approx:item1}
it holds 
for all $n \in \N_0$, $M \in \N$, $\theta\in\Theta $ that
$
  { U}_{ n,M}^{\theta } \colon [0, T] \times \R^d \times \Omega \to \R
$
is a continuous random field,

\item  \label{properties_approx:item2}
it holds for all $n \in \N_0$, $M \in \N$, $\theta \in \Theta$ that
$
  \sigma_\Omega( U^\theta_{n, M} )
\subseteq
  \sigma_\Omega( (\unif^{(\theta, \vartheta)})_{\vartheta \in \Theta}, (W^{(\theta, \vartheta)})_{\vartheta \in \Theta}) )
$,

\item  \label{properties_approx:item3}
it holds 
for all $n \in \N_0$, $M \in \N$, $\theta\in\Theta $ that
${ U}_{ n,M}^{\theta }$, $W^\theta$, and $\unif^\theta$ are independent,

\item  \label{properties_approx:item4}
it holds
for all $n, m \in \N_0$, $M \in \N$, $i,j,k,l, \in \Z$, $\theta \in \Theta$ with $(i,j) \neq (k,l)$ 
that
$
  U^{(\theta,i,j)}_{n,M}
$
and
$
  U^{(\theta,k,l)}_{m,M}
$
are independent,
and
\item  \label{properties_approx:item5}
it holds 
for all $n \in \N_0$, $M \in \N$ that 
$
  (U^\theta_{n, M})_{\theta \in \Theta}
$
are identically distributed.
\end{enumerate}
\end{lemma}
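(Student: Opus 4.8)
The plan is to prove all five items simultaneously by strong induction on $n \in \N_0$, since the definition~\eqref{b37} expresses $U^\theta_{n,M}$ in terms of the random fields $U^{(\theta,l,i)}_{l,M}$ and $U^{(\theta,-l,i)}_{l-1,M}$ with $l \le n-1$, together with fresh Brownian motions $W^{(\theta,0,-i)}$, $W^{(\theta,l,i)}$ and uniform variables $\unif^{(\theta,l,i)}$ carrying indices that are strict extensions of $\theta$. The base case $n \in \{0\}$ (and the convention for $n = -1$) is trivial since $U^\theta_{0,M} \equiv 0$: it is continuous, $\sigma_\Omega$-trivial, hence independent of everything, and all copies are equal (hence identically distributed); the pairwise-independence claim in Item~\eqref{properties_approx:item4} for $n = m = 0$ is likewise vacuous.

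For the induction step, fix $n \in \N$ and assume Items~\eqref{properties_approx:item1}--\eqref{properties_approx:item5} hold for all indices $< n$. First I would establish Item~\eqref{properties_approx:item1}: each summand on the right of~\eqref{b37} is a composition of the continuous random field $\funcF(U^{(\theta,l,i)}_{l,M})$ (continuous by the induction hypothesis together with Item~\eqref{a14:item1} of \cref{a14}) evaluated at the continuous argument $(t,x) \mapsto (\uniform^{(\theta,l,i)}_t, x + W^{(\theta,l,i)}_{\uniform^{(\theta,l,i)}_t} - W^{(\theta,l,i)}_t)$, and a finite sum/scaling of continuous random fields is a continuous random field. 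Item~\eqref{properties_approx:item2} then follows by reading off from~\eqref{b37} that $U^\theta_{n,M}$ is a measurable function of the finitely many objects $W^{(\theta,0,-i)}$, $\unif^{(\theta,l,i)}$, $W^{(\theta,l,i)}$, and $U^{(\theta,l,i)}_{l,M}$, $U^{(\theta,-l,i)}_{l-1,M}$; applying the induction hypothesis (Item~\eqref{properties_approx:item2} at level $l$) to the latter, each is $\sigma_\Omega$-measurable with respect to the $\sigma$-algebra generated by the $(\unif,W)$-family indexed by strict extensions of $(\theta,l,i)$ resp.\ $(\theta,-l,i)$, and all of these indices are themselves extensions of $\theta$, so the claimed inclusion holds.

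Item~\eqref{properties_approx:item3} is where the bookkeeping matters: $W^\theta$ and $\unif^\theta$ carry the index $\theta$ itself, whereas by Item~\eqref{properties_approx:item2} the field $U^\theta_{n,M}$ depends only on $(\unif,W)$ with indices that are \emph{strict} extensions $(\theta,l,i,\dots)$ of $\theta$; since the full family $(W^\vartheta)_{\vartheta \in \Theta}$, $(\unif^\vartheta)_{\vartheta \in \Theta}$ consists of independent variables and these two index sets are disjoint, independence follows. For Item~\eqref{properties_approx:item4}, if $(i,j) \ne (k,l)$ then every index appearing in (the measurable representation of) $U^{(\theta,i,j)}_{n,M}$ is a strict extension of $(\theta,i,j)$ and every index in $U^{(\theta,k,l)}_{m,M}$ is a strict extension of $(\theta,k,l)$; because $(\theta,i,j)$ and $(\theta,k,l)$ are not prefixes of one another, these two collections of indices are disjoint, and again disjointness plus the independence of the whole $(\unif,W)$-family gives independence — here one uses the grouping/coalescence lemma for independence of $\sigma$-algebras generated by disjoint index sets. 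Finally, Item~\eqref{properties_approx:item5}: for any $\theta_1, \theta_2 \in \Theta$ the field $U^{\theta_j}_{n,M}$ is obtained by the \emph{same} measurable map applied to the family of inputs indexed by extensions of $\theta_j$; by the induction hypothesis the input fields $U^{(\theta_j,l,i)}_{l,M}$ are identically distributed across $j$ (and jointly so, by a simultaneous induction keeping track of joint laws over finitely many distinct extended indices), the $(\unif,W)$-inputs are i.i.d.\ hence exchangeable under relabelling, so the outputs have the same law; \cref{id_RF} (applied to the continuous random fields $\funcF(U^{(\theta,l,i)}_{l,M})$ and the Brownian evaluation maps) packages the composition step cleanly.

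The main obstacle is Item~\eqref{properties_approx:item5}: ``pairwise identically distributed'' inputs is not enough to conclude identically distributed outputs, since~\eqref{b37} combines \emph{many} of the $U^{(\theta,l,i)}_{l,M}$ at once. The correct inductive statement must therefore be the stronger one that for each fixed $n, M$ and each finite tuple of pairwise-incomparable indices $\vartheta_1, \dots, \vartheta_r \in \Theta$, the joint law of $(U^{\vartheta_1}_{n,M}, \dots, U^{\vartheta_r}_{n,M})$ does not depend on the choice of the $\vartheta_j$'s (as long as they remain pairwise incomparable); this joint statement is what feeds the induction, and it is the part that requires genuine care with the tree structure of $\Theta$ and the independence/exchangeability of the driving noise. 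Everything else is routine measurability and a direct reading of~\eqref{b37}.
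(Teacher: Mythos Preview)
Your proposal is correct and follows essentially the same inductive approach as the paper's (much terser) proof: induction on $n$, continuity via composition and \cref{a14}, the $\sigma$-algebra inclusion read off from~\eqref{b37}, and independence via disjointness of the tree-index sets. The one point of divergence is Item~\eqref{properties_approx:item5}: you propose strengthening the induction hypothesis to joint laws over finite tuples of pairwise-incomparable indices, whereas the paper instead leans on Items~\eqref{properties_approx:item2}--\eqref{properties_approx:item4} (already available at levels $<n$) to see that the various inputs $U^{(\theta,\pm l,i)}_{l,M}$, $W^{(\theta,l,i)}$, $\unif^{(\theta,l,i)}$, $W^{(\theta,0,-i)}$ appearing in~\eqref{b37} are \emph{mutually independent}, so that knowing only their marginal laws (the unstrengthened Item~\eqref{properties_approx:item5} at level $<n$, plus the i.i.d.\ hypotheses on $W$ and $\unif$) already pins down the joint input law, after which \cref{id_RF} handles the composition step. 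Both routes are sound; yours is more self-contained, while the paper's avoids carrying the extra joint-law bookkeeping by recycling the independence structure.
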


\begin{proof}[Proof of Lemma~\ref{properties_approx}]
First, observe that the hypothesis that 
for all $M \in \N$, $\theta\in\Theta $ it holds that
$U^\theta_{0, M} = 0$,
\eqref{b37},
Item~\eqref{a14:item1} in Lemma~\ref{a14},
the fact 
for all $\theta \in \Theta$ it holds that $W^\theta$ and $\uniform^\theta$ are continuous random fields,
the hypothesis that $g$ is continuous, and induction on $\N_0$ establish Item~\eqref{properties_approx:item1}.
Next note that Item~\eqref{a14:item1} in Lemma~\ref{a14}, Beck et al.~\cite[Lemma 2.4]{Becketal2018}, and Item~\eqref{properties_approx:item1} assure that 
for all $n \in \N_0$, $M \in \N$, $\theta\in\Theta $ it holds that 
$F(U^\theta_{n, M})$ is $ ( \mathcal{B}([0, T] \times \R^d ) \otimes \sigma_\Omega(U^\theta_{n, M}) )/ \mathcal{B}(\R )$-measurable.
The hypothesis that 
for all $M \in \N$, $\theta\in\Theta $ it holds that
$U^\theta_{0, M} = 0$,
\eqref{b37}, 
the fact that 
for all $\theta \in \Theta$ it holds that 
$W^\theta$ is $ ( \mathcal{B}([0, T]) \otimes \sigma_\Omega(W^\theta) )/ \mathcal{B}(\R^d )$-measurable,
the fact that 
for all $\theta \in \Theta$ it holds that 
$\uniform^\theta$ is $ ( \mathcal{B}([0, T]) \otimes \sigma_\Omega(\unif^\theta) )/ \mathcal{B}([0, T] )$-measurable,
and induction on $\N_0$ prove Item~\eqref{properties_approx:item2}.
Furthermore, observe that Item~\eqref{properties_approx:item2} and the fact that 
for all $\theta \in \Theta$ it holds that
$(\unif^{(\theta, \vartheta)})_{\vartheta \in \Theta}, (W^{(\theta, \vartheta)})_{\vartheta \in \Theta}$,
$W^\theta$, and $\unif^\theta$ are independent establish Item~\eqref{properties_approx:item3}.
In addition, note that Item~\eqref{properties_approx:item2} and the fact that
for all $i,j,k,l, \in \Z$, $\theta \in \Theta$ with $(i,j) \neq (k,l)$ it holds that 
$((\unif^{(\theta,i,j, \vartheta)})_{\vartheta \in \Theta}, (W^{(\theta,i,j, \vartheta)})_{\vartheta \in \Theta})$
and
$((\unif^{(\theta,k,l, \vartheta)})_{\vartheta \in \Theta}, (W^{(\theta,k,l, \vartheta)})_{\vartheta \in \Theta})$
are independent prove Item~\eqref{properties_approx:item4}.
Finally, observe that the hypothesis that 
for all $M \in \N$, $\theta\in\Theta $ it holds that
$U^\theta_{0, M} = 0$,
the hypothesis that $(W^\theta)_{\theta \in \Theta}$ are i.i.d., 
the hypothesis that $(\uniform^\theta)_{\theta \in \Theta}$ are i.i.d., 
Items~\eqref{properties_approx:item1}--\eqref{properties_approx:item4}, Corollary~\ref{id_RF}, and induction on $\N_0$ establish Item~\eqref{properties_approx:item5}.
The proof of Lemma~\ref{properties_approx} is thus completed.
\end{proof}
\begin{lemma}[Approximations are integrable]\label{b38}
Assume \cref{b35}.
Then 
it holds
for all
$n\in\N_0$, $M\in\N$, $\theta\in\Theta$, $t\in [0,T]$, $s\in[t,T]$, $x\in \R^d$ that
\begin{equation} 
\begin{split} \label{b39}
  \E\!\left[\left| U_{n,M}^{\theta}(s,x+W_{s-t}^\theta)
\right|
  +\int_t^T \left| U_{n,M}^{\theta}(r,x+W_{r-t}^\theta)
\right|+
    \left|(\funcF( U_{n,M}^{\theta}))(r,x+W_{r-t}^\theta)
\right|\,dr
\right]<\infty.
\end{split}     
\end{equation}
\end{lemma}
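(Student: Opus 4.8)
We would prove \cref{b38} by strong induction on $n\in\N_0$. For fixed $M\in\N$, $\theta\in\Theta$, $t\in[0,T]$, $s\in[t,T]$, $x\in\R^d$ the asserted finiteness is equivalent to the conjunction of
(a)~$\Exp{|U_{n,M}^{\theta}(s,x+W^\theta_{s-t})|}<\infty$,
(b)~$\Exp{\int_t^T|U_{n,M}^{\theta}(r,x+W^\theta_{r-t})|\,dr}<\infty$, and
(c)~$\Exp{\int_t^T|(\funcF(U_{n,M}^{\theta}))(r,x+W^\theta_{r-t})|\,dr}<\infty$, all three being well-defined elements of $[0,\infty]$ by \cref{properties_approx}, \cref{a14}, \cref{contRF_eval_nonneg}, and Tonelli's theorem. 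Two reductions will be used repeatedly. First, the Lipschitz estimate \eqref{a01} gives $|(\funcF(v))(r,z)|\le|(\funcF(0))(r,z)|+\LipConst|v(r,z)|$ for all $v\in C([0,T]\times\R^d,\R)$, $r\in[0,T]$, $z\in\R^d$, so (c) follows from (b) and the bound
$\Exp{\int_t^T|(\funcF(0))(r,x+W^\theta_{r-t})|\,dr}=\int_t^T\Exp{|(\funcF(0))(r,x+\fwbm_{r-t})|}\,dr<\infty$,
which holds by Tonelli's theorem, the fact that $W^\theta_{r-t}$ and $\fwbm_{r-t}$ are identically distributed for every $r\in[t,T]$, and the hypothesis \eqref{b36b}. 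Second, since $U_{-1,M}^{\theta}=U_{0,M}^{\theta}=0$, in the base case $n=0$ claims (a) and (b) are trivial and (c) is exactly this bound.

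For the inductive step we fix $n\in\N$ and assume that (a)--(c) hold with $n$ replaced by any $m\in\{0,1,\dots,n-1\}$. We expand $U_{n,M}^{\theta}(s,x+W^\theta_{s-t})$ by means of \eqref{b37} (with $t$, $x$ there replaced by $s$, $x+W^\theta_{s-t}$) and apply the triangle inequality; the resulting summands are of three kinds. Each $g$-summand $M^{-n}|g(x+W^\theta_{s-t}+W^{(\theta,0,-i)}_T-W^{(\theta,0,-i)}_s)|$, $i\in\{1,\dots,M^n\}$, has expectation $M^{-n}\Exp{|g(x+\fwbm_{T-t})|}<\infty$, since by the independence hypotheses in \cref{b35} the vector $W^\theta_{s-t}+(W^{(\theta,0,-i)}_T-W^{(\theta,0,-i)}_s)$ is distributed like $\fwbm_{T-t}$ and \eqref{b36b} applies. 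For each $l\in\{0,\dots,n-1\}$, $i\in\{1,\dots,M^{n-l}\}$, and with $\rho=(\theta,l,i)$, the associated level-$l$ summand is, by \eqref{a01}, bounded pointwise by $(T-s)M^{-(n-l)}$ times
$|(\funcF(0))(\uniform_s^{\rho},\zeta)|+\LipConst|U_{l,M}^{\rho}(\uniform_s^{\rho},\zeta)|+\LipConst\1_{\N}(l)|U_{l-1,M}^{(\theta,-l,i)}(\uniform_s^{\rho},\zeta)|$,
where $\zeta=x+W^\theta_{s-t}+W^{\rho}_{\uniform_s^{\rho}}-W^{\rho}_{s}$. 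Hence it remains to bound $\Exp{|V(\uniform_s^{\rho},\zeta)|}$ for $V$ ranging over $\funcF(0)$, $U_{l,M}^{\rho}$, and $U_{l-1,M}^{(\theta,-l,i)}$.

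This is the crux. By \cref{properties_approx} (Items~\eqref{properties_approx:item1}--\eqref{properties_approx:item3}) each such $V$ is a continuous random field (with $\funcF(0)$ deterministic) that is independent of the triple $(\unif^{\rho},W^{\rho},W^\theta)$---for $V=U_{l-1,M}^{(\theta,-l,i)}$ this uses that $-l\neq l$ whenever $\1_{\N}(l)\neq0$---while $\uniform_s^{\rho}$ is uniformly distributed on $[s,T]$ and, conditionally on $\{\uniform_s^{\rho}=\sigma\}$, the vector $\zeta$ is distributed like $x+\fwbm_{\sigma-t}$. Therefore \cref{contRF_eval_nonneg} and the integral transformation theorem yield, for $s<T$,
\[
  \Exp{\big|V\big(\uniform_s^{\rho},\,x+W^\theta_{s-t}+W^{\rho}_{\uniform_s^{\rho}}-W^{\rho}_{s}\big)\big|}
  =
  \frac{1}{T-s}\int_s^T\Exp{\big|V(\sigma,x+W^{\rho}_{\sigma-t})\big|}\,d\sigma
\]
(when $s=T$ the factor $T-s$ in \eqref{b37} kills all level-$l$ summands). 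If $V=\funcF(0)$ then the inner expectation equals $\Exp{|(\funcF(0))(\sigma,x+\fwbm_{\sigma-t})|}$, which is integrable in $\sigma$ over $[t,T]$ by \eqref{b36b}; if $V=U_{l,M}^{\rho}$ or $V=U_{l-1,M}^{(\theta,-l,i)}$ then, since the inner expectation depends on $V$ only through the law of $W^{\rho}_{\sigma-t}$, \cref{contRF_eval_nonneg} and the induction hypothesis imply that it is finite for each $\sigma\in[t,T]$ and integrable over $\sigma\in[t,T]$. Since the prefactor $T-s$ cancels $\tfrac{1}{T-s}$ and $\sum_{i=1}^{M^{n-l}}M^{-(n-l)}=1$, we obtain a finite bound on $\Exp{|U_{n,M}^{\theta}(s,x+W^\theta_{s-t})|}$ for every fixed $s<T$ (combined with the $g$-estimate this proves (a); the case $s=T$ reduces to $\Exp{|g(x+\fwbm_{T-t})|}<\infty$), and---integrating in $s=r$ over $[t,T]$ and using Tonelli's theorem---a finite bound on $\Exp{\int_t^T|U_{n,M}^{\theta}(r,x+W^\theta_{r-t})|\,dr}$, which proves (b); finally (c) follows from (b) by the first reduction above.

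The principal obstacle is the bookkeeping in this key computation: correctly identifying the joint law of the randomized evaluation point $(\uniform_s^{\rho},\,x+W^\theta_{s-t}+W^{\rho}_{\uniform_s^{\rho}}-W^{\rho}_{s})$---in particular the conditional Gaussianity of its spatial part with covariance proportional to $\uniform_s^{\rho}-t$---and verifying, via \cref{properties_approx} and (where $\funcF$ is applied) \cref{a14}, the independence of the random fields $U_{l,M}^{(\theta,l,i)}$, $U_{l-1,M}^{(\theta,-l,i)}$, $\funcF(U_{l,M}^{(\theta,l,i)})$, $\funcF(U_{l-1,M}^{(\theta,-l,i)})$ from this evaluation point, so that \cref{contRF_eval_nonneg} is applicable. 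The decisive, if elementary, ingredient is that the Monte Carlo weight $(T-s)$ in the recursion \eqref{b37} exactly cancels the density $\tfrac{1}{T-s}$ of the uniform time $\uniform_s$ on $[s,T]$---this is what renders the $dr$-integral in (b) finite.
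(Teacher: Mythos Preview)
Your proof is correct and follows essentially the same route as the paper's: induction on $n$, expansion of $U_{n,M}^\theta$ via \eqref{b37}, and the key distributional identity converting $(T-s)\,\E[\,|V(\uniform_s^\rho,\zeta)|\,]$ into $\int_s^T \E[\,|V(\sigma,\cdot)|\,]\,d\sigma$ so that the induction hypothesis applies. The only cosmetic differences are that the paper keeps $F(U_{l,M})$ intact and appeals directly to part~(c) of the induction hypothesis (rather than first bounding $|F(U_l)|\le|F(0)|+L|U_l|$ and using (a)--(b) as you do), and that it collapses the $i$-sum to a single representative via \cref{id_RF} instead of invoking the hypothesis at each index $(\theta,l,i)$.
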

\begin{proof}[Proof of Lemma~\ref{b38}]
Throughout this proof let $M \in \N$, $\theta \in \Theta$, $x \in \R^d$.
We claim that 
for all $n\in\N_0$, $t\in [0,T]$, $s\in[t,T]$ it holds that
\begin{equation}
\begin{split} \label{b38:eq1}
  \E\!\left[
    \left| U_{n,M}^{\theta}(s,x+W_{s-t}^\theta) \right|
    +
    \int_t^T \left| U_{n,M}^{\theta}(r,x+W_{r-t}^\theta)\right|
    +
    \left| (\funcF( U_{n,M}^{\theta}))(r,x+W_{r-t}^\theta) \right|\,dr
  \right]
<
  \infty.
\end{split}     
\end{equation}
We now prove \eqref{b38:eq1} by induction on $n \in \N_0$.
For the base case $n=0$, note that \cref{b36b} and the fact that $U^\theta_{0,M} = 0$ ensure that 
for all $t\in[0,T]$, $s \in [t, T]$ it holds that
\begin{equation} 
\begin{split}
  &\E\!\left[
    \left| U_{0,M}^{\theta}(s,x+W_{s-t}^\theta) \right|
    +
    \int_t^T \left| U_{0,M}^{\theta}(r,x+W_{r-t}^\theta)\right|
    +
    \left| (\funcF( U_{0,M}^{\theta}))(r,x+W_{r-t}^\theta) \right|\,dr
  \right]\\
&=
  \E\!\left[
    \int_t^T
      \left|  (\funcF(0))(r,x+W^\theta_{r-t})  \right|
    \,dr
  \right]
<
  \infty.
\end{split}
\end{equation}
This establishes \eqref{b38:eq1} in the base case $n=0$. 
For the induction step $\N_0 \ni n-1 \rightarrow n \in \N$ let $n\in \N$ and assume that 
for all $k \in \N_0 \cap [0, n)$, $t\in[0,T]$, $s \in [t, T]$ it holds that 
\begin{equation}
\begin{split} \label{b38:eq2}
  \E\!\left[
    \left| U_{k,M}^{\theta}(s,x+W_{s-t}^\theta) \right|
    +
    \int_t^T \left| U_{k,M}^{\theta}(r,x+W_{r-t}^\theta)\right|
    +
    \left| (\funcF( U_{k,M}^{\theta}))(r,x+W_{r-t}^\theta) \right|\,dr
  \right]
<
  \infty.
\end{split}
\end{equation}
Observe that the triangle inequality and \eqref{b37} ensure that 
for all $t\in[0,T]$, $s \in [t, T]$ it holds that 
\begin{equation}
\label{b38:eq3}
\begin{split}
  &\E\!\left[ \left| 
    U_{n,M}^{\theta}(s,x+W_{s-t}^\theta) 
  \right| \right] 
\leq 
  \frac{1}{M^{n}}\sum_{i=1}^{M^n}
  \E\!\left[\left| 
  	g(x+W^\theta_{s-t}+W^{(\theta,0,-i)}_T-W^{(\theta,0,-i)}_s)\right|
  \right] \\
&\quad
  +
  \sum_{l=0}^{n-1}\tfrac{(T-s)}{M^{n-l}}
  \sum_{i=1}^{M^{n-l}}
    \E\!\left[\left|\Big(\funcF(U_{l,M}^{(\theta,l,i)})-\1_{\N}(l)\funcF( U_{l-1,M}^{(\theta,-l,i)})\Big)
    (\uniform_s^{(\theta,l,i)},x+W^\theta_{s-t}+W_{\uniform_s^{(\theta,l,i)}}^{(\theta,l,i)}-W_s^{(\theta,l,i)})\right|\right].
  \\
\end{split}
\end{equation}
In addition, note that the fact that 
for all $i \in \Z$ it holds that $W^\theta$ and $W^{(\theta,0,i)}$ are independent Brownian motions assures that
for all $t\in[0,T]$, $s \in [t, T]$, $i \in \Z$ it holds that 
\begin{equation}
\label{b38:eq4}
  \E\!\left[\left| 
    g(x+W^\theta_{s-t}+W^{(\theta,0,i)}_T-W^{(\theta,0,i)}_s)
  \right| \right]
=
  \E\!\left[\left| 
    g(x+W^\theta_{(s-t) + (T-s)})
  \right| \right]
=
    \E\!\left[\left| 
    g(x+W^\theta_{T-t})
  \right| \right].
\end{equation}
Moreover, note that 
Lemma~\ref{properties_approx},
the hypothesis that $(W^\theta)_{\theta \in \Theta}$ are i.i.d., 
the hypothesis that $(\uniform^\theta)_{\theta \in \Theta}$ are i.i.d., 
the hypothesis that $(W^\theta)_{\theta \in \Theta}$ and $(\uniform^\theta)_{\theta \in \Theta}$ are independent, 
Lemma~\ref{contRF_eval_nonneg}, and the triangle inequality
assure that
for all $t\in[0,T]$, $s \in [t, T]$ it holds that 
\begin{equation}
\label{b38:eq5}
\begin{split}
  &\sum_{l=0}^{n-1}\tfrac{(T-s)}{M^{n-l}}
  \sum_{i=1}^{M^{n-l}}
    \E\!\left[\left|\Big(\funcF(U_{l,M}^{(\theta,l,i)})-\1_{\N}(l)\funcF( U_{l-1,M}^{(\theta,-l,i)})\Big)
    (\uniform_s^{(\theta,l,i)},x+W^\theta_{s-t}+W_{\uniform_s^{(\theta,l,i)}}^{(\theta,l,i)}-W_s^{(\theta,l,i)})\right|\right] \\
&=
  \sum_{l=0}^{n-1}(T-s)
    \E\!\left[\left|\Big(\funcF(U_{l,M}^{(\theta,l,0)})-\1_{\N}(l)\funcF( U_{l-1,M}^{(\theta,-l,0)})\Big)
    (\uniform_s^{(\theta,l,0)},x+W^\theta_{s-t} +W_{\uniform_s^{(\theta,l,0)}}^{(\theta,l,0)}-W_s^{(\theta,l,0)})\right|\right] \\
&\leq
  2
  \sum_{l=0}^{n-1}
    (T-s)\E\!\left[\left|\big(\funcF(U_{l,M}^{(\theta,l,0)})\big)
    (\uniform_s^{(\theta,l,0)},x+W^\theta_{s-t}+W_{\uniform_s^{(\theta,l,0)}}^{(\theta,l,0)}-W_s^{(\theta,l,0)})\right|\right].
\end{split}
\end{equation}
Furthermore, observe that Lemma~\ref{properties_approx}, 
the fact that 
for all $l \in \Z $ it holds that
$W^\theta$, $W^{(\theta,l,0)}$, $\uniform^{(\theta, l, 0)}$, and $U^{(\theta, l, 0)}$ are independent, and
Lemma~\ref{contRF_eval_nonneg}
demonstrate that
for all $t\in[0,T]$, $s \in [t, T]$, $l \in \N_0 \cap [0, n)$ it holds that 
\begin{equation}
\begin{split}
  &(T-s)
  \E\!\left[\left|\big(\funcF(U_{l,M}^{(\theta,l,0)})\big)
    (\uniform_s^{(\theta,l,0)},x+W^\theta_{s-t}+W_{\uniform_s^{(\theta,l,0)}}^{(\theta,l,0)}-W_s^{(\theta,l,0)})\right|\right] \\
&=
  \int_s^T
    \E\!\left[\big|\big(\funcF(U_{l,M}^{(\theta,l,0)})\big)
      (r,x+W^\theta_{s-t}+W_{r}^{(\theta,l,0)}-W_s^{(\theta,l,0)})\big|\right]
  \, dr \\
&=
  \int_s^T
    \E\!\left[\big|\big(\funcF(U_{l,M}^{(\theta,l,0)})\big)
      (r,x + W_{(s-t) + (r-s)}^{(\theta,l,0)})\big|\right]
  \, dr 
=
  \int_s^T
    \E\!\left[\left|\big(\funcF(U_{l,M}^{\theta})\big)
      (r,x + W_{r-t}^{\theta})\right|\right]
  \, dr.
\end{split}
\end{equation}
Combining this, \eqref{b38:eq3}, \eqref{b38:eq4}, and  \eqref{b38:eq5} with  \eqref{b36b}, \eqref{b38:eq2}, and Tonelli's theorem establishes that
for all $t\in[0,T]$, $s \in [t, T]$ it holds that 
\begin{equation}
\label{b38:eq6}
\begin{split}
  \E\!\left[ \left| 
    U_{n,M}^{\theta}(s,x+W_{s-t}^\theta) 
  \right| \right] &\leq 
  \frac{1}{M^{n}}\sum_{i=1}^{M^n}
  \E\!\left[\left| 
  	g(x+W^\theta_{T-t})\right|
  \right]
  +
  2\sum_{l=0}^{n-1}
    \int_t^T
      \E\!\left[\left|\big(\funcF(U_{l,M}^{\theta})\big)
        (r,x + W_{r-t}^{\theta})\right|\right]
    \, dr \\
&=
  \E\!\left[\left| 
  	g(x+W^\theta_{T-t})\right|
  \right]
  +
  2\sum_{l=0}^{n-1}
    \E\!\left[
      \int_t^T\left|\big(\funcF(U_{l,M}^{\theta})\big)
        (r,x + W_{t-r}^{\theta})\right|
        \, dr 
     \right]    
<
  \infty.
\end{split}
\end{equation}
This, Tonelli's theorem, and \eqref{b38:eq2} imply that 
for all $t\in[0,T]$ it holds that 
\begin{equation}
\label{b38:eq7}
\begin{split}
  &\Exp{ \int_t^T \left| U_{n,M}^{\theta}(s,x+W_{s-t}^\theta)\right| \, ds} 
= 
  \int_t^T \Exp{ \left| U_{n,M}^{\theta}(s,x+W_{s-t}^\theta)\right| } \, ds \\
&\leq
  (T-t) 
  \left[
    \E\!\left[\left| 
  	  g(x+W^\theta_{T-t})\right|
    \right]
    +
    2\sum_{l=0}^{n-1}
      \int_t^T
        \E\!\left[\left|\big(\funcF(U_{l,M}^{\theta})\big)
          (r,x + W_{r-t}^{\theta})\right|\right]
      \, dr 
  \right]
<
  \infty.
\end{split}
\end{equation}
The triangle inequality, Tonelli's theorem, \eqref{a01}, and \cref{b36b} hence prove that
for all $t\in[0,T]$ it holds that
\begin{align}
\begin{aligned}
&  \E\left[\int_t^T\big|(F(U_{n,M}^{\theta}))(s,x+W_{s-t}^\theta)\big|\,ds \right]
=
\int_t^T  \E\Big[\big|(F(U_{n,M}^{\theta}))(s,x+W_{s-t}^\theta)\big|\Big]\,ds
\\
&\leq 
\int_t^T  \E\Big[\left|\big(F(U_{n,M}^{\theta})-F(0)\big)(s,x+W_{s-t}^\theta)\right|\Big]\,ds
+\int_t^T  \E\Big[\big|(F(0))(s,x+W_{s-t}^\theta)\big|\Big]\,ds
\\
&\leq 
\int_t^T  \E\Big[L\big| U_{n,M}^{\theta}(s,x+W_{s-t}^\theta)\big|\Big]\,ds
+\int_t^T  \E\Big[\big|(F(0))(s,x+W_{s-t}^\theta)\big|\Big]\,ds<\infty.
\end{aligned}
\end{align}
Induction, \eqref{b38:eq6}, and \eqref{b38:eq7} hence establish \eqref{b38:eq1}. 
The proof of  \cref{b38} is thus completed.
\end{proof}

\subsection{Upper bound for the exact solution}\label{t3}
In this subsection we establish the upper bound~\eqref{upper_exact:concl1} below for the
exact solution which is well-known in the literature and included here for the
reason of being self-contained.

\begin{lemma}[Upper bound for exact solution]
\label{upper_exact}
Assume \cref{b35}.
Then it holds that
\begin{equation}  
\begin{split}\label{upper_exact:concl1}
  \sup_{t\in[0,T]}\left(\E\!\left[\left|u(t,\start +\fwbm_t)\right|^2\right]\right)^{\!\nicefrac{1}{2}}
  \leq e^{LT} \! \left[
    \left(\E\!\left[\left|g(\xi+\fwbm_T)\right|^2\right]\right)^{\!\nicefrac{1}{2}}
    +T\|F(0)\|_1\right].
\end{split}     
\end{equation}
\end{lemma}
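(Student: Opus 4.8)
The plan is to establish, for the function $\phi\colon[0,T]\to[0,\infty]$ which satisfies for all $t\in[0,T]$ that $\phi(t)=(\Exp{|u(t,\start+\fwbm_t)|^2})^{\nicefrac{1}{2}}$, a backward integral Gronwall estimate
\[
  \phi(t)\le A+\LipConst\int_t^T\phi(s)\,ds
  \qquad\text{with}\qquad
  A:=\big(\Exp{|g(\start+\fwbm_T)|^2}\big)^{\nicefrac{1}{2}}+T\|\funcF(0)\|_1,
\]
and then to conclude via Gronwall's lemma followed by a supremum over $t\in[0,T]$. First I would note that we may assume $\Exp{|g(\start+\fwbm_T)|^2}<\infty$ and $\|\funcF(0)\|_1<\infty$ (hence $A<\infty$), since otherwise the right-hand side of \eqref{upper_exact:concl1} is $\infty$ and the claim is trivial, and that $\int_0^T\phi(s)\,ds<\infty$ by \eqref{b36b}.

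For the core estimate I would fix $t\in[0,T]$ and let $\crbm\colon[0,T]\times\Omega\to\R^d$ be a standard Brownian motion independent of $\fwbm$ (e.g.\ $\crbm=W^0$ in the notation of \cref{b35}). Since $\crbm_t$ and $\fwbm_t$ are identically distributed and $u(t,\cdot)$ is deterministic, $\phi(t)^2=\Exp{|u(t,\start+\crbm_t)|^2}$. Inserting $x=\start+\crbm_t$ into the fixed-point equation \eqref{b36} (which is legitimate because $\crbm$ and $\fwbm$ are independent, so that by Fubini's theorem the expectation in \eqref{b36} evaluated at $x=\start+\crbm_t$ equals the conditional expectation given $\sigma(\crbm_t)$), then invoking the conditional Jensen inequality, and then taking expectations, gives
\[
  \phi(t)^2\le\Exp{\Big|g(\start+\crbm_t+\fwbm_{T-t})+\int_t^T(\funcF(u))(s,\start+\crbm_t+\fwbm_{s-t})\,ds\Big|^2}.
\]
As $\crbm_t$ and $\fwbm_t$ are identically distributed and $(\fwbm_{s-t})_{s\in[t,T]}$ is a standard Brownian motion independent of $\crbm_t$, the $C([t,T],\R^d)$-valued random variables $([t,T]\ni s\mapsto\crbm_t+\fwbm_{s-t})$ and $([t,T]\ni s\mapsto\fwbm_s)$ are identically distributed; combining this with the integrability granted by \eqref{b36b} and the joint measurability of $(s,\omega)\mapsto(\funcF(u))(s,\start+\fwbm_s(\omega))$ (valid since $\funcF(u)\in C([0,T]\times\R^d,\R)$) yields
\[
  \phi(t)^2\le\Exp{\Big|g(\start+\fwbm_T)+\int_t^T(\funcF(u))(s,\start+\fwbm_s)\,ds\Big|^2}.
\]

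From here I would apply the triangle inequality in $L^2(\P)$, decompose $\funcF(u)=(\funcF(u)-\funcF(0))+\funcF(0)$ and apply Minkowski's integral inequality to the time integral, invoke the Lipschitz bound \eqref{a01} in the form $(\Exp{|(\funcF(u)-\funcF(0))(s,\start+\fwbm_s)|^2})^{\nicefrac{1}{2}}\le\LipConst\,\phi(s)$, and estimate $\int_t^T(\Exp{|(\funcF(0))(s,\start+\fwbm_s)|^2})^{\nicefrac{1}{2}}\,ds\le T\|\funcF(0)\|_1$ via the Cauchy--Schwarz inequality and the definition of $\|\cdot\|_1$ in \eqref{a03}. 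This produces $\phi(t)\le A+\LipConst\int_t^T\phi(s)\,ds$ for every $t\in[0,T]$; since $\phi$ is measurable with $\int_0^T\phi(s)\,ds<\infty$, the backward Gronwall inequality then yields $\phi(t)\le e^{\LipConst(T-t)}A\le e^{\LipConst T}A$ for all $t\in[0,T]$, and taking the supremum over $t\in[0,T]$ establishes \eqref{upper_exact:concl1}. I expect the main obstacle to be the bookkeeping in the core estimate: verifying (via \eqref{b36b} and Fubini's theorem) that the substitution $x=\start+\crbm_t$ into \eqref{b36}, the conditional Jensen inequality, the passage via the distributional identity from the frozen-increment process $s\mapsto\crbm_t+\fwbm_{s-t}$ to the single Brownian path $s\mapsto\fwbm_s$, and Minkowski's integral inequality are all applicable; the concluding Gronwall argument is routine.
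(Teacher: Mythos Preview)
Your proposal is correct and follows essentially the same route as the paper's proof: introduce an independent Brownian motion $\crbm$, insert the fixed-point equation \eqref{b36}, apply Jensen's inequality to pass the square through the inner expectation, use that $\crbm_t+\fwbm_{s-t}$ and $\fwbm_s$ have the same law to simplify, then split via the triangle inequality, apply Minkowski's integral inequality and the Lipschitz bound \eqref{a01}, bound the $\funcF(0)$-term by $T\|\funcF(0)\|_1$ via Cauchy--Schwarz, and conclude with Gronwall. The only cosmetic difference is that the paper disintegrates over the law $\mu_t$ of $\start+\crbm_t$ and applies the triangle and Jensen inequalities pointwise in $x$ before recombining (via \cref{contRF_eval_nonneg}), whereas you apply conditional Jensen first and then invoke a single path-level distributional identity; these are equivalent.
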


\begin{proof}[Proof of Lemma~\ref{upper_exact}]
Throughout this proof let 
$\crbm \colon [0, T] \times \Omega \to \R^d$ be a standard Brownian motion with continuous sample paths, 
assume that $\fwbm$ and $\crbm$ are independent, 
let $\mu_{t} \colon \mathcal{B}(\R^d) \to [0,1]$, $t \in [0,T]$, be the probability measures which satisfy 
for all $t \in [0,T]$, $B \in \mathcal{B}(\R^d)$ that
$
  \mu_t(B) 
=
  \P( \xi +  \crbm_t \in B )
  $,
and assume w.lo.g.\ that 
$
  \E\!\left[\left|g(\xi+\fwbm_T)\right|^2\right]
  +
  \|F(0)\|_1
<
  \infty.
$
Observe that the integral transformation theorem, \eqref{b36}, and the triangle inequality assure that
for all $t \in [0,T]$ it holds that 
\begin{equation}
\label{upper_exact:eq1}
\begin{split}
  &\left( \Exp{ | u(t,\start +\fwbm_t)|^2 } \right)^{ \!\nicefrac{1}{2}}
=
  \left( \Exp{ | u(t,\start +\crbm_t)|^2 } \right)^{ \!\nicefrac{1}{2}}
=
  \left( 
    \int_{\R^d}
      | u(t, x)|^2
    \, \mu_t(dx)
  \right)^{ \! \nicefrac{1}{2}} \\
&=
  \left( 
    \int_{\R^d}
      \left| 
        \Exp{
          g(x+\fwbm_{T-t}) 
          +
          \int_t^{T}
            (\funcF(u))(s,x+\fwbm_{s-t})
          \,ds
        }
      \right|^2
    \, \mu_t(dx)
  \right)^{ \! \! \nicefrac{1}{2}} \\
&\leq
  \left( 
    \int_{\R^d}
      \left| 
        \EXPP{
          g(x+\fwbm_{T-t}) 
        }
      \right|^2
    \, \mu_t(dx)
  \right)^{ \! \! \nicefrac{1}{2}}
  +
  \left( 
    \int_{\R^d}
      \left| 
        \Exp{
          \int_{t}^T
            (\funcF(u))(s,x+\fwbm_{s-t})
          \, ds
        }
      \right|^2
    \, \mu_t(dx)
  \right)^{ \! \! \nicefrac{1}{2}}.
\end{split}
\end{equation}
Jensen's inequality hence assures that
for all $t \in [0,T]$ it holds that
\begin{multline}
\label{upper_exact:eq2}
  \left( \Exp{ | u(t,\start +\fwbm_t)|^2 } \right)^{ \! \! \nicefrac{1}{2}} 
\leq
  \left( 
    \int_{\R^d}
        \Exp{
          |  g(x+\fwbm_{T-t})  |^2
        }
    \, \mu_t(dx)
  \right)^{ \! \nicefrac{1}{2}} \\
  +
  \left( 
    \int_{\R^d}
        \Exp{
          \left( \int_{t}^T
            \big|  (\funcF(u))(s,x+\fwbm_{s-t})  \big|
          \, ds \right)^2
        }
    \, \mu_t(dx)
  \right)^{ \!  \!\nicefrac{1}{2}}.
\end{multline}
Furthermore, observe that Lemma~\ref{contRF_eval_nonneg}
and the fact that $\fwbm$ and $\crbm$ are independent Brownian motions demonstrate that 
for all $t \in [0,T]$ it holds that
\begin{equation}
\label{upper_exact:eq3}
\begin{split}
  \left( 
    \int_{\R^d}
        \Exp{
          |  g(x+\fwbm_{T-t})  |^2
        }
    \, \mu_t(dx)
  \right)^{ \! \nicefrac{1}{2}}
&=
 \left( 
        \Exp{
          |  g( \start + \crbm_t+\fwbm_{T-t})  |^2
        }
  \right)^{ \! \nicefrac{1}{2}} =
  \left( 
        \Exp{
          |  g ( \start + \fwbm_{T} )  |^2
        }
  \right)^{ \! \nicefrac{1}{2}}.
\end{split}
\end{equation}
In addition, note that Minkowski's integral inequality, Lemma~\ref{contRF_eval_nonneg} 
and the fact that $\fwbm$ and $\crbm$ are independent Brownian motions imply that
for all $t \in [0,T]$ it holds that
\begin{equation}
\label{upper_exact:eq4}
\begin{split}
  &\left( 
    \int_{\R^d}
        \Exp{
          \left( \int_{t}^T
            \big|  (\funcF(u))(s,x+\fwbm_{s-t})  \big|
          \, ds \right)^2
        }
    \, \mu_t(dx)
  \right)^{ \! \nicefrac{1}{2}} \leq
  \int_{t}^T
     \left( 
      \int_{\R^d}
          \Exp{
              |  (\funcF(u))(s,x+\fwbm_{s-t})  |^2
          }
      \, \mu_t(dx)
    \right)^{ \! \nicefrac{1}{2}} 
  ds \\
&=
  \int_{t}^T
     \left( 
          \Exp{
              |  (\funcF(u))(s, \start + \crbm_t+\fwbm_{s-t})  |^2
          }
    \right)^{ \! \nicefrac{1}{2}} 
  ds 
=
   \int_{t}^T
     \left( 
          \Exp{
              |  (\funcF(u))(s, \start + \fwbm_{s})  |^2
          }
    \right)^{ \! \nicefrac{1}{2}} 
  ds.
\end{split}
\end{equation}
This, the triangle inequality, and \eqref{a01} assure that
for all $t \in [0,T]$ it holds that
\begin{equation}
\label{upper_exact:eq6}
\begin{split}
  &\left( 
    \int_{\R^d}
        \Exp{
          \left( \int_{t}^T
            \big|  (\funcF(u))(s,x+\fwbm_{s-t})  \big|
          \, ds \right)^2
        }
    \, \mu_t(dx)
  \right)^{ \! \nicefrac{1}{2}} \\
&\leq
   \int_{t}^T
     \left( 
          \Exp{
              |  (\funcF(0))(s, \start+\fwbm_{s})  |^2
          }
    \right)^{ \! \nicefrac{1}{2}} 
  \, ds
  + 
  \int_{t}^T
     \left( 
          \Exp{
              | (\funcF(u) - \funcF(0))(s,\start+\fwbm_{s})  |^2
          }
    \right)^{ \! \nicefrac{1}{2}} 
  \, ds \\
&\leq
   \int_{t}^T
     \left( 
          \Exp{
              |  (\funcF(0))(s,\start+\fwbm_{s})  |^2
          }
    \right)^{ \! \nicefrac{1}{2}} 
  \, ds
  + 
  \int_{t}^T
     \left( 
          \Exp{
              L^2  | u(s,\start+\fwbm_{s})  |^2
          }
    \right)^{ \! \nicefrac{1}{2}} 
  \, ds.
\end{split}
\end{equation}
Furthermore, note that Jensen's inequality and \eqref{a03} ensure that
for all $t \in [0, T)$ it holds that
\begin{equation}
\label{upper_exact:eq7}
\begin{split}
  \int_{t}^T 
     \left( 
          \Exp{
              |  (\funcF(0))(s,\start+\fwbm_{s})  |^2
          }
    \right)^{ \! \nicefrac{1}{2}} 
  \, ds 
&=
  (T-t)
  \left(
    \tfrac{1}{(T-t)}
    \int_{t}^T
       \left( 
          \Exp{
              |  (\funcF(0))(s,\start+\fwbm_{s})  |^2
          }
      \right)^{ \! \nicefrac{1}{2}} 
    \, ds
  \right)  \\
&\leq
  (T-t)
  \left(
    \tfrac{1}{(T-t)}
    \int_{t}^T
          \Exp{
              |  (\funcF(0))(s,\start+\fwbm_{s})  |^2
          }
    \, ds
  \right)^{ \! \nicefrac{1}{2}}  \\
&\leq
  \sqrt{T}
  \left(
    \int_{0}^T
          \Exp{
              |  (\funcF(0))(s,\start+\fwbm_{s})  |^2
          }
    \, ds
  \right)^{ \! \nicefrac{1}{2}}
=
  T \| F(0)\|_1.
\end{split}
\end{equation}
Combining this with \eqref{upper_exact:eq2}, \eqref{upper_exact:eq3}, and \eqref{upper_exact:eq6} implies that
for all $t \in [0,T]$ it holds that
\begin{equation}
\label{upper_exact:eq8}
\begin{split}
  &\left( \Exp{ | u(t,\start +\fwbm_t)|^2 } \right)^{ \!\nicefrac{1}{2}} 
\leq
   \left( 
        \Exp{
          |  g ( \start + \fwbm_{T} )  |^2
        }
  \right)^{ \! \nicefrac{1}{2}}
  +
  T \| F(0)\|_1
  +
  L
  \int_{t}^T
     \left( 
          \Exp{
              | u(s,\start+\fwbm_{s})  |^2
          }
    \right)^{ \! \nicefrac{1}{2}} 
  \, ds.
\end{split}
\end{equation}
The hypothesis that 
$
  \int_{0}^T
     \left( 
          \Exp{
              | u(t,\start+\fwbm_{t})  |^2
          }
    \right)^{ \! \nicefrac{1}{2}} 
  \, dt
< 
  \infty
$
and Gronwall's integral inequality hence establish that
for all $t \in [0, T]$ it holds that
\begin{equation}
\begin{split}
  \left(
    \Exp{
      | u(t,\start+\fwbm_{t})  |^2
    }
  \right)^{\nicefrac{1}{2}}
&\leq
  e^{L(T-t)}\left[
    \left(\E\!\left[\left|g(\xi+\fwbm_T)\right|^2\right]\right)^{\!\nicefrac{1}{2}}
    +T\|F(0)\|_1\right] \leq
  e^{LT}\left[
    \left(\E\!\left[\left|g(\xi+\fwbm_T)\right|^2\right]\right)^{\!\nicefrac{1}{2}}
    +T\|F(0)\|_1\right].
\end{split}
\end{equation}
The proof of Lemma~\ref{upper_exact} is thus completed.
\end{proof}

\subsection{Error analysis for multilevel Picard approximations}\label{t4}
In this subsection we provide in Theorem~\ref{b45} below our error analysis for the MLP approximation algorithms introduced in Subsection~\ref{t1}.

\begin{theorem}\label{b45}
Assume \cref{b35} and
let $N,M\in \N$.
Then it holds that
\begin{equation}  
\label{eq:main_estimate}
\begin{split}
  &\left(\E\!\left[\left|U^0_{N,M}(0,\start)-u(0,\start)\right|^2\right]\right)^{\!\!\nicefrac{1}{2}}
\leq
  e^{LT}
  \left[\left(\E\!\left[\left|g(\start + \fwbm_T)\right|^2\right]\right)^{\!\!\nicefrac{1}{2}}+T\left\|\funcF(0)\right\|_1  \right]
  \frac{e^{M/2}(1+2LT)^{N}}{M^{N/2}}
.
\end{split}     
\end{equation}
\end{theorem}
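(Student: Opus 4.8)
The plan is to decompose the error $U^0_{N,M}-u$ along the multilevel structure of \eqref{b37}, to bound each resulting term with the semi-norm estimates of Section~2, to read off a recursion in the Picard index $n$ and the semi-norm index $k$, and finally to solve that recursion in closed form. For the decomposition, using \eqref{b36} together with Fubini's theorem (legitimate by \eqref{b36b}) I would first rewrite the exact fixed point as $u(t,x)=\E[g(x+\fwbm_{T-t})]+(\mathcal I(\funcF(u)))(t,x)$, where $(\mathcal I\Phi)(t,x):=\int_t^T\E[\Phi(s,x+\fwbm_{s-t})]\,ds$ for a random field $\Phi$. Subtracting this from \eqref{b37}, adding and subtracting the unconditional mean of every Monte Carlo average, and telescoping the level increments gives, for every $\theta\in\Theta$ and $n\in\N$,
\[
  U^\theta_{n,M}-u \;=\; E^g_n \;+\; \smallsum_{l=0}^{n-1}E^{\mathrm{stat}}_l \;+\; \mathcal I\big(\funcF(U^\theta_{n-1,M})-\funcF(u)\big),
\]
where $E^g_n=\tfrac1{M^n}\sum_{i=1}^{M^n}g(\cdot+W^{(\theta,0,-i)}_T-W^{(\theta,0,-i)}_\cdot)-\E[g(\cdot+\fwbm_{T-\cdot})]$ is a centred average of i.i.d.\ continuous random fields and $E^{\mathrm{stat}}_l=\tfrac1{M^{n-l}}\sum_{i=1}^{M^{n-l}}(Y^i_l-\E Y^i_l)$ is the centred average of the i.i.d.\ continuous random fields $Y^i_l(t,x)=(T-t)\big(\funcF(U^{(\theta,l,i)}_{l,M})-\1_\N(l)\funcF(U^{(\theta,-l,i)}_{l-1,M})\big)(\uniform^{(\theta,l,i)}_t,x+W^{(\theta,l,i)}_{\uniform^{(\theta,l,i)}_t}-W^{(\theta,l,i)}_t)$. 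The point that makes the telescoping close is that, by \cref{properties_approx}, \cref{contRF_eval_nonneg}, and \cref{id_RF}, the unconditional mean of $Y^i_l$ equals $(\mathcal I(\funcF(U^\theta_{l,M})))(t,x)-\1_\N(l)(\mathcal I(\funcF(U^\theta_{l-1,M})))(t,x)$, whence $\smallsum_{l=0}^{n-1}\E Y^i_l=\mathcal I(\funcF(U^\theta_{n-1,M}))$, while $\E E^g_n=0$.

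Next I would estimate the three pieces. Set $\epsilon^{(k)}_n:=\|U^\theta_{n,M}-u\|_k$, which by \cref{properties_approx}\eqref{properties_approx:item5} does not depend on $\theta$, and assume (without loss of generality, the claim being trivial otherwise) that $\kappa:=(\E[|g(\xi+\fwbm_T)|^2])^{1/2}+T\|\funcF(0)\|_1<\infty$. For $E^g_n$, \cref{a08} (all coefficients equal to $M^{-n}$) and \cref{a36}\eqref{a36:item1} give $\|E^g_n\|_k\le M^{-n/2}(k!)^{-1/2}(\E[|g(\xi+\fwbm_T)|^2])^{1/2}$. For $E^{\mathrm{stat}}_l$, \cref{a08} gives $\|E^{\mathrm{stat}}_l\|_k\le M^{-(n-l)/2}\|Y^1_l\|_k$, \cref{a26} gives $\|Y^1_l\|_k\le T\|\funcF(U^{(\theta,l,1)}_{l,M})-\1_\N(l)\funcF(U^{(\theta,-l,1)}_{l-1,M})\|_{k+1}$, and \cref{a07a}, \cref{a14}\eqref{a14:item2}, \cref{properties_approx}\eqref{properties_approx:item5} bound this last quantity by $L(\epsilon^{(k+1)}_l+\epsilon^{(k+1)}_{l-1})$ when $l\ge1$ and by $\|\funcF(0)\|_{k+1}\le(k!)^{-1/2}\|\funcF(0)\|_1$ when $l=0$ (the latter being the elementary inequality $\|\cdot\|_{j+1}\le j^{-1/2}\|\cdot\|_j$ for $j\ge1$, read off directly from \eqref{a03}). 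For the bias term, realising $\mathcal I\Phi$ as the expectation of a Monte Carlo time integral along an auxiliary independent Brownian motion, \cref{a07} followed by \cref{a26} and \cref{a14}\eqref{a14:item2} yield $\|\mathcal I(\funcF(U^\theta_{n-1,M})-\funcF(u))\|_k\le TL\,\epsilon^{(k+1)}_{n-1}$. Collecting these, merging the $g$-term with the $l=0$ statistical term, and using $1+\sqrt M\le2\sqrt M$, one arrives at
\[
  \epsilon^{(k)}_n \;\le\; \frac{\kappa}{M^{n/2}\sqrt{k!}} \;+\; 2LT\smallsum_{l=0}^{n-1}M^{-(n-l-1)/2}\,\epsilon^{(k+1)}_l \qquad (n\in\N,\ k\in\N_0),
\]
whereas \cref{a36}\eqref{a36:item2} and \cref{upper_exact} furnish the base case $\epsilon^{(k)}_0=\|u\|_k\le e^{LT}\kappa\,(k!)^{-1/2}$.

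It then remains to solve the recursion. Normalising $\tilde\epsilon^{(k)}_n:=M^{n/2}\sqrt{k!}\,\epsilon^{(k)}_n/(e^{LT}\kappa)$ and abbreviating $a:=2LT\sqrt M$, the recursion and base case become $\tilde\epsilon^{(k)}_0\le1$ and $\tilde\epsilon^{(k)}_n\le1+\frac a{\sqrt{k+1}}\sum_{l=0}^{n-1}\tilde\epsilon^{(k+1)}_l$. A short induction on $n$, uniform in $k$ and using the hockey-stick identity $\sum_{l=j}^{n-1}\binom lj=\binom n{j+1}$ together with $\sqrt{(k+1)!}=\sqrt{k+1}\,\sqrt{k!}$, then yields $\tilde\epsilon^{(k)}_n\le\sum_{j=0}^n\binom nj a^j\sqrt{k!/(k+j)!}$. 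Taking $n=N$, $k=0$, noting $a^j=(2LT)^jM^{j/2}$ and $M^j/j!\le e^M$, gives $\tilde\epsilon^{(0)}_N\le e^{M/2}\sum_{j=0}^N\binom Nj(2LT)^j=e^{M/2}(1+2LT)^N$, and hence $\big(\E[|U^0_{N,M}(0,\xi)-u(0,\xi)|^2]\big)^{1/2}=\epsilon^{(0)}_N=e^{LT}\kappa\,M^{-N/2}\tilde\epsilon^{(0)}_N$ is at most $e^{LT}\kappa\,e^{M/2}(1+2LT)^N M^{-N/2}$, which is \eqref{eq:main_estimate}.

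The hard part is the decomposition: one must use the nested-independence bookkeeping of \cref{properties_approx} to verify that the mean of each level-$l$ summand is precisely $\mathcal I$ composed with $\funcF$ applied to the \emph{$\theta$-indexed} approximations $U^\theta_{l,M}$ and $U^\theta_{l-1,M}$ (so that the level means telescope cleanly and produce the single bias term $\mathcal I(\funcF(U^\theta_{n-1,M})-\funcF(u))$), and to check the i.i.d.-and-independent-of-$\fwbm$ hypotheses needed for \cref{a08}, \cref{a26}, and \cref{a07}; the integrability underpinning all the Fubini manipulations is supplied by \cref{b38} and \eqref{b36b}. Once the recursion is established, the remaining computation is elementary.
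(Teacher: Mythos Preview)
Your proposal is correct and, up through the derivation of the recursion
\[
  \epsilon^{(k)}_n \le \frac{\kappa}{M^{n/2}\sqrt{k!}} + 2LT\sum_{l=0}^{n-1}M^{-(n-l-1)/2}\epsilon^{(k+1)}_l,
\]
it is essentially identical to the paper's argument: the same bias/variance split, the same use of \cref{a08}, \cref{a26}, \cref{a36}, \cref{a14}, \cref{a07}, \cref{upper_exact}, and the same integrability and independence bookkeeping via \cref{b38} and \cref{properties_approx}. Your decomposition $U^\theta_{n,M}-u=E^g_n+\sum_l E^{\mathrm{stat}}_l+\mathcal I(\funcF(U^\theta_{n-1,M})-\funcF(u))$ is exactly the paper's split $\|U^0_{n,M}-\E[U^0_{n,M}]\|_k+\|\E[U^0_{n,M}]-u\|_k$ together with the telescoping identity \eqref{c04b}.

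The genuine difference is in how you close the recursion. The paper collapses the two indices by setting $\varepsilon_n=\sup\{M^{-j/2}\|U^0_{n,M}-u\|_k:j+n+k=N\}$, which turns the two-parameter recursion into a one-parameter one, $\varepsilon_n\le a_1+a_2\sum_{l=0}^{n-1}\varepsilon_l$, and then applies a discrete Gronwall inequality; the factor $e^{M/2}M^{-N/2}$ enters at the end via $\sup_{k\le N}(k!\,M^{N-k})^{-1}\le e^M/M^N$. You instead keep both indices, normalise, and solve the recursion in closed form by induction and the hockey-stick identity, obtaining $\tilde\epsilon^{(0)}_N\le\sum_{j=0}^N\binom{N}{j}(2LT)^jM^{j/2}/\sqrt{j!}$; here $e^{M/2}$ enters through $M^{j}/j!\le e^M$, and the binomial theorem produces $(1+2LT)^N$. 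Both routes are clean; the paper's is shorter once one spots the right supremum, while yours is more transparent about how the factorial gain in $k$ and the geometric gain in $M$ combine term by term.
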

\begin{proof}[Proof of Theorem~\ref{b45}]
Throughout this proof assume w.l.o.g.\ that 
$\E\!\left[\left|g(\start + \fwbm_T)\right|^2\right]+\left\|\funcF(0)\right\|_1 < \infty$. 
Note that Item~\eqref{a07a:item1} in Lemma~\ref{a07a} and Lemma~\ref{b38} assure that  
for all $n \in \N$, $k \in \N_0$ it holds that
\begin{equation}
\label{b45:eq01}
  \left\|U_{n,M}^0-u\right\|_{k}
\leq 
  \left\|U_{n,M}^0-\E \!\left[U_{n,M}^0\right]\right\|_{k}+\left\|\E\!\left[U_{n,M}^0\right]-u\right\|_{k}.
\end{equation}
Next observe that Lemma~\ref{contRF_eval_nonneg}, Item~\eqref{a14:item1} in Lemma~\ref{a14}, Lemma~\ref{properties_approx},  Lemma~\ref{b38}, Corollary~\ref{id_RF}, 
and the fact that  
for all $\theta \in \Theta$, $t \in [0, T]$ it holds that
$ \uniform_t^\theta$
is uniformly distributed on $[t, T]$
assure that
for all $t\in[0,T]$, $x\in \R^d$, $n\in\N$, $i,j,k \in \Z$, $\theta \in \Theta$ it holds that
\begin{equation}
\label{b45:eq02}
\begin{split}
  (T-t)
  \Exp{
    \big|(\funcF(U_{n,M}^{(\theta, k, i)}))(\uniform_t^{(\theta, j, i)},x+W^{(\theta, j, i)}_{\uniform_t^{(\theta, j, i)}}-W_t^{(\theta, j, i)}\big|
  } 
&=
  \int_t^T 
    \Exp{
      \big|(\funcF(U_{n,M}^{(\theta, k, i)}))(s,x+W_{s}^{(\theta, j, i)}-W_t^{(\theta, j, i)})\big|
    }
  ds \\
&=
  \int_t^T 
    \Exp{
      \big|(\funcF(U_{n,M}^\theta))(s,x+W_{s-t}^\theta)\big|
    }
  ds 
< 
  \infty.
\end{split}
\end{equation}
Combining this with the fact that 
for all $t \in [0, T]$, $\theta \in \Theta$ it holds that
$
  \Exp{
    \left|g(x+W_T^\theta-W_t^\theta)\right|
  }
=
  \Exp{
    \left|g(x+W_{T-t}^\theta)\right|
  }
< 
  \infty
$ 
and \eqref{b37} ensures that
for all $n \in \N$, $t \in [0, T]$, $x \in \R^d$ it holds that
\begin{equation}
\label{b45:eq03}
\begin{split}
  &\E \!\left[U_{n,M}^0(t, x)\right]
=
  \frac{1}{M^n}
  \left[ \sum_{i=1}^{M^n} 
    \Exp{ g\big(x+W^{(0,0,-i)}_T-W^{(0,0,-i)}_t\big)} 
  \right]\\
&\quad
  +
  \sum_{l=0}^{n-1} \frac{(T-t)}{M^{n-l}}
    \left[\sum_{i=1}^{M^{n-l}}
      \Exp{
        \left(\funcF\big(U_{l,M}^{(0,l,i)}\big)-\1_{\N}(l)\funcF\big( U_{l-1,M}^{(0,-l,i)}\big)\right)
        \left(\uniform_t^{(0,l,i)},x+W_{\uniform_t^{(0,l,i)}}^{(0,l,i)}-W_t^{(0,l,i)}\right)
      }
    \right].
\end{split}
\end{equation}
This and \eqref{b37} imply that 
for all $n \in \N$, $k \in \N_0$ it holds that
\begin{equation}
\label{b45:eq04}
\begin{split}
  &\left\|U_{n,M}^0-\E \!\left[U_{n,M}^0\right]\right\|_{k} \\
&=
  \left\| 
    [0,T]\times \R^d\times \Omega\ni (t,x,\omega) \mapsto 
    U_{n,M}^0 (t, x, \omega) - \E \!\left[U_{n,M}^0(t, x)\right]
  \right\|_k \\
&\leq
  \Bigg\| 
    [0,T]\times \R^d\times \Omega\ni (t,x,\omega) \\
     &\quad\quad \mapsto   
     \Bigg[ \sum_{i=1}^{M^n} 
       \tfrac{1}{M^n} 
       \left(
         \left[ g\big(x+W^{(0,0,-i)}_T-W^{(0,0,-i)}_t\big) \right] \!(\omega)
         -
         \Exp{ g\big(x+W^{(0,0,-i)}_T-W^{(0,0,-i)}_t\big)} 
       \right)
     \Bigg] \in \R
  \Bigg\|_k\\
&\quad+
  \sum_{l=0}^{n-1}
    \Bigg\|
      [0,T]\times \R^d\times \Omega\ni (t,x,\omega)\\
&\quad\quad\quad\quad
      \mapsto 
      \Bigg[ \sum_{i=1}^{M^{n-l}}  
        \tfrac{T-t}{M^{n-l}}
        \Bigg(
          \left[
            \big(\funcF(U_{l,M}^{(0,l,i)})-\1_{\N}(l)\funcF( U_{l-1,M}^{(0,-l,i)})\big)
            \big(\uniform_t^{(0,l,i)},x+W_{\uniform_t^{(0,l,i)}}^{(0,l,i)}-W_t^{(0,l,i)}\big)
          \right] \! (\omega)\\
&\qquad\qquad\quad\quad\quad
          - 
          \EXPPP{
            \big(\funcF(U_{l,M}^{(0,l,i)})-\1_{\N}(l)\funcF( U_{l-1,M}^{(0,-l,i)})\big)
            \big(\uniform_t^{(0,l,i)},x+W_{\uniform_t^{(0,l,i)}}^{(0,l,i)}-W_t^{(0,l,i)}\big)
          }
        \Bigg)
      \Bigg] \in\R
    \Bigg\|_k.
\end{split}
\end{equation}
Moreover, note that Lemma~\ref{properties_approx},
the hypothesis that $(W^\theta)_{\theta \in \Theta}$ are i.i.d., 
the hypothesis that $(\uniform^\theta)_{\theta \in \Theta}$ are i.i.d., Item~\eqref{a07a:item1} in Lemma~\ref{a07a}, and Corollary~\ref{id_RF}
ensure that 
for all $l \in \N_0$ it holds that 
\begin{equation}
\label{b45:eq041}
\begin{split}
 &\Big(
   [0,T]\times \R^d\times \Omega\ni (t,x,\omega)
   \mapsto 
          \left[
            \big(\funcF(U_{l,M}^{(0,l,i)})-\1_{\N}(l)\funcF( U_{l-1,M}^{(0,-l,i)})\big)
            \big(\uniform_t^{(0,l,i)},x+W_{\uniform_t^{(0,l,i)}}^{(0,l,i)}-W_t^{(0,l,i)}\big)
          \right] \! (\omega)
    \in \R
  \Big)_{i \in \Z}
\end{split}
\end{equation}
are continuous i.i.d.\ random fields.
Lemma~\ref{properties_approx}, the hypothesis that $(W^\theta)_{\theta \in \Theta}$ are i.i.d., \eqref{b45:eq04},
and Lemma~\ref{a08} therefore show that
for all $n \in \N$, $k \in \N_0$ it holds that
\begin{equation}
\label{b45:eq05}
\begin{split}
  &\left\|U_{n,M}^0-\E \!\left[U_{n,M}^0\right]\right\|_{k} \\
&\leq
   \left[  \textstyle \sum\limits_{i=1}^{M^{n}} \left| \tfrac{1}{M^n} \right|^2 \right]^{\nicefrac{1}{2}}
   \left\| [0,T]\times \R^d\times \Omega\ni (t,x,\omega)\mapsto \left[ g(x+W_T^{(0, 0, -1)}-W_t^{(0, 0, -1)}) \right] \! (\omega)\in\R\right\|_{k}
   \\
   &\quad+\sum_{l=0}^{n-1}
      \left[ \textstyle \sum_{i=1}^{M^{n-l}} \left| \tfrac{1}{M^{n-l}} \right|^2 \right]^{\nicefrac{1}{2}}
   \bigg\|[0,T]\times \R^d\times \Omega\ni (t,x,\omega) \\
&\qquad   \mapsto 
(T-t)    \left[ \big(\funcF(U_{l,M}^{(0,l,1)})-\1_{\N}(l)\funcF( U_{l-1,M}^{(0,-l,1)})\big) 
  (\uniform_t^{(0,l,1)} ,x+W_{\uniform_t^{(0,l,1)}  }^{(0,l,1)}-W_t^{(0,l,1)}) \right] \! (\omega)\in\R
\bigg\|_{k} \\
&=
   \tfrac{1}{\sqrt{M^{n}}}
   \left\| [0,T]\times \R^d\times \Omega\ni (t,x,\omega)\mapsto \left[ g(x+W_T^{(0, 0, -1)}-W_t^{(0, 0, -1)}) \right] \! (\omega)\in\R\right\|_{k} \\
&\quad
   +\sum\limits_{l=0}^{n-1} \tfrac{1}{\sqrt{M^{(n-l)}}}
   \bigg\|[0,T]\times \R^d\times \Omega\ni (t,x,\omega) \\
&\qquad 
   \mapsto 
(T-t)     \left[ \big(\funcF(U_{l,M}^{(0,l,1)})-\1_{\N}(l)\funcF( U_{l-1,M}^{(0,-l,1)})\big) 
  (\uniform_t^{(0,l,1)} ,x+W_{\uniform_t^{(0,l,1)}  }^{(0,l,1)}-W_t^{(0,l,1)}) \right] \! (\omega)\in\R
\bigg\|_{k}.
\end{split}
\end{equation}
Moreover, observe that Item~\eqref{a36:item1} in \cref{a36} and the hypothesis that $(W^\theta)_{\theta \in \Theta}$ and $\fwbm$ are independent assure that
for all $k \in \N_0$ it holds that
\begin{equation}
\begin{split}
\label{b45:eq06}
  &\left\| [0,T]\times \R^d\times \Omega\ni (t,x,\omega)\mapsto \left[g(x+W_T^{(0, 0, -1)}-W_t^{(0, 0, -1)}) \right] \! (\omega)\in\R\right\|_{k} 
=
  \tfrac{1}{ \sqrt{k!}} \! \left( \E\!\left[\left|g(\fwBM _T)\right|^2\right] \right)^{\! \nicefrac{1}{2}}.
\end{split}
\end{equation}
Furthermore, note that
the hypothesis that $(W^\theta)_{\theta \in \Theta}$ are i.i.d., 
the hypothesis that $(\uniform^\theta)_{\theta \in \Theta}$ are i.i.d., 
the hypothesis that $(W^\theta)_{\theta \in \Theta}$, $(\uniform^\theta)_{\theta \in \Theta}$, and $\fwbm$ are independent, 
Lemma~\ref{properties_approx},
and \cref{a26}  imply that
for all $n \in \N$, $k \in \N_0$ it holds that
\begin{equation}
\begin{split}
  &\sum_{l=0}^{n-1} 
    \tfrac{1}{\sqrt{M^{(n-l)}}}
    \bigg\|
      [0,T]\times \R^d\times \Omega\ni (t,x,\omega) \\
&\quad 
      \mapsto 
      (T-t) 
      \left[
        \big(\funcF(U_{l,M}^{(0,l,1)})-\1_{\N}(l)\funcF( U_{l-1,M}^{(0,-l,1)})\big) 
        (\uniform_t^{(0,l,1)} ,x+W_{\uniform_t^{(0,l,1)}  }^{(0,l,1)}-W_t^{(0,l,1)})
      \right] \! (\omega)\in\R
    \bigg\|_{k} \\
&\leq
  \sum_{l=0}^{n-1} 
    \tfrac{T}{\sqrt{M^{(n-l)}}} 
    \left\|
       \funcF(U_{l,M}^{(0,l,1)})-\1_{\N}(l)\funcF( U_{l-1,M}^{(0,-l,1)})  
    \right\|_{k+1}
\end{split}
\end{equation}
Item~\eqref{a07a:item1} in Lemma~\ref{a07a}, 
the hypothesis that $U^0_{0, M} = 0$,
and Lemma~\ref{a14} 
therefore demonstrate that
for all $n \in \N$, $k \in \N_0$ it holds that
\begin{equation}
\label{b45:eq07}
\begin{split}
  &\sum_{l=0}^{n-1} 
    \tfrac{1}{\sqrt{M^{(n-l)}}}
    \bigg\|
      [0,T]\times \R^d\times \Omega\ni (t,x,\omega) \\
&\quad 
      \mapsto 
      (T-t) 
      \left[
        \big(\funcF(U_{l,M}^{(0,l,1)})-\1_{\N}(l)\funcF( U_{l-1,M}^{(0,-l,1)})\big) 
        (\uniform_t^{(0,l,1)} ,x+W_{\uniform_t^{(0,l,1)}  }^{(0,l,1)}-W_t^{(0,l,1)})
      \right] \! (\omega)\in\R
    \bigg\|_{k} \\
&\leq
  \tfrac{T}{\sqrt{M^{n}}} 
  \left\|
       \funcF(U_{0,M}^{0})  
  \right\|_{k+1}
  +
  \sum_{l=1}^{n-1} 
    \tfrac{T}{\sqrt{M^{(n-l)}}} 
    \left(
      \left\|
         \funcF(U_{l,M}^{0})- F(u)
      \right\|_{k+1}
      +
      \left\|
        F(u)-\funcF( U_{l-1,M}^{0}) 
      \right\|_{k+1}
    \right) \\
&\leq
  \tfrac{T}{\sqrt{M^{n}}} 
  \left\|
       \funcF(0)  
  \right\|_{k+1}
  +
  \left[\sum_{l=1}^{n-1} 
    \tfrac{TL}{\sqrt{M^{(n-l )}}} 
      \left\|
         U_{l,M}^{0}- u
      \right\|_{k+1}
  \right]
  +
  \left[\sum_{l=1}^{n-1} 
      \tfrac{TL}{\sqrt{M^{(n-l)}}} 
      \left\|
        U_{l-1,M}^{0}  - u
      \right\|_{k+1} 
  \right]\\
&\leq
  \tfrac{T}{\sqrt{M^{n}}} 
  \left\|
       \funcF(0)  
  \right\|_{k+1}
  +
  \sum_{l=0}^{n-1} 
    \tfrac{(2-\1_{\{n-1\}}(l))LT}{\sqrt{M^{(n - l - 1)}}} 
      \left\|
         U_{l,M}^{0}- u
      \right\|_{k+1}.
\end{split}
\end{equation}
In addition, observe that \cref{a03} ensures that
for all $k\in \N_0$ it holds that
\begin{equation}
\|\funcF(0)\|_{k+1}^2
=\tfrac{1}{T^{k+1}}\int_0^T \tfrac{t^{k}}{k!}
\E\!\left[\left|\funcF(0)(t,\fwBM_t)\right|^2\right]\,dt
\leq \tfrac{T^k}{T^{k+1}k!}\int_0^T \E\!\left[\left|\funcF(0)(t,\fwBM_t)\right|^2\right]\,dt
=\tfrac{1}{k!}\|F(0)\|_{1}^2.
\end{equation}
Combining this \eqref{b45:eq05}, \eqref{b45:eq06},  and \eqref{b45:eq07} establishes that
for all $n \in \N$, $k\in\N_0$ it holds that
\begin{equation}  
\begin{split}
\label{b45:eq08}
   &\left\|U_{n,M}^0-\E[U_{n,M}^0]\right\|_{k} \\
&\leq 
  \tfrac{1}{\sqrt{k!M^{n}}}
  \left( \E\!\left[\left|g(\fwBM _T)\right|^2\right] \right)^{\! \nicefrac{1}{2}}
   +\tfrac{T}{\sqrt{k! M^{n}}}\|F(0)\|_{1}
   +\sum_{l=0}^{n-1}
       \tfrac{(2-\1_{\{n-1\}}(l))\LipConst T}{\sqrt{M^{(n-l-1)}}}
   \left\|
     U_{l,M}^{0}- u
   \right\|_{k+1} \\
&=
  \tfrac{1}{\sqrt{k!M^{n}}}
  \left[
    \left( \E\!\left[\left|g(\fwBM _T)\right|^2\right] \right)^{\! \nicefrac{1}{2}}
     + 
     T \|F(0)\|_{1}
  \right]
   +\sum_{l=0}^{n-1}
       \tfrac{(2-\1_{\{n-1\}}(l))\LipConst T}{\sqrt{M^{(n-l-1)}}}
   \left\|
     U_{l,M}^{0}- u
   \right\|_{k+1}   .
\end{split}     
\end{equation}
Next observe that, \eqref{b45:eq02}, \eqref{b45:eq03}, and \eqref{b45:eq041} demonstrate that
for all $n \in \N$, $t \in [0, T]$, $x \in \R^d$ it holds that
\begin{equation}
\label{b45:eq09}
\begin{split}
  &\E \!\left[U_{n,M}^0(t, x)\right] \\
&=
  \frac{1}{M^n}
  \left[ \sum_{i=1}^{M^n} 
    \Exp{ g\big(x+W^{0}_T-W^{0}_t\big)} 
  \right]
  +
  \sum_{l=0}^{n-1} \frac{(T-t)}{M^{n-l}}
    \left[\sum_{i=1}^{M^{n-l}}
      \E\!\left[ \left(\funcF( U_{l,M}^{0})-\1_{\N}(l)\funcF( U_{l-1,M}^{0})\right)\!(\uniform^0_t,x+W_{\uniform^0_t}^0-W_{t}^{0})\right]
    \right] \\
&=
  \E\!\left[g(x+W_{T}^{0}-W^{0}_{t})\right]+
  (T-t) 
  \Bigg(
    \sum_{l=0}^{n-1}  
    \E\!\left[ \funcF( U_{l,M}^{0})(\uniform^0_t,x+W_{\uniform^0_t}^0-W_{t}^{0})\right] 
    -
    \1_{\N}(l)\, \E\!\left[ \funcF( U_{l-1,M}^{0})(\uniform^0_t,x+W_{\uniform^0_t}^0-W_{t}^{0})\right]
  \Bigg)\\
&=
  \E\!\left[g(x+W^0_{T-t})\right]+
(T-t)   \E\!\left[ \left(\funcF( U_{n-1,M}^{0})\right)\!(\uniform^0_t,x+W^0_{\uniform^0_t}-W^0_{t})\right].
\end{split}
\end{equation}
In addition, note that \cref{b36b}, \cref{b36}, Fubini's theorem, and Lemma~\ref{contRF_eval_integrable} assure that
for all $t\in[0,T]$, $x\in \R^d$ it holds that
\begin{equation}
\begin{split}
  u(t,x)
&= 
  \Exp{
    g(x+\fwbm_{T-t}) 
  }
  +
  \int_t^{T}
    \Exp{
      (\funcF(u))(s,x+\fwbm_{s}-\fwbm_{t})
    }
  \,ds \\
&= 
  \Exp{
    g(x+\fwbm_{T-t}) 
  }
  +
  (T-t)
  \Exp{
      (\funcF(u))(\uniform^0_t,x+\fwbm_{\uniform^0_t}-\fwbm_{t})
  }\\
&= 
  \Exp{
    g(x+W^0_{T-t}) 
  }
  +
  (T-t)
  \Exp{
      (\funcF(u))(\uniform^0_t,x+W^0_{\uniform^0_t}-W^0_{t})
  }.
\end{split}
\end{equation}
Combining this with \cref{b45:eq09} yields that 
for all $n\in\N$, $t\in[0,T]$, $x\in \R^d$ it holds that
\begin{equation}
\begin{split}\label{c04b}
  &\E\!\left[U_{n,M}^{0}(t,x) \right] - u(t,x)  =
  (T-t)
  \left(
    \E\!\left[ \left(\funcF( U_{n-1,M}^{0})\right)\!(\uniform^0_t,x+W^0_{\uniform^0_t}-W^0_{t})\right]
    -
    \Exp{
      (\funcF(u))(\uniform^0_t,x+W^0_{\uniform^0_t}-W^0_{t})
  }
  \right) \\
&=  
  \E\!\left[(T-t) \left(\funcF( U_{n-1,M}^{0})-\funcF( u)\right)\!(\uniform^0_t,x+W^0_{\uniform^0_t}-W^0_{t}) \right].
\end{split}
\end{equation}
\cref{a07},
\cref{a26}, 
\cref{a14}, and
\cref{properties_approx} hence show that
for all $n\in \N$, $k \in \N_0$ it holds that
\begin{align}
\begin{aligned}
\left\|\E \!\left[U_{n,M}^0\right]-u\right\|_{k}
&=
  \Big\|
    [0,T]\times\R^d\times\Omega\ni (t,x,\omega)
    \mapsto \Exp{(T-t) \left(\funcF( U_{n-1,M}^{0})-\funcF( u)\right)\!(\uniform^0_t,x+W^0_{\uniform^0_t}-W^0_{t})} \in\R
  \Big\|_{k} \\
&\leq 
\Big\|
[0,T]\times\R^d\times\Omega\ni (t,x,\omega)
\mapsto (T-t) \left[\left(\funcF( U_{n-1,M}^{0})-\funcF( u)\right)\!(\uniform^0_t,x+W^0_{\uniform^0_t}-W^0_{t})\right](\omega) \in\R
\Big\|_{k}
\\&\leq T
\left\|\funcF( U_{n-1,M}^{0})-\funcF( u)\right\|_{k+1}
\leq \LipConst T
\left\|U_{n-1,M}^{0}- u\right\|_{k+1}.
\end{aligned}\label{f01}
\end{align}
This, \eqref{b45:eq01}, and \eqref{b45:eq08} demonstrate that 
for all $n \in \N$, $k\in \N_0$ it holds that
\begin{equation}
\label{b45:eq10}
\begin{split}
  \left\|U_{n,M}^0-u\right\|_{k}
&\leq 
  \tfrac{1}{\sqrt{k!M^{n}}}
  \left[
    \left( \E\!\left[\left|g(\fwBM _T)\right|^2\right] \right)^{\! \nicefrac{1}{2}}
     + 
     T \|F(0)\|_{1}
  \right] \\
&\qquad
   +
   \left[\sum_{l=0}^{n-1}
       \tfrac{(2-\1_{\{n-1\}}(l))\LipConst T}{\sqrt{M^{(n-l-1)}}}
       \left\|
         U_{l,M}^{0}- u
       \right\|_{k+1}
   \right]
   +
   \LipConst T
   \left\|U_{n-1,M}^{0}- u\right\|_{k+1}
\\
   &\leq \tfrac{1}{\sqrt{k!M^{n}}}\left[
\left(\E\!\left[\left|g(\start +\fwbm_T)\right|^2\right]\right)^{ \! \nicefrac{1}{2}}
   +T\left\|F(0)\right\|_1\right]
   +\sum_{l=0}^{n-1}
       \tfrac{2\LipConst T}{\sqrt{M^{(n-l-1)}}}
   \left\|U_{l,M}^{0}- u   \right\|_{k+1}.
\end{split}
\end{equation}
For the next step
let $\varepsilon_n\in [0,\infty]$, $n\in[0,N]\cap \N_0$, satisfy 
for all $n\in [0,N]\cap\N_0$ that
\begin{align}\label{f04}
\varepsilon_n =\sup\left\{ \tfrac{1}{\sqrt{M^j}} \left\| U_{n,M}^{0} - u  \right\|_{  k } \colon j,k\in\N_0,    j + n + k = N  \right\}
\end{align}
and
let $a_1,a_2 \in [0,\infty)$ be given by
\begin{align}\label{f03}
a_1=\sup_{k\in \{0,\ldots,N\}} \tfrac{1}{\sqrt{k!M^{N-k}}}\left[
\left(\E\!\left[\left|g(\start +\fwbm_T)\right|^2\right]\right)^{ \! \nicefrac{1}{2}}
   +T\left\|F(0)\right\|_1\right]
\qquad \text{and} \qquad 
a_2= 2LT.
\end{align} 
Observe that \eqref{b45:eq10} implies that
for all $n\in [1,N] \cap \N$, $j,k\in \N_0$ with  $j + n + k = N$ it holds that
\begin{equation}
\label{f02}
\begin{split}
&\tfrac{1}{\sqrt{M^j}}\left\|U_{n,M}^0-u\right\|_{k}  \leq \tfrac{1}{\sqrt{k!M^{n+j}}}\left[
\left(\E\!\left[\left|g(\start +\fwbm_T)\right|^2\right]\right)^{ \! \nicefrac{1}{2}}
   +T\left\|F(0)\right\|_1\right]
   +\sum_{l=0}^{n-1}
       \tfrac{2\LipConst T}{\sqrt{M^{(n+j-l-1)}}}
   \left\|U_{l,M}^{0}- u   \right\|_{k+1} \\
&= \tfrac{1}{\sqrt{k!M^{N-k}}}\left[
\left(\E\!\left[\left|g(\start +\fwbm_T)\right|^2\right]\right)^{ \! \nicefrac{1}{2}}
   +T\left\|F(0)\right\|_1\right]
   +\sum_{l=0}^{n-1}
       \tfrac{2\LipConst T}{\sqrt{M^{(N-k-l-1)}}}
   \left\|U_{l,M}^{0}- u   \right\|_{k+1} \leq
 a_1+a_2\sum_{l=0}^{n-1} \varepsilon_l.
\end{split}
\end{equation}
Hence, we obtain 
for all $n\in [1,N] \cap \N$ that
$
\varepsilon_n\leq a_1+a_2\sum_{l=0}^{n-1} \varepsilon_l=(a_1+a_2 \varepsilon_0)+a_2\sum_{l=1}^{n-1}\varepsilon_l
$.
The discrete Gronwall-type inequality in \cite[Corollary 4.1.2]{agarwal2000difference} hence proves that 
for all $n\in [1,N] \cap \N$ it holds that
$
\varepsilon_n \leq (a_1+a_2 \varepsilon_0)(1+a_2)^{n-1}
$.
This, \eqref{a03}, and \eqref{f04} imply that
\begin{equation}\label{f05c}
\begin{split}
\left(\E\!\left[\left|U^0_{N,M}(0,\start)-u(0,\start)\right|^2\right]\right)^{\!\!\nicefrac{1}{2}}
&= \left\|U^0_{N,M}-u\right\|_{0} 
=\varepsilon_N \leq (a_1+a_2 \varepsilon_0)(1+a_2)^{N-1}
\leq \max\{a_1,\varepsilon_0\}(1+a_2)^{N}.
\end{split}
\end{equation}
Moreover, observe that
\begin{equation}\label{b48}
    \sup_{k\in \{0, \ldots, N \} }\tfrac{1}{M^{(N-k)}k!}=\frac{1}{M^{N}}
     \sup_{k\in\{0, \ldots, N \}}\tfrac{M^{k}}{k!}
    \leq \frac{1}{M^{N}}\sum_{k=0}^\infty\tfrac{M^{k}}{k!}=\frac{e^{M}}{M^{N}}.
    \end{equation}
Therefore, we obtain that
\begin{align}\label{f06}
\begin{aligned}
a_1
&\leq  
\left[
\left(\E\!\left[\left|g(\start +\fwbm_T)\right|^2\right]\right)^{\!\!\nicefrac{1}{2}}+T\left\|F(0)\right\|_1  \right]
\frac{e^{\nicefrac{M}{2}}}{M^{\nicefrac{N}{2}}}.
\end{aligned}
\end{align}
In addition, note that the hypothesis that $U^0_{0,M} = 0$, Item~\eqref{a36:item2} in Lemma~\ref{a36}, \eqref{b48}, and Lemma~\ref{upper_exact}
ensure that
\begin{equation}  \begin{split}\label{c02}
  \varepsilon_0=\sup_{k\in \{0,\ldots,N\}}\frac{\|u\|_{k}}{\sqrt{M^{(N-k)}}}
&\leq 
\left[\sup_{t\in [0,T]}\left(\E\!\left[\left|u(t,\start + \fwbm_t)\right|^2\right]\right)^{\!\nicefrac{1}{2}}\right]
\left[\sup_{k\in \{0,\ldots,N\}}\tfrac{1}{\sqrt{M^{(N-k)}k!}}\right]\\
&\leq
e^{LT}\left[
    \left(\E\!\left[\left|g(\xi+\fwbm_T)\right|^2\right]\right)^{\!\nicefrac{1}{2}}
    +T\|F(0)\|_1\right]
\frac{e^{\nicefrac{M}{2}}}{M^{\nicefrac{N}{2}}}  .
\end{split}     \end{equation}
This and \cref{f06} assure that 
\begin{align}
\max\{a_1,\varepsilon_0\}
\leq
e^{LT}\left[
    \left(\E\!\left[\left|g(\xi+\fwbm_T)\right|^2\right]\right)^{\!\nicefrac{1}{2}}
    +T\|F(0)\|_1\right]
\frac{e^{\nicefrac{M}{2}}}{M^{\nicefrac{N}{2}}}  .
\end{align}
Combining this with
\cref{f03,f05c} establishes that
\begin{equation} 
\begin{split}
&\left(\E\!\left[\left|U^0_{N,M}(0,\start)-u(0,\start)\right|^2\right]\right)^{\!\nicefrac{1}{2}}
\leq
e^{LT}
\left[\left(\E\!\left[\left|g(\start +\fwbm_T)\right|^2\right]\right)^{ \! \nicefrac{1}{2}}+T\left\|\funcF(0)\right\|_1  \right]\frac{e^{\nicefrac{M}{2}}(1+2LT)^{N}}{M^{\nicefrac{N}{2}}} 
.
\end{split}
\end{equation}
The proof of \cref{b45} is thus completed.
\end{proof}

\subsection{Analysis of the computational effort}\label{t5}
In this subsection we combine the error analysis provided in Subsection~\ref{t4} for the in Subsection~\ref{t1} introduced MLP approximation algorithms with a computational cost analysis to establish in Theorem~\ref{thm1} the overall complexity analysis for the proposed MLP approximation algorithms. 

In \cref{c15} below, for every $n,M\in \N$ we think of $\RN_{n,M}$ as an upper bound for the sum of the number of realizations of scalar standard normal random variables which are required to compute one realization of $U^0_{n,M}(0,0)$ in \eqref{thm1:ass2} below
and the number of realizations of on $[0,1]$ uniformly distributed random variables which are required to compute one realization of $U^0_{n,M}(0,0)$ in \eqref{thm1:ass2} below.
Roughly speaking, for every $n,M\in \N$ one realization of $U^0_{n,M}(0,0)$ employs $dM^n$ realizations of scalar standard normal random variables to calculate the second of the two summands on the right-hand side of \eqref{thm1:ass2} (the Monte Carlo sum involving the terminal condition $g\colon \R^d \to \R$). 
 Additionally, roughly speaking, for every $n,M\in \N$, $l\in \{0,1,\ldots, n-1\}$ one realization of $U^0_{n,M}(0,0)$ employs $dM^{n-l}$ scalar standard normal random variables and $M^{n-l}$ on $[0,1]$ uniformly distributed random variables to evaluate the $(l+1)$-th summand within the first of the two summands on the right-hand side of \eqref{thm1:ass2} (the Monte Carlo sum involving the difference of the nonlinearity $f\colon [0,T]\times \R^d\times \R \to \R$).
Moreover, roughly speaking, for every $n,M\in \N$, $l\in \{0,1,\ldots, n-1\}$ one realization of $U^0_{n,M}(0,0)$ employs $M^{n-l}$ realizations of $U^\theta_{l,M}(t,x)$ and $\1_{ \N }( l )M^{n-l}$ realizations of $U^\theta_{l-1,M}(t,x)$ for some suitable $\theta\in \Theta$, $t\in [0,T]$, and $x\in \R^d$. Note that for every $n,M\in \N$, $\theta\in \Theta$, $t\in [0,T)$, $x\in \R^d$ it holds that the number of realizations of scalar random variables required to compute one realization of $U^\theta_{n,M}(t,x)$ is equal to the number of realizations of scalar random variables required to compute one realization of $U^0_{n,M}(0,0)$.

\begin{lemma}[Computational effort]\label{c15}
Let $d \in \N$ and
$(\RN_{n,M})_{n,M\in \Z}\subseteq\N_0$ satisfy
for all $n,M \in \N$ that 
$\RN_{0,M}=0$
and 
\begin{align}
\label{c16}
  \RN_{ n,M}
  &\leq d M^n+\sum_{l=0}^{n-1}\left[M^{(n-l)}( d+1 + \RN_{ l, M}+ \1_{ \N }( l )  \RN_{ l-1, M })\right].
\end{align}
Then 
it holds 
for all $n, M\in\N$ that
$
\RN_{ n, M }
\leq d \,(5M)^n
$.
\end{lemma}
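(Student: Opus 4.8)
The plan is to prove, by strong induction on $n \in \N_0$, the (only formally stronger) claim that $\RN_{n,M} \le d\,(5M)^n$ for every $M \in \N$; the assertion of the lemma is then the special case $n \in \N$. The base case $n = 0$ is immediate from the hypothesis $\RN_{0,M} = 0$ together with $d \ge 1$, which gives $\RN_{0,M} = 0 \le d = d\,(5M)^0$.

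For the induction step I would fix $n \in \N$ and assume that $\RN_{k,M} \le d\,(5M)^k$ for all $M \in \N$ and all $k \in \N_0$ with $k < n$. The key point is that each summand factor in \eqref{c16} is controlled by $2d\,(5M)^l$: concretely, I would first establish that
\begin{equation*}
  d + 1 + \RN_{l,M} + \1_{\N}(l)\,\RN_{l-1,M} \le 2 d\,(5M)^l
  \qquad \text{for all } l \in \{0, 1, \ldots, n-1\}.
\end{equation*}
For $l = 0$ this is just $d + 1 \le 2d$ (using $d \ge 1$ and $\1_{\N}(0) = 0$), and for $l \ge 1$ the induction hypothesis reduces it to $d + 1 + d\,(5M)^{l-1} \le d\,(5M)^l$, which follows from $d\,(5M)^l \ge 5\,d\,(5M)^{l-1} \ge d\,(5M)^{l-1} + 4d \ge d\,(5M)^{l-1} + d + 1$ (here the hypotheses $M \ge 1$ and $d \ge 1$ enter). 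Substituting this estimate into \eqref{c16}, factoring out $M^n$ via $M^{n-l}(5M)^l = 5^l M^n$, and summing the resulting geometric series $\sum_{l=0}^{n-1} 5^l = \tfrac{5^n - 1}{4}$ then yields
\begin{equation*}
  \RN_{n,M}
  \le d M^n + \sum_{l=0}^{n-1} M^{n-l}\big(2 d\,(5M)^l\big)
  = d M^n\Big(1 + 2\,\tfrac{5^n - 1}{4}\Big)
  = d M^n\cdot\tfrac{5^n + 1}{2}
  \le d M^n\cdot 5^n
  = d\,(5M)^n ,
\end{equation*}
where the last inequality uses $5^n \ge 1$ (valid since $n \ge 1$). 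This closes the induction and proves the lemma.

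The argument is entirely elementary; the only thing requiring a little care is tracking the constant so as to land exactly on $5^n$ rather than on a larger power (a naive bound with $d+1\le 2d$, $\RN_{l,M}+\RN_{l-1,M}\le 2d(5M)^l$, $\sum_{l<n}5^l\le 5^n/4$, and the leftover term $dM^n$ would give the weaker bound $2d(5M)^n$). The two places where $d \ge 1$ and $M \ge 1$ must be invoked are (i) absorbing the additive $+1$ and the lower-order contribution $\RN_{l-1,M}$ into one extra factor of $5M$, and (ii) passing from $1 + 2\cdot\frac{5^n-1}{4} = \frac{5^n+1}{2}$ to the clean bound $5^n$. The term $\1_{\N}(0)\,\RN_{-1,M}$ is harmless since $\1_{\N}(0) = 0$, so the a priori meaningless value $\RN_{-1,M}$ never enters, and no measurability or integrability considerations play any role.
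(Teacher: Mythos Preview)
Your proof is correct and is in fact cleaner than the paper's. The paper divides through by $M^n$, bounds the resulting sum by a discrete Gronwall-type inequality (citing \cite[Corollary 4.1.2]{agarwal2000difference}), and is forced to split into the two cases $M \ge 2$ and $M = 1$ because the geometric-series estimate $\sum_{l=0}^{n-1} M^{-l} \le (1-M^{-1})^{-1}$ degenerates at $M=1$; each case is then closed with its own Gronwall step and some ad hoc constant-chasing. Your direct strong induction on $n$, with the uniform summand bound $d+1+\RN_{l,M}+\1_{\N}(l)\RN_{l-1,M}\le 2d\,(5M)^l$, handles all $M \ge 1$ at once, avoids the external Gronwall reference, and lands exactly on the target constant via the identity $1+2\cdot\tfrac{5^n-1}{4}=\tfrac{5^n+1}{2}\le 5^n$. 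The trade-off is that your argument is tailored to the specific constant $5$, whereas the Gronwall route in the paper would adapt more mechanically to variants of the recursion; for the lemma as stated, your approach is preferable.
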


\begin{proof}[Proof of \cref{c15}]
First, observe that~\eqref{c16} and the fact that 
for all $ M \in \N$ it holds that 
$\RN_{0,M}=0$ imply that
for all $ n \in \N $, $M \in \N\cap [2,\infty)$ it holds 
that
\begin{equation}  
\begin{split}\label{c17}
  (M^{-n} \RN_{ n,M})
&\leq 
  d+\sum_{l=0}^{n-1}\left[ M^{-l}( d +1 + \RN_{ l, M}+ \1_{ \N }( l )  \RN_{ l-1, M })\right]\\
&\leq 
  d+ (d+1) \left[ \sum_{l=0}^{n-1} M^{-l} \right]  + \left[ \sum_{l=0}^{n-1} M^{-l} \RN_{ l, M} \right] + \left[ \sum_{l=0}^{n-2} M^{-(l+1)} \RN_{ l, M } \right] \\
&=
  d+ (d+1) \tfrac{(1 - M^{-n})}{(1- M^{-1})} + \left[ \sum_{l=0}^{n-1} M^{-l} \RN_{ l, M} \right] +  \frac{1}{M} \left[ \sum_{l=0}^{n-2} M^{-l} \RN_{ l, M } \right] \\
&\leq
  d+ (d+1) \tfrac{1}{(1- \frac{1}{2})} + \left( 1 + \tfrac{1}{M}\right) \left[ \sum_{l=0}^{n-1} M^{-l} \RN_{ l, M} \right] \\
&= 
  3d + 2 + \left( 1 + \tfrac{1}{M}\right) \left[ \sum_{l=1}^{n-1} M^{-l} \RN_{ l, M} \right].
\end{split}     
\end{equation}
The discrete Gronwall-type inequality in \cite[Corollary 4.1.2]{agarwal2000difference} hence ensures that 
for all $ n \in \N $, $M \in \N\cap [2,\infty)$ it holds that
\begin{equation}
 (M^{-n} \RN_{ n,M })
\leq
  (3d+2)\left(2+\tfrac1M\right)^{n-1}.
\end{equation}
This establishes that 
for all $ n \in \N $, $M \in \N\cap [2,\infty)$ it holds that
\begin{equation}
\label{c18}
 \RN_{ n,M }
\leq
  (3d+2)\left(2+\tfrac1M\right)^{n-1}M^n
\leq (5d) 3^{n-1}M^n
\leq d (5M)^n.
\end{equation}
Moreover, observe that the fact that $\RN_{0, 1} = 0$ and \eqref{c16} demonstrate that 
for all $n \in \N$ it holds that
\begin{equation}
  \RN_{n, 1}
\leq
  d
  +
  \sum_{l = 0}^{n-1} (d+1 + \RN_{l, 1} + \mathbbm{1}_{\N}(l)\RN_{l-1, 1})
\leq
  d + n(d+1) + 2 \sum_{l = 1}^{n-1}\RN_{l, 1}.
\end{equation}
Hence, we obtain 
for all $n \in \N$, $k \in \N \cap (0,n]$ that
$
  \RN_{k, 1}
\leq
  d + n(d+1) + 2 \sum_{l = 1}^{k-1}\RN_{l, 1}
$.
Combining this with the discrete Gronwall-type inequality in \cite[Corollary 4.1.2]{agarwal2000difference} proves that 
for all $n \in \N$, $k \in \N \cap (0,n]$ it holds that
$
  \RN_{k, 1}
\leq
  (d + n(d+1)) 3^{k-1}
$.
The fact that 
for all $n \in \N$ it holds that
$(1 + 2n) 3^{n-1} \leq 5^n$
hence
shows that 
for all $n \in \N$ it holds that
\begin{equation}
  \RN_{n, 1}
\leq
  (d + n(d+1)) 3^{n-1}
=
  d \, \left(1 + n\left(1+\tfrac{1}{d}\right)\right) 3^{n-1}
\leq
  d \, (1 + 2n) 3^{n-1}
\leq
  d \, 5^n.
\end{equation}
Combining this with \eqref{c18} completes the proof of \cref{c15}.
\end{proof}

\begin{corollary}
\label{comp_and_error}
Assume \cref{b35}, assume that $f$ and $g$
are at most polynomially growing, and let $\delta \in (0,\infty)$, $C \in (0,\infty]$, $\epsilon \in (0,1]$,
$(\RN_{n,M})_{n,M \in \Z}\subseteq\N_0$ satisfy
for all $n,M \in \N$ that 
\begin{align}
\label{comp_and_error:ass1}
\RN_{0,M}=0,
\qquad
  \RN_{ n,M}
  &\leq d M^n+\sum_{l=0}^{n-1}\left[M^{(n-l)}( d+1 + \RN_{ l, M}+ \1_{ \N }( l )  \RN_{ l-1, M})\right], \qquad \text{and}
\end{align}
\begin{equation}
  C 
=
\max\left\{100, 5 e \left[e^{LT}
  \left[\left(\E\!\left[\left|g(\start + \fwbm_T)\right|^2\right]\right)^{\!\!\nicefrac{1}{2}}+T\left\|\funcF(0)\right\|_1  \right] \right]^{2+\delta} \sup_{n\in \N}\left[
\frac{(n+1)(4+8LT)^{n(2+\delta)}}{n^{((n\delta)/2)}}
 \right]\right\}.
\end{equation}
Then 
\begin{enumerate}[(i)]
\item \label{item:C_finite} it holds that $C<\infty$ and 
\item \label{item:complexity} there exists $N\in \N$ such that $\RN_{N,N}
\leq C
  d 
    \epsilon^{-(2 + \delta)}
  $
  and
$
\sup_{n\in \N\cap [N,\infty)}\left(\E\!\left[|U^0_{n,n}(0,\start)-u(0,\start)|^2\right]\right)^{\!\nicefrac{1}{2}}\le \epsilon
$.
\end{enumerate}
\end{corollary}

\begin{proof}[Proof of Corollary~\ref{comp_and_error}]
Throughout this proof let $c \in (0,\infty]$, $\kappa \in (0,\infty)$ satisfy
\begin{equation}
  c
=
   e^{LT}
  \left[\left(\E\!\left[\left|g(\start + \fwbm_T)\right|^2\right]\right)^{\!\!\nicefrac{1}{2}}+T\left\|\funcF(0)\right\|_1  \right]
\qquad \text{and} \qquad 
  \kappa 
=
  \sqrt{e}(1+2LT).
\end{equation}
Note that the fact that for all 
$ p \in (0,\infty) $ it holds that
$
  \E\big[
    \sup\nolimits_{ t \in [0,T] }
    \| W^0_t \|^p_{ \R^d }
  \big]
  < \infty
$
and the hypothesis that 
$ f \colon [0,T] \times \R^d \times \R \to \R $
and 
$ g \colon \R^d \to \R $
are at most polynomially growing functions 
ensure that $c<\infty$. This establishes Item~\eqref{item:C_finite}. 
Next observe that Theorem~\ref{b45} (with $N \dashleftarrow n$, $M \dashleftarrow n$ in the notation of Theorem~\ref{b45}) ensures that
for all $n\in \N$ it holds that
\begin{equation}
   \left(\E\!\left[|U^0_{n,n}(0,\start)-u(0,\start)|^2\right]\right)^{\!\!\nicefrac{1}{2}}
\leq
  \frac{c \, e^{n/2}(1+2LT)^{n}}{n^{n/2}}
=
  \frac{c \kappa^n}{n^{\nicefrac{n}{2}}}.
\end{equation}
This proves that $\limsup_{n\to \infty}\left(\E\!\left[|U^0_{n,n}(0,\start)-u(0,\start)|^2\right]\right)^{\!\nicefrac{1}{2}}=0$. Next let $N\in \N\cap [2,\infty)$ satisfy 
\begin{equation}\label{eq:def_N}
N=\min\left\{n\in \N\cap [2,\infty): \sup_{m\in \N\cap [n,\infty)}\left(\E\!\left[|U^0_{n,n}(0,\start)-u(0,\start)|^2\right]\right)^{\!\nicefrac{1}{2}}\le \varepsilon\right\}.
\end{equation}
Note that \eqref{eq:def_N} implies that
\begin{equation}\label{eq:ub_eps}
\varepsilon \le \1_{\{2\}}(N)+\left(\E\!\left[|U^0_{n,n}(0,\start)-u(0,\start)|^2\right]\right)^{\!\nicefrac{1}{2}}\1_{[3,\infty)}(N)
\le \1_{\{2\}}(N)+ \frac{c \kappa^{N-1}\1_{[3,\infty)}(N)}{(N-1)^{\nicefrac{(N-1)}{2}}}.
\end{equation}
Next note that the fact that $\sup_{n\in \N}[(1+\frac{1}{n})^n]=e$ ensures that
\begin{equation}
\frac{N^N}{(N-1)^{N-1}}=\frac{N^{N-1}N}{(N-1)^{N-1}}=N\left(1+\frac{1}{N-1}\right)^{N-1}\le Ne.
\end{equation}
Lemma~\ref{c15} and \cref{eq:ub_eps} hence demonstrate that
\begin{equation}
\label{c19}
\begin{split}
  \RN_{N,N}
&\leq
  d (5N)^N
=
  d (5N)^N \varepsilon^{2+\delta} \varepsilon^{-(2+\delta)}
\leq
d \varepsilon^{-(2+\delta)} 
\left(100\, \1_{\{2\}}(N)+ (5N)^N\left(\frac{c \kappa^{N-1}\1_{[3,\infty)}(N)}{(N-1)^{\nicefrac{(N-1)}{2}}}\right)^{2+\delta}\right)\\
&
\leq
d \varepsilon^{-(2+\delta)} 
\max\left\{100,(5N)^N\left(\frac{c \kappa^{N-1}}{(N-1)^{\nicefrac{(N-1)}{2}}}\right)^{2+\delta}\right\}\\
&\leq d \varepsilon^{-(2+\delta)} 
\max\left\{100, \frac{c^{2+\delta}5^N\kappa^{(N-1)(2+\delta)}N^N}{(N-1)^{N-1}(N-1)^{\frac{\delta(N-1)}{2}}}
\right\}
\leq d \varepsilon^{-(2+\delta)} 
\max\left\{100, \frac{c^{2+\delta}eN5^N\kappa^{(N-1)(2+\delta)}}{(N-1)^{\frac{\delta(N-1)}{2}}}
\right\}
\\
&\leq d \varepsilon^{-(2+\delta)} 
\max\left\{100, 5 e c^{2+\delta} \sup_{n\in \N}\left[
\frac{(n+1)5^n\kappa^{n(2+\delta)}}{n^{((n\delta)/2)}}
 \right]\right\}.
\end{split}
\end{equation}
In addition, observe that the fact that $\sqrt{5e} \leq 4$ assures that
\begin{equation}
  5 \kappa^{(2+\delta)}
\leq
  ( \sqrt{5 e} (1+2LT) )^{(2+\delta)}
\leq
  ( 4 (1+2LT) )^{(2+\delta)}
=
  ( 4 +8LT )^{(2+\delta)}.
\end{equation}
This and \eqref{c19} prove that
\begin{equation}
  \RN_{N,N}
\leq
  d \varepsilon^{-(2+\delta)} 
\max\left\{100, 5 e c^{2+\delta} \sup_{n\in \N}\left[
\frac{(n+1)(4+8LT)^{n(2+\delta)}}{n^{((n\delta)/2)}}
 \right]\right\}
=
  Cd \varepsilon^{-(2+\delta)}.
\end{equation}
This establishes Item~\eqref{item:complexity}.
The proof of Corollary~\ref{comp_and_error} is thus completed.
\end{proof}

\begin{theorem}
\label{thm1}
Let $d \in \N$, $L, T, \delta \in (0,\infty)$, $\varepsilon \in (0,1]$, $C \in (0, \infty]$, $\xi \in \R^d$, $\Theta = \cup_{n = 1}^\infty \Z^n$, 
let 
$f \in C( [0,T] \times \R^d \times \R , \R)$,
$g  \in C (\R^d , \R)$,
$u \in C( [0,T] \times \R^d , \R)$
be at most polynomially growing functions,
let $ ( \Omega, \mathcal{F}, \P ) $ be a probability space, 
let $W^{\theta} \colon [0,T] \times \Omega \to \R^d$, $\theta \in \Theta$, be independent standard Brownian motions with continuous sample paths,
let $\unif^\theta\colon \Omega\to[0,1]$, $\theta\in \Theta$, be independent $\mathcal{U}_{[0,1]}$-distributed random variables, 
assume that $(W^\theta)_{\theta \in \Theta}$ and $(\unif^\theta)_{\theta \in \Theta}$ are independent,
assume 
for all $t \in [0, T]$, $x \in \R^d$, $v, w \in \R$ that
\begin{equation}
\label{eq:fixed_point_cost_thm}
  u(t,x)
= 
  \Exp{
    g(x+W^0_{T-t}) 
    +
    \textstyle \int_t^{T} \!
      f \big(s,x+W^0_{s-t}, u(s,x+W^0_{s-t}) \big)
    \,ds
  },
\end{equation}
\begin{multline}
  C  
= 
\max\Bigg\{100, 5 e 
 \left[
    e^{LT} \!
    \left(\left(\E \big[|g(\start + W^0_T)|^2 \big]\right)^{\!\!\nicefrac{1}{2}}
    +
    \sqrt{T} \,
    \big| \! \textstyle\int_{0}^T \displaystyle \Exp{ | f( s , \xi + W^0_s, 0 ) |^2 }  ds  \big|^{ \nicefrac{1}{2}}
    \right)
  \right]^{2 + \delta}
  \\
  \sup_{n\in \N}\left[
\frac{(n+1)(4+8LT)^{n(2+\delta)}}{n^{((n\delta)/2)}}
 \right]\Bigg\},
\end{multline}
and
$
  |f(t, x, v) - f(t, x, w)|
\leq
  L | v - w |
$,
let $\uniform^\theta\colon [0,T]\times \Omega\to [0, T]$, $\theta\in\Theta$, 
and
$ 
  {U}_{ n, M}^{\theta} \colon [0,T] \times \R^d \times \Omega \to \R
$, $n, M \in\Z$, $\theta\in\Theta$,
satisfy 
for all $ n, M \in \N$, $\theta\in\Theta $, 
$ t \in [0,T]$, $x\in\R^d $
that  
$\uniform^\theta _t = t+ (T-t)\unif^\theta$,
$U_{-1, M}^{\theta}(t,x)=
U_{0, M}^{\theta}(t,x)=0$, and 
\begin{equation}
\begin{split}
\label{thm1:ass2}
    &U_{n,M}^{\theta}(t,x)
  =
  \Bigg[
  \sum_{l=0}^{n-1}\tfrac{(T-t)}{M^{n-l}}\sum_{i=1}^{M^{n-l}}
  f \Big(
    \uniform_t^{(\theta,l,i)},
    x+W_{\uniform_t^{(\theta,l,i)} - t}^{(\theta,l,i)}, 
    U_{l,M}^{(\theta,l,i)}(\uniform_t^{(\theta,l,i)},x+W_{\uniform_t^{(\theta,l,i)} - t}^{(\theta,l,i)} )
  \Big) \\
  &-
  \1_{\N}(l)
  f \Big(
    \uniform_t^{(\theta,l,i)},
    x+W_{\uniform_t^{(\theta,l,i)} - t}^{(\theta,l,i)}, 
    U_{l-1,M}^{(\theta,-l,i)}(\uniform_t^{(\theta,l,i)},x+W_{\uniform_t^{(\theta,l,i)} - t}^{(\theta,l,i)})
  \Big)
  \Bigg]
  +
  \sum_{i=1}^{M^n} \frac{g ( x+W^{(\theta,0,-i)}_{T-t} )}{M^n},
\end{split}
\end{equation}
and let
$(\RN_{n, M})_{n, M \in\N_0 }\subseteq\N_0$ satisfy
for all $n, M\in \N$ that 
$\RN_{0, M}=0$
and 
\begin{align}
\label{thm1:ass1}
  \RN_{ n, M}
  &\leq d M^n+\sum_{l=0}^{n-1}\left[M^{(n-l)}( d+1 + \RN_{ l, M}+ \1_{ \N }( l )  \RN_{ l-1, M})\right].
\end{align} 
Then
\begin{enumerate}[(i)]
\item \label{thm1:item2}
it holds that $C < \infty$
and

\item \label{thm1:item3}
there exists $N\in \N$ such that $\RN_{N,N}
\leq
  C d
    \epsilon^{-(2 + \delta)}
  $
  and
$
\sup_{n\in \N\cap [N,\infty)}\left(\E\!\left[|U^0_{n,n}(0,\start)-u(0,\start)|^2\right]\right)^{\!\nicefrac{1}{2}}\le \epsilon
$.

\end{enumerate}
\end{theorem}

\begin{proof}[Proof of Theorem~\ref{thm1}]
Throughout this proof let $ F \colon C( [0,T] \times \R^d, \R ) \to C( [0,T] \times \R^d , \R ) $ 
satisfy
for all 
$ v \in C( [0,T] \times \R^d, \R ) $,
$ t \in [0,T] $, $ x \in \R^d $ that
\begin{equation}
  ( F( v ) )( t, x )
  =
  f( t, x, v(t,x) )
  .
\end{equation}
Observe that the hypothesis that for all 
$ t \in [0,T] $, $ x \in \R^d $, $ v, w \in \R $
it holds that
$
  | f( t, x, v ) - f( t, x, w ) |
  \leq 
  L | v - w |
$
ensures that
for all $ v, w \in C([0, T] \times \R^d, \R)$, $t\in [0,T]$, 
$x\in  \R^d$ 
it holds that 
\begin{equation}
\label{eq:verify_Lipschitz}
\begin{split}
    |
      (F(v))(t,x) -(F(w))(t,x)
    |
  = 
    | 
      f( t, x, v(t,x) ) - f( t, x, w(t,x) )
    |
  \leq
    \LipConst
    \left| v(t,x) - w(t,x) \right|
  .
\end{split}     
\end{equation}
Moreover, note that 
the hypothesis that 
$ f \colon [0,T] \times \R^d \times \R \to \R $, 
$ g \colon \R^d \to \R $,
and 
$
  u \colon [0,T] \times \R^d \to \R 
$
are at most polynomially growing functions 
and the fact that for all 
$ p \in (0,\infty) $ it holds that
$
  \E\big[
    \sup\nolimits_{ t \in [0,T] }
    \| W^0_t \|^p_{ \R^d }
  \big]
  < \infty
$
demonstrate that
\begin{equation}
\label{eq:verify_polynomial_growth}
\begin{split}
  &\E\big[
    | g( x + W^0_t ) |
  \big]
+
  \int_0^T 
    \big(
      \E\big[
        | u(s,\xi+ W^0_s ) |^2
      \big]
    \big)^{\!\nicefrac{1}{2}}
  \,ds \\
&
  +
  \int_t^T 
  \E\big[
    |
      (\funcF(u))(s,x+W^0_{ s - t } )
    |
    +
    |
      (\funcF(0))(s, x + W^0_{ s - t  } )
    |
  \big] \, ds
  < \infty
  .
\end{split}
\end{equation}
Combining this and \cref{eq:verify_Lipschitz} 
with \cref{comp_and_error}
establishes Item~\eqref{thm1:item2}. 
In addition, observe that
\cref{eq:verify_Lipschitz},
\cref{eq:verify_polynomial_growth}, 
and 
\cref{comp_and_error}
establish Item~\eqref{thm1:item3}. 
The proof of Theorem~\ref{thm1} is thus completed.
\end{proof}

\subsubsection*{Authors' contributions}
All authors made substantial contributions to the conception and the design of this work and all authors also substantially contributed to the drafting and the revisions of this work. Moreover, each of the authors gave his final approval for the publication of the final version of this article. In addition, each of the authors agrees to be accountable for all aspects of this work in ensuring that questions related to the accuracy or the integrity of any part of this work are appropriately investigated and resolved. Furthermore, all authors confirm that there is no other person who substantially contributed to this work.

\subsubsection*{Acknowledgements}
This work has been funded by the Deutsche Forschungsgemeinschaft (DFG, German Research Foundation) under Germany’s Excellence Strategy EXC 2044-390685587, Mathematics M\"unster:  Dynamics-Geometry-Structure and through the research grant HU1889/6-1. Christian Beck, Ramon Braunwarth, and Emilia Magnani are gratefully acknowledged for bringing a few typos into our notice.

{\small
\bibliographystyle{acm}
\bibliography{bibfile}
}

\end{document}